\journalname{JOTA}
\begin{document}

\title{An inexact proximal DC algorithm with sieving strategy for rank constrained least squares semidefinite programming 
}

\titlerunning{s-iPDCA for RCLSSDP}        


\author{Mingcai Ding\textsuperscript{1}        \and
        Xiaoliang Song\textsuperscript{1} \and Bo Yu\textsuperscript{1} 
}


\institute{Mingcai Ding \at 
                        \email{Dingmc$\_$dlut@mail.dlut.edu.cn}
              \and
                  Xiaoliang Song(\Letter)\\ 
              \email{Songxiaoliang@dlut.edu.cn}
              \at The research of this author was supported by "the Fundamental Research Funds for the Central Universities (DUT20RC(3)079)"
              \and
               Bo Yu\\\email{Yubo@dlut.edu.cn} 
               \at The research of this author was supported by "National Natural Science Foundation of China" (NO.11971092)
           \and
            \textsuperscript{1}Shcool of Mathematical Sciences, Dalian University of Technology, Dalian, LiaoNing, People’s Republic of China
}

\date{Received: date / Accepted: date}

\maketitle

\begin{abstract}
In this paper, the optimization problem of the supervised distance preserving projection (SDPP) for data dimension reduction (DR) is considered, which is equivalent to a rank constrained least squares  semidefinite programming (RCLSSDP). In order to overcome the difficulties caused by rank constraint, the difference-of-convex (DC) regularization strategy was employed, then the RCLSSDP is transferred into a series of least squares semidefinite programming with DC regularization (DCLSSDP). An inexact proximal DC algorithm with sieving strategy (s-iPDCA) is proposed for solving the DCLSSDP, whose subproblems are solved by the accelerated block coordinate descent (ABCD) method. Convergence analysis shows that the generated sequence of s-iPDCA globally converges to  stationary points of the corresponding DC problem. To show the efficiency of our proposed algorithm for solving the RCLSSDP,  the s-iPDCA is compared with classical proximal DC algorithm (PDCA) and the PDCA with extrapolation (PDCAe) by performing DR experiment on the COIL-20 database, the results show that the s-iPDCA  outperforms the PDCA and the PDCAe in solving efficiency. Moreover, DR experiments for face recognition on  the ORL database and the YaleB database demonstrate that the rank constrained kernel SDPP (RCKSDPP) is  effective and competitive by comparing the recognition accuracy with kernel semidefinite SDPP (KSSDPP) and kernal principal component analysis (KPCA).
\keywords{Supervised distance preserving projection \and Rank constraint   \and Inexact proximal DC algorithm \and Sieving strategy \and    Acclerated block coordinate desent \and Data dimension reduction}
\end{abstract}

\section{Introduction}\label{intro}
Dimensionality reduction (DR) plays a crucial role in handling high-dimensional data. The supervised distance preserving projection (SDPP) is proposed by  Zhu et al. \cite{Ref_Zhu2013Supervised} for data dimension reduction (DR), which minimizes the difference of local structure between projected input covariates and their corresponding responses. Given sample points $\{\boldsymbol{x}_1, \boldsymbol{x}_2, \cdots,\boldsymbol{x}_n\}\subset\mathcal{X}\subset\mathcal{R}^d$ and their corresponding responses $\{\boldsymbol{y}_1,\boldsymbol{y}_2, \cdots, \boldsymbol{y}_n\} \subset \mathcal{Y} \subset\mathcal{R}^m$, the form of the SDPP is as following:
\begin{equation}\label{eq_1}
\begin{array}{ll}
\min & J(\mathbf{P})=\frac{1}{n} \sum_{i=1}^{n} \sum_{\boldsymbol{x}_{j} \in \mathcal{N}\left(\boldsymbol{x}_{i}\right)}\left( \|\mathbf{P}^T\boldsymbol{x}_{i} - \mathbf{P}^T\boldsymbol{x}_{j}\|^2-\left\|\boldsymbol{y}_{i}-\boldsymbol{y}_{j}\right\|^{2}\right)^{2}. 
\end{array}
\end{equation}
In here, $\mathcal{N}\left(\boldsymbol{x}_{i}\right)$ is a neighborhood of $\boldsymbol{x}_{i}$. $\mathbf{P} \in \mathcal{R}^{d\times r} \left(d >> r\right)$ denotes the protection matrix. $\|\mathbf{P}^T\boldsymbol{x}_{i} - \mathbf{P}^T\boldsymbol{x}_{j}\|$ and $\left\|\boldsymbol{y}_{i}-\boldsymbol{y}_{j}\right\|$  are the pairwise distances among the projected input covariates and distances among responses, respectively.  Zhu et al. \cite{Ref_Zhu2013Supervised} reformulated the SDPP into a semidefinite quadratic linear programming (SQLP) when $d$ is small $(d\leq 100)$. Jahan \cite{Ref_jahan2018dimension} incorporated the total variance of projected covariates to the objective function of the SDPP, and transferred the obtained optimization problem into semidefinite least squares
SDPP (SLS-SDPP). In fact, the hidden low rank constraint has been ignored in the above two converted SDP problems, which may lead to suboptimal projection matrix and poor DR performance. In this manuscript, we show that the optimization problem of the SDPP \eqref{eq_1} can be converted equivalently into a rank constrained least squares semidefinite programming (RCLSSDP). The RCLSSDP also occurs in many other contexts such as nearest correlation matrix(NCM) \cite{Ref_gao2010structured,Ref_gao2010majorized,Ref_qi2014computing}, sensor network localization(SNL) \cite{Ref_singer2008remark} and classical multidimensional scaling (MDS) \cite{Ref_shang2004localization,Ref_1952Multidimensional}.

As we known, the rank constrained matrix optimization problems are computationally intractable \cite{Ref_buss1999computational} and NP hard because they are unconvex. The convex relaxation technique has been applied to remove the unconvexity of directly solving the rank constrained least squares problem (RCLS) \cite{Ref_candes2011tight,Ref_recht2010guaranteed}, one of the most popular convex relaxation is the nuclear norm minimization (NNM): 
\begin{equation}\label{eq_2}
\begin{array}{ll}
\min &\|\mathcal{A}\left(\mathbf{U}\right)- \boldsymbol{b}\|^{2}+\lambda\|\mathbf{U}\|_*\\
\text { s.t. } & \mathbf{U} \in \mathcal{R}^{p_1\times p_2}.
\end{array}
\end{equation}
where $\|\mathbf{U}\|_* = \Sigma_{i=1}^{\min{\left(p_1,p_1\right)}} \sigma_i(\mathbf{U})$ is the nuclear norm of $\mathbf{U}$ and $\lambda$  is a tuning parameter. Some efficient algorthms have been proposed to solve NNM, such as proximal gradient descent \cite{Ref_toh2010accelerated} and proximal point methods \cite{Ref_jiang2014partial}. It has been shown that the solution of the NNM has desirable properties under proper assumptions.  Another widely considered convex relaxation for the rank constrained optimization is called the max norm minimization (MNM) \cite{Ref_cai2014sparse,Ref_lee2010practical}, which use max norm as regularizer. However, these convex relaxation teachniques may expand the parameter space of target problem, which would keep the solution of convex relaxation problem away from that of target problem. What's more, the penalty parameter  need to be carefully tuned to ensure that the optimal solution of penalty problem satisfies low rank constraint and its related the fitting term is  small enough.

In addition to convex relaxation methods,  a class of non-convex optimization algorithms for solving rank constrained least squares has also been widely investgated \cite{Ref_chen2015fast,Ref_jain2013low,Ref_luo2020recursive}, which directly enforce $\operatorname{rank}(\mathbf{U})=r$ on the iterates. In these algorithms, the  low-rank matrix $\mathbf{U}$ is first factored to $\mathbf{R}\mathbf{L}^{\top}$ with two factor matrices $\mathbf{R}\in \mathcal{R}^{p_1\times r}$ and $\mathbf{L}\in \mathcal{R}^{p_2\times r}$, then alternately run either gradient decent or  minimization on $\mathbf{R}\in \mathcal{R}^{p_1\times r}$ and $\mathbf{L}\in \mathcal{R}^{p_2\times r}$ \cite{Ref_candes2015phase,Ref_li2019rapid,Ref_zheng2015convergent}. Based on this framework, some  methods perform sketching to speed up the computation via dimension reduction has been explored extensively in recent years \cite{Ref_luo2020recursive,Ref_mahoney2010randomized}. Since this class of method performs computation and storage the iterates in factored form, it is more efficient than the NNM both in computation  and storage, especially in cases where the rank $r$ is very small compared to $p_1$ and $p_2$. \par
The another popular unconvex method to directly deal with the rank constraint is that the rank constraint $\operatorname{rank}(\mathbf{U})\leq r$ is equivalently converted into the equality constraint $\|\mathbf{U}\|_*- \|\mathbf{U}\|_{(r)} = 0$ with  nuclear norm function $\|\mathbf{U}\|_*$ and Ky-Fan r-norm function $\|\mathbf{U}\|_{(r)}$. Then the exact penalty approach is used to penalize the euqality constraint into objective with chosen penalty parameter. Since the nuclear norm function $\|\mathbf{U}\|_*$ and the Ky-Fan r-norm function $\|\mathbf{U}\|_{(r)}$ are both convex, so the rank constrained optimization problem is converted into difference-of-convex (DC) probelm, which can be solved under the DC approach framework \cite{Ref_tao1997convex}. Then the rank constrained semidefinite programming (RCSDP) can  be reformulated as a DC problem \cite{Ref_gao2010structured,Ref_gao2010majorized}, so the classical DC algorithm is  used to solve the DC formulation of the RCSDP. In each iteration of the classical DC algorithm,  the concave part of the objective function is replaced by its  linearization at the chosen point, then  the resulting convex optimization problem is solved efficiently by some state-of-the-art solvers. In recent decades, massive algorithms for solving DC models have been proposed, including the classical DC algorithm \cite{Ref_an2005dc,Ref_le1999Exact,Ref_le2012exact,Ref_tao1997convex}, the proximal DC algorithm \cite{Ref_souza2016global}, the DC alorithm with proximal bundle \cite{Ref_gaudioso2018minimizing,Ref_de2019proximal} and the proximal DC algorithm with extrapolation \cite{Ref_liu2019refined,Ref_wen2018proximal}, etc.   In the literature \cite{Ref_jiang2021proximal}, a semi-proximal DC algorithm (SPDCA) was proposed to solve the rank constrained SDP and the large scale subproblem was solved by an efficient majorized semismooth Newton-CG augmented Lagrangian method based on the software package SDPNAL+ \cite{Ref_sun2020sdpnal+}. This technique performs very well in massive unconvex problems, e.g. quadratic assignment problem (QAP), standard quadratic programming and  minimum-cut graph tri-partitioning problem. However, they solve the subprobelm exactly, which will waste massive computation and reduce the solving efficiency.
\par
As all we known, the calculation of DC algorithm mainly focuses on solving convex subproblems. For large scale DC problem, we need huge amount of computation when solving convex subproblems to higher precision. In fact, it is time-consuming and unnecessary to solve the  convex subproblems exactly at each iterate of general DC algorithm, specially at the initial iteration of the DCA.   To overcome this issue, an inexact  proximal DC algorithm (iPDCA) is proposed \cite{Ref_souza2016global}, but the inexact strategy in their algorithm  is difficult to implement in practical application. In fact, it is a challenging task to design an inexact proximal DC algorithm that can guarantee the theoretical convergence and good numerical performance for solving large scale DCLSSDP. As far as we know, there is no research in this field.

The main contributions of this paper can be divided into the following  four parts. First of all, we propose a numerical efficient inexact proximal DC algorithm with sieving strategy (s-iPDCA) for solving the RCLSSDP, and the sequence generated by proposed algorithm globally converges to any stationary point of the corresponding DC problem. Secondly,  for the subprobelm of the s-iPDCA, we design an very effective accelerated block coordinate descent (ABCD) method to solve its strongly convex dual probelm.  Thirdly, an operable and numerically simple inexact strategy is used to solve subprobelm of s-iPDCA efficiently.  Finally, we compare our s-iPDCA with the classical PDCA and the PDCA with extrapolation (PDCAe) for solvig the RCLSSDP from DR experiment on the COIL-20 database, the results demonstrate that our proposed s-iPDCA outperforms the other two algorithms.  We also perform face recognition experiments on standard face recognition databases, the ORL and YaleB, the resluts indicate the new RCKSDPP model is very effective to reduce the dimension  of face images data with complex distribution.

Below are some common notations to be used in this paper. We use $\mathcal{S}^q$ to denote the linear subspace of all $q\times q$ real symmetric matrices and use $\mathcal{S}^q_+\backslash\mathcal{S}^q_-$ to denote the positive$\backslash$negative semidefinite  matrix cone. Denotes $\|\cdot\|_F$  the Frobenius norm of matrices. $\|\cdot\|$ is used to represent the $l_2$ norm of vectors and matrices. Let $\boldsymbol{e}_i$ be the $i^{th}$ standard unit vector. Given
an index set $\mathcal{L} \subset \left\{1, \cdots, q\right\}$, $|\mathcal{L}|$ denotes the size of $\mathcal{L}$. We denote the vector and square matrix of all ones by $\boldsymbol{1}_q$ and $\mathbf{E}_q$ respectively, and denote the identity matrix by $\mathbf{I}_q$. We use “$\operatorname{vec}(\cdot)$” to denote the vectorization of matrices. If $\boldsymbol{z} \in \mathcal{R}^p $, then Diag($\boldsymbol{z}$) is a $p \times p$ diagonal matrix with $\boldsymbol{z}$ on the main diagonal. We denote the Ky-Fan k-norm of matrix $\mathbf{U}$ as $\|\mathbf{U}\|_{(k)} = \Sigma_{i=1}^k \sigma_i(\mathbf{U})$, in which, $\sigma_i(\mathbf{U})$ is the $i^{th}$ largest singular value of $\mathbf{U}$. $\langle\mathbf{U},\mathbf{A}\rangle = \Sigma_{i,j=1}^q\mathbf{U}_{i j}\mathbf{A}_{i j}$ is used to denote the inner product between square matrix $\mathbf{U}$ and square matrix $\mathbf{A}$.  Let $\lambda_1\geq\lambda_2\geq\cdots\geq\lambda_q$ be the eigenvalues of $\mathbf{U}\in \mathcal{S}^q$ being arranged in nonincreasing order. We denote $\mu:=\left\{i|\lambda_i>0,i = 1,...,q\right\}$ and  $\nu:=\left\{i|\lambda_i<0,i = 1,...,q\right\}$ as the index set of positive eigenvalues and the index set of negative eigenvalues, respectively. The spectral decomposition of $\mathbf{U}$ is given as $\mathbf{U} = \mathbf{Q}\mathbf{\Lambda}\mathbf{Q}^{\top}$ with \[\mathbf{\Lambda} 
= \left[\begin{array}{ccc}
\mathbf{\Lambda_{\mu}} & 0 & 0\\
0 & 0 & 0\\
0 & 0 & \mathbf{\Lambda_{\nu}},
\end{array}\right]\]
The semidefinite positive$\backslash$negative matrix cone projection of $\mathbf{U}$ is represented as \[\operatorname{\Pi}_{\mathcal{S}_+^q}\left(\mathbf{U}\right)=\mathbf{Q}_{\mu}\mathbf{\Lambda}_{\mu}\mathbf{Q}_{\mu}^{\top}\backslash\operatorname{\Pi}_{\mathcal{S}_-^q}\left(\mathbf{U}\right)=\mathbf{Q}_{\nu}\mathbf{\Lambda}_{\nu}\mathbf{Q}_{\nu}^{\top}.\]

\section{Rank constrained supervised distance preserving projection}
\label{sec:2}
Let $\mathbf{U} = \mathbf{P}\mathbf{P}^T$, $\boldsymbol{\tau}_{i j} = \boldsymbol{x}_i - \boldsymbol{x}_j$ and $\iota_{i j} = \boldsymbol{y}_i - \boldsymbol{y}_j$, the SDPP \eqref{eq_1} can be formulated as
\begin{equation}\label{eq_3}
\begin{array}{ll}
\min_{\mathbf{U}\in \mathcal{S}_+^d} J(\mathbf{U})=\frac{1}{n} \sum_{i ,j=1}^n \mathbf{G}_{i j} \left(\left\langle\boldsymbol{\tau}_{i j} \boldsymbol{\tau}_{i j}^{\top},\mathbf{U}\right\rangle-\iota_{i j}^{2}\right)^{2},
\end{array}
\end{equation} 
where $\mathbf{G}$ denotes a graph matrix, whose element is shown as
\begin{equation}\nonumber
\mathbf{G}_{i j} = \left\{\begin{array}{l}
1\quad \text{ if } \boldsymbol{x}_j\in\mathcal{N}(\boldsymbol{x}_i) \\
0\quad \text{ otherwise }
\end{array}\right.
\end{equation} 
Let $p = \Sigma_{i,j=1}^n\mathbf{G}_{ i j }$ and rearrange the items in \eqref{eq_3} with $\mathbf{G}_{i j}>0$  into one cloumn, we  obtain a least squares semidefinite programming (LSSDP)
\begin{equation}\label{eq_4}
\begin{array}{l}
\min_{\mathbf{U} \in \mathcal{S}^d_+} J(\mathbf{U}) =\frac{1}{n} \|\mathcal{A}\left(\mathbf{U}\right)- \boldsymbol{b}\|^{2},
\end{array}
\end{equation}
in which, $\boldsymbol{b} = \left[\iota_{(1)}^2,\iota_{(2)}^2,...,\iota_{(p)}^2\right]^{\top}$, $\mathcal{A}:\mathcal{S}^d_+\rightarrow \mathcal{R}^p$ is a linear opterator that can be explicitly represented as
\begin{equation}\label{eq_5}
\mathcal{A}\left(\mathbf{U}\right) = \left[\langle \mathbf{A}_1,\mathbf{U}\rangle,\langle \mathbf{A}_2,\mathbf{U}\rangle,...,\langle \mathbf{A}_p,\mathbf{U}\rangle\right]^{\top}, \mathbf{A}_i = \tau_{(i)}\tau_{(i)}^{\top}, i = 1,2,...,p.
\end{equation}
   To distinguish the DR model  expressed in \eqref{eq_4} with the orginal SDPP\eqref{eq_1}, we call it semidefinite SDPP (SSDPP) when we use it to reduce the dimension of data. In fact, the LSSDP in \eqref{eq_4} is the convex relaxation of the optimization problem given in \eqref{eq_1}, but it may not satisfy the low rank constraint, i.e., $\operatorname{rank}\left(\mathbf{U}\right)\leq r,r\ll d$. Therefore, the low rank constraint should be added to the LSSDP in \eqref{eq_4}, given as
\begin{equation}\label{eq_6}
\begin{array}{ll}
\min_{\mathbf{U} \in \mathcal{S}^d_+} &J(\mathbf{U}) = \frac{1}{n}\|\mathcal{A}\left(\mathbf{U}\right)- \boldsymbol{b}\|^{2}\\
\text { s.t. }&\operatorname{rank}\left(\mathbf{U}\right)\leq r.
\end{array}
\end{equation}
Then, we obtain a rank constrained least squares semidefinite programming (RCLSSDP), which is equivalent to the optimization problem of the orginal SDPP \eqref{eq_1}. When apply this model to DR, we call it  rank constrained SDPP (RCSDPP).

As we known, it is difficult to solve the rank constrained optimization problem. By the observation that $\operatorname{rank}\left(\mathbf{U}\right)\leq r$ if and only if $\|\mathbf{U}\|_*-\|\mathbf{U}\|_{(r)} =  0$ and $\forall  \mathbf{U}\in \mathcal{S}^d_+$,  $\|\mathbf{U}\|_* = \langle \mathbf{U},\mathbf{I}\rangle$, the RCLSSDP can be reformulated as the equivalent form
 \begin{equation}\label{eq_7}
\begin{array}{ll}
\min_{\mathbf{U} \in \mathcal{S}^d_+} &J(\mathbf{U}) = \|\mathcal{A}\left(\mathbf{U}\right)- \boldsymbol{b}\|^{2}\\
\text { s.t. }&\langle \mathbf{U},\mathbf{I}\rangle-\|\mathbf{U}\|_{(r)} =  0.
\end{array}
\end{equation}
Even if the problem \eqref{eq_6} is converted to the above form, the difficulty caused by rank constraint is not eliminated. To address this problem, we employ an exact penalty approach.
By penalizing the equality constraint in \eqref{eq_7} into the objective function, we obtain a LSSDP with DC regularization term (DCLSSDP), shown as
 \begin{equation}\label{eq_8}
\min_{\mathbf{U} \in \mathcal{S}^d_+} J_c(\mathbf{U}) = \frac{1}{n}\|\mathcal{A}\left(\mathbf{U}\right)- \boldsymbol{b}\|^{2}+ c\left(\langle \mathbf{U},\mathbf{I}\rangle-\|\mathbf{U}\|_{(r)}\right).
\end{equation}
For the penalty problem \eqref{eq_8}, we have the following conclusion.
\begin{proposition}\label{prop_1}
Let $\mathbf{U}_c^*$ be a global optimal solution to the penalized problem \eqref{eq_8}. If $\operatorname{rank}\left(\mathbf{U}_c^*\right)\leq r$, then $\mathbf{U}_c^*$ is an optimal solution of \eqref{eq_7}.
\end{proposition}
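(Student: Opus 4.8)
The plan is to exploit the exact-penalty structure of \eqref{eq_8}: the regularizer acts as a penalty that is nonnegative on $\mathcal{S}^d_+$ and vanishes precisely on the feasible set of \eqref{eq_7}, so that on that feasible set the two objectives coincide. Once this is set up, a global minimizer of \eqref{eq_8} that incurs zero penalty must solve \eqref{eq_7}.

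First I would record the two properties of the penalty term $P(\mathbf{U}) := \langle \mathbf{U},\mathbf{I}\rangle-\|\mathbf{U}\|_{(r)}$ that drive the argument. For $\mathbf{U}\in\mathcal{S}^d_+$ the eigenvalues coincide with the singular values and are nonnegative, so $\langle\mathbf{U},\mathbf{I}\rangle=\|\mathbf{U}\|_*=\sum_{i=1}^d\sigma_i(\mathbf{U})$ while $\|\mathbf{U}\|_{(r)}=\sum_{i=1}^r\sigma_i(\mathbf{U})$; hence $P(\mathbf{U})=\sum_{i=r+1}^d\sigma_i(\mathbf{U})\ge 0$, with equality if and only if $\sigma_{r+1}(\mathbf{U})=\cdots=\sigma_d(\mathbf{U})=0$, i.e. $\operatorname{rank}(\mathbf{U})\le r$. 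This is exactly the equivalence already used to pass from \eqref{eq_6} to \eqref{eq_7}, so feasibility for \eqref{eq_7} is the same as $P(\mathbf{U})=0$.

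Next I would use the hypothesis. Since $\operatorname{rank}(\mathbf{U}_c^*)\le r$ we have $P(\mathbf{U}_c^*)=0$, so $\mathbf{U}_c^*$ is feasible for \eqref{eq_7}. To show it is optimal, take any feasible $\mathbf{U}\in\mathcal{S}^d_+$ of \eqref{eq_7}, so $P(\mathbf{U})=0$ as well. Global optimality of $\mathbf{U}_c^*$ for \eqref{eq_8} gives
\begin{equation*}
\tfrac{1}{n}\|\mathcal{A}(\mathbf{U}_c^*)-\boldsymbol{b}\|^2 + c\,P(\mathbf{U}_c^*) \le \tfrac{1}{n}\|\mathcal{A}(\mathbf{U})-\boldsymbol{b}\|^2 + c\,P(\mathbf{U}).
\end{equation*}
Both penalty terms vanish, so $\|\mathcal{A}(\mathbf{U}_c^*)-\boldsymbol{b}\|^2\le\|\mathcal{A}(\mathbf{U})-\boldsymbol{b}\|^2$; since $\mathbf{U}$ was an arbitrary feasible point of \eqref{eq_7}, $\mathbf{U}_c^*$ minimizes the objective of \eqref{eq_7} over its feasible set, which is the claim.

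There is no serious obstacle here, and I would say so plainly: the argument is the standard observation that a global minimizer of a penalized objective which happens to satisfy the constraint (zero penalty) solves the constrained problem, valid because the penalty is nonnegative and the two objectives agree on the feasible set. The only point requiring care is the sign characterization of $P$, which rests on the identity $\langle\mathbf{U},\mathbf{I}\rangle=\|\mathbf{U}\|_*$ for $\mathbf{U}\in\mathcal{S}^d_+$ together with the ordering of the singular values; this is elementary and already invoked in the text. I would also flag the harmless mismatch of the factor $1/n$ between the objectives written in \eqref{eq_7} and \eqref{eq_8}, which merely scales the fitting term by a positive constant and hence leaves the set of minimizers unchanged.
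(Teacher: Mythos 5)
Your proof is correct. The paper offers no argument of its own for this proposition---it simply defers to Proposition 3.1 of Gao and Sun---and your self-contained derivation is exactly the standard exact-penalty reasoning behind that citation: $P(\mathbf{U})=\sum_{i=r+1}^{d}\sigma_i(\mathbf{U})\ge 0$ on $\mathcal{S}^d_+$ with equality precisely when $\operatorname{rank}(\mathbf{U})\le r$, followed by comparison of objective values at feasible points of \eqref{eq_7}; your remark that the $1/n$ factor mismatch between \eqref{eq_7} and \eqref{eq_8} is a harmless positive rescaling is also accurate.
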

\begin{proof}
For the details of proof, one can refer to  \cite[Proposition 3.1]{Ref_gao2010majorized}.
\end{proof}
\begin{proposition}\label{prop_2}
 Let $\hat{\mathbf{U}}^*$ be an optimal solution to the LSSDP \eqref{eq_4} and $\mathbf{U}_r$ a feasible solution to problem \eqref{eq_7}. Given an  $\epsilon>0$, and the $c$ is chosen such that $\left(J(\mathbf{U}_r)-J(\hat{\mathbf{U}}^*)\right)/c\leq \epsilon$. Then 
 \begin{equation}\label{eq_9}
\langle \mathbf{U}_c^*,\mathbf{I}\rangle-\|\mathbf{U}_c^*\|_{(r)}<\epsilon \quad \text{and}\quad J(\mathbf{U}_c^*)\leq \bar{J}- c\left(\right\langle \mathbf{U}_c^*,\mathbf{I}\rangle-\|\mathbf{U}_c^*\|_{(r)})\leq \bar{J}.
\end{equation}
where $\bar{J} = J(\mathbf{U}^*)$, $\mathbf{U}^*$ is a global optimal solution of RCLSSDP \eqref{eq_7}.
\end{proposition}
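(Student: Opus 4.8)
The plan is to regard the penalty term $g(\mathbf{U}) := \langle\mathbf{U},\mathbf{I}\rangle-\|\mathbf{U}\|_{(r)}$ as a nonnegative gap functional on the cone $\mathcal{S}^d_+$ and to derive both assertions from a short chain of elementary inequalities built on three facts: the global optimality of $\mathbf{U}_c^*$ for $J_c$, the feasibility of $\mathbf{U}^*$ and $\mathbf{U}_r$ for \eqref{eq_7}, and the unconstrained optimality of $\hat{\mathbf{U}}^*$ for $J$. First I would record the structural observation underlying everything: for every $\mathbf{U}\in\mathcal{S}^d_+$ one has $\langle\mathbf{U},\mathbf{I}\rangle=\operatorname{tr}(\mathbf{U})=\|\mathbf{U}\|_*=\sum_{i=1}^d\sigma_i(\mathbf{U})$, hence $g(\mathbf{U})=\sum_{i=r+1}^d\sigma_i(\mathbf{U})\ge 0$, with equality precisely when $\operatorname{rank}(\mathbf{U})\le r$. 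In particular $g(\mathbf{U}_c^*)\ge 0$, while the feasibility of both the global optimizer $\mathbf{U}^*$ of \eqref{eq_7} and the feasible point $\mathbf{U}_r$ gives $g(\mathbf{U}^*)=g(\mathbf{U}_r)=0$.

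For the right-hand chain I would compare $\mathbf{U}_c^*$, which minimizes $J_c=J+c\,g$ over $\mathcal{S}^d_+$, with the feasible competitor $\mathbf{U}^*$. Global optimality gives $J(\mathbf{U}_c^*)+c\,g(\mathbf{U}_c^*)=J_c(\mathbf{U}_c^*)\le J_c(\mathbf{U}^*)=J(\mathbf{U}^*)+c\cdot 0=\bar J$, which rearranges to the first inequality $J(\mathbf{U}_c^*)\le\bar J-c\,g(\mathbf{U}_c^*)$. The second inequality $\bar J-c\,g(\mathbf{U}_c^*)\le\bar J$ is then immediate from $c>0$ together with $g(\mathbf{U}_c^*)\ge 0$, completing the right half of \eqref{eq_9}.

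For the bound on $g(\mathbf{U}_c^*)$ I would instead test $\mathbf{U}_c^*$ against the feasible point $\mathbf{U}_r$: optimality yields $J(\mathbf{U}_c^*)+c\,g(\mathbf{U}_c^*)\le J_c(\mathbf{U}_r)=J(\mathbf{U}_r)$. Since $\hat{\mathbf{U}}^*$ minimizes $J$ over all of $\mathcal{S}^d_+$ without any rank restriction, we have $J(\hat{\mathbf{U}}^*)\le J(\mathbf{U}_c^*)$; substituting this lower bound gives $c\,g(\mathbf{U}_c^*)\le J(\mathbf{U}_r)-J(\mathbf{U}_c^*)\le J(\mathbf{U}_r)-J(\hat{\mathbf{U}}^*)$. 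Dividing by $c>0$ and invoking the hypothesis $(J(\mathbf{U}_r)-J(\hat{\mathbf{U}}^*))/c\le\epsilon$ delivers $g(\mathbf{U}_c^*)\le\epsilon$.

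The one genuinely delicate point, which I expect to be the main obstacle, is upgrading this last estimate to the strict inequality $g(\mathbf{U}_c^*)<\epsilon$ claimed in \eqref{eq_9}: the clean chain above only produces $\le\epsilon$. To recover strictness I would split into two cases. If $J(\mathbf{U}_c^*)>J(\hat{\mathbf{U}}^*)$, then the inequality $c\,g(\mathbf{U}_c^*)\le J(\mathbf{U}_r)-J(\mathbf{U}_c^*)< J(\mathbf{U}_r)-J(\hat{\mathbf{U}}^*)\le c\epsilon$ is strict, so $g(\mathbf{U}_c^*)<\epsilon$. If instead $J(\mathbf{U}_c^*)=J(\hat{\mathbf{U}}^*)$, then $\mathbf{U}_c^*$ is itself a minimizer of the fit term $J$, and since it simultaneously minimizes $J_c$ it must carry the smallest value of $g$ among all $J$-minimizers; one then argues that this forces the rank constraint to be active, i.e. $g(\mathbf{U}_c^*)=0<\epsilon$. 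All remaining manipulations are routine, so the substance of the argument is the nonnegativity of $g$ on $\mathcal{S}^d_+$ and the three optimality/feasibility comparisons, with the case analysis only needed to sharpen $\le\epsilon$ into the stated strict bound.
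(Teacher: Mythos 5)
Your overall skeleton is the standard penalty-comparison argument, and it is sound as far as it goes: testing $\mathbf{U}_c^*$ against the feasible competitor $\mathbf{U}^*$ gives the right-hand chain of \eqref{eq_9}, and testing against $\mathbf{U}_r$ combined with $J(\hat{\mathbf{U}}^*)\leq J(\mathbf{U}_c^*)$ gives $g(\mathbf{U}_c^*)\leq\epsilon$ for $g(\mathbf{U}):=\langle\mathbf{U},\mathbf{I}\rangle-\|\mathbf{U}\|_{(r)}$. Note that the paper itself offers no proof at all, deferring to \cite[Proposition 3.2]{Ref_gao2010majorized}, where the same comparison argument appears and the conclusion is stated with a non-strict bound; so your self-contained write-up is welcome, and you are right that the strict inequality claimed in \eqref{eq_9} is the only nontrivial point. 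The genuine gap is your Case 2. The inference you invoke --- that $\mathbf{U}_c^*$ having the smallest $g$-value among all minimizers of $J$ forces $g(\mathbf{U}_c^*)=0$ --- is false as stated: if $\mathcal{A}$ is injective, the minimizer of $J$ over $\mathcal{S}^d_+$ is unique, hence trivially $g$-smallest among $J$-minimizers, and nothing prevents it from having rank larger than $r$, i.e. $g>0$. What is actually true is that such a point cannot simultaneously be a global minimizer of $J_c$; establishing this needs a first-order argument exploiting the smoothness of $J$, not value comparisons among $J$-minimizers alone.

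Here is a repair that closes the case. In Case 2, $\mathbf{U}_c^*$ minimizes $J$ over $\mathcal{S}^d_+$. Write $\mathbf{U}_c^*=\sum_i\lambda_i\boldsymbol{q}_i\boldsymbol{q}_i^{\top}$ with $\lambda_1\geq\cdots\geq\lambda_d\geq0$, put $\mathbf{T}=\sum_{i>r}\lambda_i\boldsymbol{q}_i\boldsymbol{q}_i^{\top}\in\mathcal{S}^d_+$ and $g^*=g(\mathbf{U}_c^*)=\operatorname{tr}(\mathbf{T})$. For $s\in[0,1]$ the matrix $\mathbf{U}(s)=\mathbf{U}_c^*-s\mathbf{T}$ stays in $\mathcal{S}^d_+$, its top $r$ eigenvalues are unchanged (since $(1-s)\lambda_{r+1}\leq\lambda_r$), so $g(\mathbf{U}(s))=(1-s)g^*$. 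Because $\mathbf{U}_c^*+s\mathbf{T}$ is also feasible, minimality of the smooth convex quadratic $J$ at $\mathbf{U}_c^*$ gives $\langle\nabla J(\mathbf{U}_c^*),\mathbf{T}\rangle\geq0$. On the other hand, expanding $J_c(\mathbf{U}(s))\geq J_c(\mathbf{U}_c^*)$ yields $-s\langle\nabla J(\mathbf{U}_c^*),\mathbf{T}\rangle+\frac{s^{2}}{n}\|\mathcal{A}(\mathbf{T})\|^{2}\geq c\,s\,g^*$; dividing by $s>0$ and letting $s\downarrow0$ gives $c\,g^*\leq-\langle\nabla J(\mathbf{U}_c^*),\mathbf{T}\rangle\leq0$, hence $g^*=0<\epsilon$. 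With this insertion your two-case analysis is complete and does deliver the strict inequality; everything else in your proposal is correct and routine.
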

\begin{proof}
For the details of proof, one can refer to \cite[Proposition 3.2]{Ref_gao2010majorized}.
\end{proof}
 From the Proposition \ref{prop_2}, it is easy to see  that an $\epsilon$-optimal solution to the RCLSSDP \eqref{eq_7} in the sense of \eqref{eq_9} is guaranteed by solving the penalized problem \eqref{eq_8} with a chosen penalty parameter $c$. This provides the rationale to replace the rank constraint in problem \eqref{eq_7} by the penalty function $c \left(\langle \mathbf{U},\mathbf{I}\rangle-\|\mathbf{U}\|_{(r)}\right)$. Obviously, the penalty parameter $c$ is essential to produce low rank solution for the DCLSSDP \eqref{eq_8}. Thus, how to find the appropriate penalty parameter and efficiently solve the corresponding penalty problem is very important. Firstly, for choosing appropriate penalty parameter, we have the following conclusion.
\begin{proposition}\label{prop_3}
If exist a  $\bar{c}>0$, let $\mathbf{U}^*_{\bar{c}}\in \mathcal{S}^d_+$ be one of the global optimal solution of \eqref{eq_8} with penalty parameter $\bar{c}$ such that $\|\mathbf{U}^*_{\bar{c}}\|_*-\|\mathbf{U}^*_{\bar{c}}\|_{(r)} = 0$. Then for any $c>\bar{c}$, $\mathbf{U}^*_{\bar{c}}$ is one of the global optimal solution of \eqref{eq_8} with  penalty parameter $c$.
\end{proposition}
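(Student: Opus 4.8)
The plan is to exploit the fact that the DC regularizer is nonnegative on $\mathcal{S}^d_+$ and vanishes exactly at $\mathbf{U}^*_{\bar{c}}$, so that increasing the penalty parameter can only raise the objective at competing points while leaving the value at $\mathbf{U}^*_{\bar{c}}$ unchanged. First I would record the key nonnegativity: for every $\mathbf{U}\in\mathcal{S}^d_+$ one has $\langle\mathbf{U},\mathbf{I}\rangle=\|\mathbf{U}\|_*\geq\|\mathbf{U}\|_{(r)}$, since for a positive semidefinite matrix the nuclear norm equals the full sum of eigenvalues while the Ky-Fan $r$-norm retains only the largest $r$ of them. Hence the regularizer $g(\mathbf{U}):=\langle\mathbf{U},\mathbf{I}\rangle-\|\mathbf{U}\|_{(r)}\geq 0$ throughout the feasible set, with equality precisely when $\operatorname{rank}(\mathbf{U})\leq r$; in particular the hypothesis $\|\mathbf{U}^*_{\bar{c}}\|_*-\|\mathbf{U}^*_{\bar{c}}\|_{(r)}=0$ reads $g(\mathbf{U}^*_{\bar{c}})=0$.

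Next I would split the objective as $J_c(\mathbf{U})=f(\mathbf{U})+c\,g(\mathbf{U})$, where $f(\mathbf{U})=\frac{1}{n}\|\mathcal{A}(\mathbf{U})-\boldsymbol{b}\|^2$ is the $c$-independent fitting term. Global optimality of $\mathbf{U}^*_{\bar{c}}$ for the $\bar{c}$-problem gives, for all $\mathbf{U}\in\mathcal{S}^d_+$, the inequality $f(\mathbf{U}^*_{\bar{c}})+\bar{c}\,g(\mathbf{U}^*_{\bar{c}})\leq f(\mathbf{U})+\bar{c}\,g(\mathbf{U})$. Substituting $g(\mathbf{U}^*_{\bar{c}})=0$ reduces this to the single working inequality $f(\mathbf{U}^*_{\bar{c}})\leq f(\mathbf{U})+\bar{c}\,g(\mathbf{U})$ for every feasible $\mathbf{U}$.

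Finally, I would fix any $c>\bar{c}$ and take an arbitrary competitor $\mathbf{U}\in\mathcal{S}^d_+$. Since $g(\mathbf{U})\geq 0$ and $c>\bar{c}$, we have $\bar{c}\,g(\mathbf{U})\leq c\,g(\mathbf{U})$, and chaining with the working inequality yields $f(\mathbf{U}^*_{\bar{c}})\leq f(\mathbf{U})+c\,g(\mathbf{U})=J_c(\mathbf{U})$. On the other hand $J_c(\mathbf{U}^*_{\bar{c}})=f(\mathbf{U}^*_{\bar{c}})+c\,g(\mathbf{U}^*_{\bar{c}})=f(\mathbf{U}^*_{\bar{c}})$, so $J_c(\mathbf{U}^*_{\bar{c}})\leq J_c(\mathbf{U})$ for every feasible $\mathbf{U}$, which is exactly global optimality of $\mathbf{U}^*_{\bar{c}}$ for the $c$-problem.

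The argument is entirely elementary, being a two-line chaining of inequalities. The only point requiring genuine care — and the ``main obstacle'' insofar as there is one — is establishing the nonnegativity of the regularizer on $\mathcal{S}^d_+$ together with the observation that the fitting term $f$ does not depend on the penalty parameter; it is precisely this monotone, $c$-independent structure that makes the penalty term act as an increasing function of $c$ at each fixed point. No additional convexity or compactness assumptions are needed.
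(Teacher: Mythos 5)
Your proof is correct and takes essentially the same route as the paper: both arguments rest on the nonnegativity of $g(\mathbf{U})=\langle\mathbf{U},\mathbf{I}\rangle-\|\mathbf{U}\|_{(r)}$ on $\mathcal{S}^d_+$, its vanishing at $\mathbf{U}^*_{\bar{c}}$, and the global optimality at parameter $\bar{c}$. If anything, your direct chaining $\bar{c}\,g(\mathbf{U})\leq c\,g(\mathbf{U})$ is cleaner than the paper's version, which adds $\bar{c}\,g(\mathbf{U})$ to both sides and concludes via the reparametrization $c\mapsto c+\bar{c}$.
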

\begin{proof}
We can rewirite the \eqref{eq_8} as 
 \begin{equation}\label{eq_10}
\min_{\mathbf{U} \in \mathcal{S}^d_+} J(\mathbf{U}) + c(\| \mathbf{U}\|_*-\|\mathbf{U}\|_{(r)}).
\end{equation}
From $\|\mathbf{U}^*_{\bar{c}}\|_*-\|\mathbf{U}^*_{\bar{c}}\|_{(r)} = 0$ and $\|\mathbf{U}\|_*-\|\mathbf{U}\|_{(r)} \geq 0$, we have
\[J(\mathbf{U}) + c(\|\mathbf{U}\|_*-\|\mathbf{U}\|_{(r)})\geq J(\mathbf{U}) + c(\|\mathbf{U}^*_{\bar{c}}\|_*-\|\mathbf{U}^*_{\bar{c}}\|_{(r)}),\]
in here, $c>0$. Adding $\bar{c}(\|\mathbf{U}\|_*-\|\mathbf{U}\|_{(r)})$ to both side of  the above inequality, we get 
\[J(\mathbf{U}) + (c+\bar{c})(\|\mathbf{U}\|_*-\|\mathbf{U}\|_{(r)})\geq J(\mathbf{U}) + c(\|\mathbf{U}^*_{\bar{c}}\|_*-\|\mathbf{U}^*_{\bar{c}}\|_{(r)})+\bar{c}(\|\mathbf{U}\|_*-\|\mathbf{U}\|_{(r)})\]
Due to the optimality of $\mathbf{U}^*_{\bar{c}}$ for solving \eqref{eq_10}, we deduce that
\[J(\mathbf{U}) + (c+\bar{c})(\|\mathbf{U}\|_*-\|\mathbf{U}\|_{(r)})\geq J(\mathbf{U}^*_{\bar{c}}) + (c+\bar{c})(\|\mathbf{U}^*_{\bar{c}}\|_*-\|\mathbf{U}^*_{\bar{c}}\|_{(r)})\]
holds, which implies $\mathbf{U}^*_{\bar{c}}\|_{(r)}$ solves $\min_{\mathbf{U} \in \mathcal{S}^n_+} J(\mathbf{U}) + (c+\bar{c})(\| \mathbf{U}\|_*-\|\mathbf{U}\|_{(r)})$. This completes the proof by setting $c = c+\bar{c}$.
\end{proof}
\begin{remark}
If exist a  $\hat{c}>0$, let $\mathbf{U}^*_{\hat{c}}\in \mathcal{S}^d_+$ be one of the global optimal solution of \eqref{eq_8} with penalty parameter $\hat{c}$ such that $\operatorname{rank}(\mathbf{U}^*_{\hat{c}}) = \hat{r}\leq r$,  then for any  $c>\hat{c}$, $\operatorname{rank}(\mathbf{U}^*_{c})=\hat{r}$  holds, in which, $\mathbf{U}^*_{c}$ is one of the global optimal solution of \eqref{eq_8} with  penalty parameter $c$. Thus, based on the Proposition \ref{prop_3} and  the Proposition \ref{prop_1}, we use a strategy of gradually increasing the penalty parameter to obtain the appropriate penalty parameter. The details for adjusting the penalty parameter is given in the  Algorithm \ref{alg_penalty}.
\end{remark}
\begin{algorithm}[htb]
\caption{Framework of adjusting the penalty parameter for solving the RCLSSDP \eqref{eq_7}}
\label{alg_penalty}
\begin{algorithmic}
\STATE \textbf{Step 0}. Choose $c_0>0$, give $\rho>1$, set $i = 0$.
\STATE \textbf{Step 1}. Solve the  problem \eqref{eq_8} with penalty parameter $c_i$ by s-iPDCA \ref{alg_s_iPDCA}, shown as \[\mathbf{U}^*_{c_i}=\text{arg}\min J_{c_i}(\mathbf{U})+\delta_{\mathcal{S}_+^d}(\mathbf{U})\]

\STATE \textbf{Step 2}. If $\operatorname{rank}(\mathbf{U}^*_{c_i})\leq r$ holds, stop and return $\mathbf{U}^*_{c_i}$, otherwise,  $c_{i+1} = \rho c_i$, $i= i+1$, go to $\textbf{Step 0}$. 
\end{algorithmic}
\end{algorithm}
\section{Inexact proximal DC algorithm with sieving strategy for solving the DCLSSDP}\label{sec:3}
\subsection{Algorithm framework for solving the DCLSSDP}\label{sec:3_1}
 Obviously, for fixed penalty parameter $c$, the  penalized  problem \eqref{eq_8} can be formulated into the following standard DC form
 \begin{equation}\label{eq_11}
\min_{\mathbf{U} \in \mathcal{S}^d_+} J_c(\mathbf{U}) = \underbrace{\frac{1}{n}\|\mathcal{A}\left(\mathbf{U}\right)- \boldsymbol{b}\|^{2}+c\langle \mathbf{U},\mathbf{I}\rangle}_{f_1(\mathbf{U})}-\underbrace{c\|\mathbf{U}\|_{(r)}}_{f_2(\mathbf{U})}.
\end{equation}
Let's first briefly review the classical proximal DC algorithm (PDCA) for solving DC problem \eqref{eq_11}, the detail is shown in Algorithm \ref{alg_PDCA}. 
\begin{algorithm}[htb]
\caption{Proximal DC algorithm for solving the DCLSSDP \eqref{eq_11}}
\label{alg_PDCA}
\begin{algorithmic}
\STATE \textbf{Step 0}. Given $c>0$, initize $\mathbf{U}^{0} \in \mathcal{S}_+^d$, $\mathbf{W}^{0}\in \partial f_2(\mathbf{U}^0)$, tolerance error $\varepsilon\geq 0$, proximal parameter $\alpha>0$, $k = 0$.
\STATE \textbf{Step 1}. 
\begin{equation}\label{eq_12}
\mathbf{U}^{k+1} = \text{arg}\min f_1\left(\mathbf{U}\right)- \langle \mathbf{U}, \mathbf{W}^{k}\rangle +\delta_{\mathcal{S}^d_+}\left(\mathbf{U}\right) + \frac{\alpha}{2}\|\mathbf{U}-\mathbf{U}^{k}\|_F^2,
\end{equation}

\STATE \textbf{Step 2}. If $\|\mathbf{U}^{k+1}-\mathbf{U}^{k}\|_F \leq \varepsilon$, stop and return $\mathbf{U}^{k+1}$.
\STATE \textbf{Step 3}. Choose $\mathbf{W}^{k+1} \in\partial f_2(\mathbf{U}^{k+1})$, set $k\leftarrow k+1$, go to $\textbf{Step 1}$.
\end{algorithmic}
\end{algorithm}
 As we known, it is computation expensive and time-consuming to solve the subproblem \eqref{eq_12} exactly at each iteration of the PDCA \ref{alg_PDCA}. Therefore, we need to solve the strongly convex SDP \eqref{eq_12} inexactly. How to design an inexact proximal DC algorithm that can guarantee the theoretical convergence and good numerical performance for solving large scale DC problem remain an open question. Some researchers proposed to solve the subproblem inexactly, i.e., the inexact solution of \eqref{eq_12} satisfies following KKT condition:
\begin{equation} \label{eq_13}
\mathbf{0} \in \nabla f_1\left(\mathbf{U}^{k+1}\right) -\mathbf{W}^{k}+\partial\delta_{\mathcal{S}^d_+}\left(\mathbf{U}^{k+1}\right)+\alpha\left(\mathbf{U}^{k+1}-\mathbf{U}^{k}\right)-\mathbf{\Delta}^{k+1},
\end{equation}
where $\mathbf{\Delta}^{k+1}$ is the inexact term. Equivalently, $\mathbf{U}^{k+1}$  is the optimal solution of the following problem:
\begin{equation}\label{eq_14}
\min f_1\left(\mathbf{U}\right)- \langle \mathbf{U}, \mathbf{W}^{k}\rangle +\delta_{\mathcal{S}^d_+}\left(\mathbf{U}\right) + \frac{\alpha}{2}\|\mathbf{U}-\mathbf{U}^{k}\|_F^2-\langle \mathbf{\Delta}^{k+1}, \mathbf{U}\rangle,
\end{equation}
If  $\|\mathbf{\Delta}^{k+1}\|_F\leq \epsilon_{k+1}$, the trial point $\mathbf{U}^{k+1}$ is the  $\epsilon_{k+1}$-inexact solution of strongly convex subproblem \eqref{eq_12}.
 In traditional inexact algorithm \cite{Ref_jiang2012An,Ref_sun2015An}, the sequence $\{\epsilon_{k+1}\}$ is assumed to satisfy the condition that its summability $\Sigma_{k=0}^{\infty}\epsilon_{k+1}$ is convergent. Although this kind inexact strategy is simple, it cannot guarante the convergence of corresponding DC algorithm. To make sure that the algorithm is convergent, Wang et al. \cite{Ref_wang2019task} assumed the condition that $\|\mathbf{\Delta}^{k+1}\|_F < \frac{\alpha}{2}\|\mathbf{U}^{k+1}-\mathbf{U}^{k}\|_F$ holds. However, this condition may be unreachable in numerical experiments because the inexact term $\mathbf{\Delta}^{k+1}$ is related to $\mathbf{U}^{k+1}$ implicitly. Recently, Souza et al. \cite{Ref_souza2016global} proposed an inexact  proximal DC algorithm (iPDCA), in their algorithm, the inexact solution $\mathbf{U}^{k+1}$ is assumed to satisfy
 \begin{equation}\nonumber
f_1\left(\mathbf{U}^{k}\right)-f_1\left(\mathbf{U}^{k+1}\right)-\left\langle \mathbf{W}^{k}, \mathbf{U}^{k}-\mathbf{U}^{k+1}\right\rangle \geq \frac{(1-\sigma)\alpha}{2}\left\|\mathbf{U}^{k+1}-\mathbf{U}^{k}\right\|_F^{2}
\end{equation}
and 
\begin{equation}\nonumber
\left\|\mathbf{W}^{k+1}-\mathbf{W}^{k}\right\|_F \leq \theta\left\|\mathbf{U}^{k+1}-\mathbf{U}^{k}\right\|_F
\end{equation}
with $\mathbf{W}^{k+1}\in\partial f_2(\mathbf{U}^{k+1})$ and $\|\mathbf{\Delta}^{k+1}\|_F<\frac{\sigma\alpha}{2}\left\|\mathbf{U}^{k+1}-\mathbf{U}^{k}\right\|_F$. Obviously, these conditions are also difficult to implement in practical numerical experiments.\par
 Different from the above inexact strategies, in addition to $\lim_{k\rightarrow\infty}\epsilon_{k+1}=0$, no assumpution else is made for $\{\epsilon_{k+1}\}$ and $\{\mathbf{\Delta}_{k+1}\}$ in our algorithm. This bring that the sequence $\left\{J_c(\mathbf{U}^k)\right\}$ generated by the new iPDCA with $\{\epsilon_{k+1}\}$ and $\{\mathbf{\Delta}_{k+1}\}$  may not decrease. To address this issue, we employ a sieving strategy in our proposed iPDCA (s-iPDCA). Specifically, we choose a  stability center $\mathbf{U}^{k+1}$ at each iteration of s-iPDCA, then choose $\mathbf{W}^{k+1} \in\partial f_2(\mathbf{U}^{k+1})$ and the $\mathbf{U}^{k+1}$ is set as the proximal point for the next iteration. Once a new trial point $\mathbf{V}^{k+1}$ satisfying the $\epsilon_{k+1}$-inexact condition is computed,  the following rule is used to update $\mathbf{U}^{k+1}$: if
\begin{equation}\label{eq_15}
\|\mathbf{\Delta}^{k+1}\|_F < \frac{(1-\kappa)\alpha}{2}\|\mathbf{V}^{k+1}-\mathbf{U}^{k}\|_F,
\end{equation}
then we say a serious step is performed and set $\mathbf{U}^{k+1}:= \mathbf{V}^{k+1}$; otherwise, we say a null step is performed and set $\mathbf{U}^{k+1} := \mathbf{U}^{k}$, which means that the stability center remains unchanged. In \eqref{eq_15}, $\kappa\in \left(0, 1\right)$ is a tuning parameter to balance the  efficiency of the s-iPDCA and the inexactness of solution for \eqref{eq_14}. It should be noted that, as shown in s-iPDCA \ref{alg_s_iPDCA}, when the test \eqref{eq_15} doesn't hold true, only the iteration counter $k$ and inexact error bound $\epsilon_{k+1}$ are changed.
\par
\begin{algorithm}[htb]
\caption{Inexact proximal DC algorithm with sieving strategy (s-iPDCA) for solving the DCLSSDP \eqref{eq_11}}
\label{alg_s_iPDCA}
\begin{algorithmic}
\STATE \textbf{Step 0}. Given $c>0$, initize $\mathbf{U}^{0} = \mathbf{V}^{0}\in \mathcal{S}_+^d$, $\mathbf{W}^{0}\in \partial f_2(\mathbf{U}^0)$, tolerance error $\varepsilon\geq 0$, non-negative monotone descent sequence $\{\epsilon_{k+1}\}$, proximal parameter $\alpha>0$, $k = 0$.
\STATE \textbf{Step 1}. Inexactly solve \eqref{eq_12} so that $\|\mathbf{\Delta}^{k+1}\|_F\leq \epsilon_{k+1}$ holds true, shown as
\begin{equation}\label{eq_16}
\mathbf{V}^{k+1} = \text{arg}\min f_1\left(\mathbf{U}\right)- \langle \mathbf{U}, \mathbf{W}^{k}\rangle +\delta_{\mathcal{S}^d_+}\left(\mathbf{U}\right) + \frac{\alpha}{2}\|\mathbf{U}-\mathbf{U}^{k}\|_F^2-\langle \mathbf{\Delta}^{k+1}, \mathbf{U}\rangle,
\end{equation}

\STATE \textbf{Step 2}. If $\|\mathbf{V}^{k+1}-\mathbf{U}^{k}\|_F \leq \varepsilon$, stop and return $\mathbf{V}^{k+1}$. 
\STATE \textbf{Step 3}. If \eqref{eq_15} holds true, set $\mathbf{U}^{k+1} := \mathbf{V}^{k+1}$, choose $\mathbf{W}^{k+1} \in\partial f_2(\mathbf{U}^{k+1})$, otherwise, set $\mathbf{U}^{k+1} := \mathbf{U}^{k}$, $\mathbf{W}^{k+1} :=\mathbf{W}^{k}$. Set $k\leftarrow k+1$, go to $\textbf{Step 1}$.
\end{algorithmic}
\end{algorithm}
In this paper, we choose the  $\mathbf{W}^{k+1} \in\partial f_2(\mathbf{U}^{k+1})$ as the following: let $\lambda_1\geq\lambda_2\geq\cdots\geq\lambda_d$ is the eigenvalues of $\mathbf{U}^{k+1}$ being arranged in
nonincreasing order, then $\mathbf{U}^{k+1}$ has spectral decomposition $\mathbf{U}^{k+1} = \mathbf{Q}\mathbf{\Lambda}\mathbf{Q}^{\top}$, where $\mathbf{\Lambda}$ is the diagonal matrix whose $i^{th}$ diagonal entry is $\lambda_i$,  and the $i^{th}$ column of $\mathbf{Q}$ is the eigenvector of $\mathbf{U}^{k+1}$ corresponding to the eigenvalue $\lambda_i$, given as $\boldsymbol{q}_i$. Then set $\mathbf{W}^{k+1}=c\Sigma_{i = 1}^r \boldsymbol{q}_i\boldsymbol{q}_i^{\top}$.\par

Next, we consider to solve the subproblem \eqref{eq_16} from its dual problem. If we ignore the inexact term $\mathbf{\Delta}^{k+1}$ of the subproblem \eqref{eq_16}, then, its dual problem can be equivalently formulated as the following minimization problem: 
\begin{equation} \label{eq_17_1}
\min \frac{n}{4} \|\boldsymbol{z}\|^{2}  + \langle \boldsymbol{z},\boldsymbol{b}\rangle+\delta_{\mathcal{S}^d_+}\left(\mathbf{Y}\right)+\frac{1}{2\alpha}\|\mathcal{A}^*\left(\boldsymbol{z}\right)-\mathbf{Y}-\mathbf{\Phi}^{k}\|_F^2,
\end{equation}
where $\mathbf{\Phi}^{k} = \mathbf{W}^{k}-c\mathbf{I} + \alpha \mathbf{U}^{k}$. 
The KKT condition for solving \eqref{eq_17_1} is given as
\begin{align}
\mathbf{\Pi}_{\mathcal{S}^d_+}\left(\mathbf{Y}+\frac{1}{\alpha}\left(\mathcal{A}^*\left(\boldsymbol{z}\right)-\mathbf{Y}-\mathbf{\Phi}^{k}\right)\right)&=\mathbf{Y}\label{eq_18}\\
\frac{n}{2}\boldsymbol{z} + \boldsymbol{b}+\frac{1}{\alpha}\mathcal{A}\left(\mathcal{A}^*\left(\boldsymbol{z}\right)-\mathbf{Y}-\mathbf{\Phi}^{k}\right)&= \boldsymbol{0} \label{eq_19}
\end{align}
This problem belongs to a general class of unconstrained, multi-block convex optimization problems with coupled objective function. Thus, this problem can be solved efficiently by  two-block accelerate block coordinate descent (ABCD) method. In order to solve the convex problem \eqref{eq_17_1} inexactly, we employ an efficient ABCD algorithm, shown in Algorithm \ref{alg_abcd}. \par
\begin{algorithm}[htb]
\caption{Acclerated block coordinate desent algorithm (ABCD)}
\label{alg_abcd}
\begin{algorithmic}
\STATE \textbf{Step 0}. Initize $\tilde{\mathbf{Y}}^{1}\in \mathcal{S}_+^d$, $\boldsymbol{z}^0$, inexact error bound $\zeta_k$ given in \eqref{eq_34_1}, accelation factor $t_1 =1$, $j\leftarrow 1$.
\STATE \textbf{Step 1}. Update $\boldsymbol{z}$:
\begin{equation}\label{eq_20}
\boldsymbol{z}^{j}= \text {arg}\min\frac{n}{4} \|\boldsymbol{z}\|^{2} + \langle \boldsymbol{z},\boldsymbol{b}\rangle+\frac{1}{2\alpha}\|\mathcal{A}^*\left(\boldsymbol{z}\right)-\tilde{\mathbf{Y}}^j-\mathbf{\Phi}^{k}\|_F^2 
\end{equation}
\STATE \textbf{Step 2}. Update $\mathbf{Y}$:
\begin{equation}\label{eq_21}
\mathbf{Y}^{j} = \text {arg}\min\delta_{\mathcal{S}^m_+}\left(\mathbf{Y}\right)+\frac{1}{2\alpha}\|\mathcal{A}^*\left(\boldsymbol{z}^j\right)-\mathbf{Y}-\mathbf{\Phi}^{k}\|_F^2
\end{equation}
\STATE \textbf{Step 3}. If $\left\|\frac{n}{2}\boldsymbol{z}^j + \boldsymbol{b}+\frac{1}{\alpha}\mathcal{A}\left(\mathcal{A}^*\left(\boldsymbol{z}^j\right)-\mathbf{Y}^j-\mathbf{\Phi}^{k}\right)\right\| \leq \zeta_k$, stop.\par 
\STATE \textbf{Step 4}. Compute $t_{j+1} = \frac{1+\sqrt{1+4t_j^2}}{2}$, $\beta_j = \frac{t_j-1}{t_{j+1}}$, $\tilde{\mathbf{Y}}^{j+1}=\mathbf{Y}^j+\beta_j\left(\mathbf{Y}^j-\mathbf{Y}^{j-1}\right)$; \par set $j\leftarrow j+1$, go to $\textbf{Step 1}$.
\end{algorithmic}
\end{algorithm}
In the Algorithm \ref{alg_abcd},  for the $\boldsymbol{z}$-subproblem \eqref{eq_20}, its solution is obtained by solving the following linear system:
\begin{equation} \label{eq_22_1}
\frac{n}{2}\boldsymbol{z} + \boldsymbol{b}+\frac{1}{\alpha}\mathcal{A}\left(\mathcal{A}^*\left(\boldsymbol{z}\right)-\tilde{\mathbf{Y}}^j-\mathbf{\Phi}^{k}\right)= \boldsymbol{0}.
\end{equation}
By using preconditioned conjugate gradient (PCG) method, we can solve the above linear system efficiently, especially when its scale is very large.
 For the $\mathbf{Y}$-subproblem, fortunately,  it has  closed form solution, given as
\begin{equation}\label{eq_23_1}
\begin{array}{ll}
\mathbf{Y}^{j} &= \text{arg}\min\delta_{\mathcal{S}^d_+}\left(\mathbf{Y}\right)+\frac{1}{2\alpha}\|\mathcal{A}^*\left(\boldsymbol{z}^j\right)-\mathbf{Y}-\mathbf{\Phi}^{k}\|_F^2\\
&= \operatorname{prox}_{\mathcal{S}_+^d}\left(\mathcal{A}^*\left(\boldsymbol{z}^j\right)-\mathbf{\Phi}^{k}\right)= \mathbf{\Pi}_{\mathcal{S}_+^d}\left(\mathcal{A}^*\left(\boldsymbol{z}^j\right)-\mathbf{\Phi}^{k}\right).
\end{array}
\end{equation}
Then, from the relation between primal variable and dual variable, we can obtain  a feasible solution $\mathbf{U}^{(j)}=\frac{\mathbf{Y}^j+\mathbf{\Phi}^{k}-\mathcal{A}^*\left(\boldsymbol{z}^j\right)}{\alpha}$  of \eqref{eq_16} at each iteration of ABCD \ref{alg_abcd}.
As we known, the residuals of KKT equation is a good choice for the termination condition of the Alrorithm \ref{alg_abcd}. 
From \eqref{eq_23_1}, we have
\begin{equation}\label{eq_24_1}
\begin{array}{lll}
&\mathbf{\Pi}_{\mathcal{S}^d_+}\left(\mathbf{Y}^j+\frac{1}{\alpha}\left(\mathcal{A}^*\left(\boldsymbol{z}^j\right)-\mathbf{Y}^j-\mathbf{\Phi}^{k}\right)\right)\\
&= \mathbf{\Pi}_{\mathcal{S}^d_+}\left(\mathbf{\Pi}_{\mathcal{S}_+^d}\left(\mathcal{A}^*\left(\boldsymbol{z}^j\right)-\mathbf{\Phi}^{k}\right)+\frac{1}{\alpha}\mathbf{\Pi}_{\mathcal{S}_-^d}\left(\mathcal{A}^*\left(\boldsymbol{z}^j\right)-\mathbf{\Phi}^{k}\right)\right) = \mathbf{Y}^j.
\end{array}
\end{equation} 
Thus, 
\begin{equation}\label{eq_25_1}
\mathbf{Y}^j = \mathbf{\Pi}_{\mathcal{S}^d_+}\left(\mathbf{Y}^j+\frac{1}{\alpha}\left(\mathcal{A}^*\left(\boldsymbol{z}^j\right)-\mathbf{Y}^j-\mathbf{\Phi}^{k}\right)\right)
\end{equation} 
holds at each iteration of Algorithm \ref{alg_abcd}. This means that the KKT condition \eqref{eq_18} exactly holds at each iteration of ABCD \ref{alg_abcd}. Based on this observation, we do not need to check  the equation \eqref{eq_18} with positive semidefinite cone projection any more, which save massive computation when the matrix dimension $d$ is large ($d>500$). Let \[\gamma^j :=\frac{n}{2}\boldsymbol{z^j} + \boldsymbol{b}+\frac{1}{\alpha}\mathcal{A}\left(\mathcal{A}^*\left(\boldsymbol{z}^j\right)-\mathbf{Y}^j-\mathbf{\Phi}^{k}\right),\] then the Algorithm \ref{alg_abcd} stops if $\|\gamma^j\|\leq\zeta_k$ hold.
\subsection{Algorithm details and low rank structure utilization}\label{sec:3_2}

Although the linear operator $\mathcal{A}$ can be transformed into the form of matrix-vector product, i.e., $\mathcal{A}(\mathbf{U}) = \mathbf{A}\operatorname{vec}(\mathbf{U})$, it is impractical to store the matrix $\mathbf{A}$ with size of $p\times d^2$ when the size of problem \eqref{eq_11} is large. Thus, we need to form the linear operator $\mathcal{A}$ and its conjugate $\mathcal{A}^*$ at each iteration of the ABCD \ref{alg_abcd} and the s-iPDCA \ref{alg_s_iPDCA}. Therefore, the major computation for the s-iPDCA \ref{alg_s_iPDCA} focuses on the positive semidefinite cone projection for solving $\mathbf{Y^j}$ in the ABCD \ref{alg_abcd} and the formation of operators $\mathcal{A}$ and its conjugate $\mathcal{A}^*$.\par
 We further reduce the computation and the storage of the ABCD \ref{alg_abcd} and the s-iPDCA \ref{alg_s_iPDCA} for solving the DCLSSDP \eqref{eq_11} by employing the low rank structure of solution $\mathbf{U}^k$. Let $\lambda_1\geq\lambda_2\geq\cdots\geq\lambda_d$ be the eigenvalues of $\left(\mathcal{A}^*\left(\boldsymbol{z}^j\right)-\mathbf{\Phi}^{k}\right)$ being arranged in nonincreasing order. Denote $\mu:=\left\{i|\lambda_i>0,i = 1,...,d\right\}$ and  $\nu:=\left\{i|\lambda_i<0,i = 1,...,d\right\}$. Then $\left(\mathcal{A}^*\left(\boldsymbol{z}^j\right)-\mathbf{\Phi}^{k}\right)$ has the following spectral decomposition
\begin{equation}\nonumber
\mathcal{A}^*\left(\boldsymbol{z}^j\right)-\mathbf{\Phi}^{k} = \mathbf{Q}\mathbf{\Lambda}\mathbf{Q}^{\top},\quad \mathbf{\Lambda} 
= \left[\begin{array}{ccc}
\mathbf{\Lambda_{\mu}} & 0 & 0\\
0 & 0 & 0\\
0 & 0 & \mathbf{\Lambda_{\nu}}
\end{array}\right]
\end{equation}
In here, $\mathbf{\Lambda}$ is the diagonal matrix whose $i^{th}$ diagonal entry is $\lambda_i$,  and the $i^{th}$ column of $\mathbf{Q}$ is the eigenvector of $\mathbf{U}^{k+1}$ corresponding to the eigenvalue $\lambda_i$. Thus, we have 
\begin{equation}\nonumber
\mathbf{Y}^j = \operatorname{\Pi}_{\mathcal{S}_+^d}\left(\mathcal{A}^*\left(\boldsymbol{z}^j\right)-\mathbf{\Phi}^{k}\right)=\mathbf{Q}_{\mu}\mathbf{\Lambda}_{\mu}\mathbf{Q}_{\mu}^{\top}
\end{equation}
From the relation between primal variable and dual variable, i.e., $\mathbf{U}^{(j)}=\frac{\mathbf{Y}^j+\mathbf{\Phi}^{k}-\mathcal{A}^*\left(\boldsymbol{z}^j\right)}{\alpha}$,  we have
\begin{equation}\nonumber
\mathbf{U}^{(j)} = -\frac{1}{\alpha}\left(\mathcal{A}^*\left(\boldsymbol{z}^j\right)-\mathbf{\Phi}^{k}-\mathbf{Y}^j\right)=-\frac{1}{\alpha}\mathbf{Q}_{\nu}\mathbf{\Lambda}_{\nu}\mathbf{Q}_{\nu}^{\top}.
\end{equation}
From the formulation of $\mathbf{Y}^j$ and $\mathbf{U}^{(j)}$, we know 
\[\operatorname{rank}(\mathbf{Y}^j)+\operatorname{rank}(\mathbf{U}^{(j)})= |\mu|+|\nu|\leq d.\]Then if $|\mu|< \frac{d}{2}$ holds true, we update $\mathbf{Y}^j$  by $\mathbf{Y}^j = \mathbf{Q}_{\mu}\mathbf{\Lambda}_{\mu}\mathbf{Q}_{\mu}^{\top}$, otherwise by $\mathbf{Y}^j = \mathcal{A}^*\left(\boldsymbol{z}^j\right)-\mathbf{\Phi}^{k}- \mathbf{Q}_{\nu}\mathbf{\Lambda}_{\nu}\mathbf{Q}_{\nu}^{\top}$.  In fact, only the second case that $|\nu|< \frac{d}{2}$ will occur in our algorithm \ref{alg_abcd}, so we can store the $\mathbf{Q}_{\nu}\sqrt{-\mathbf{\Lambda}_{\nu}}$ instead of $\mathbf{Q}_{\mu}\sqrt{\mathbf{\Lambda}_{\mu}}$ or $\mathbf{Q}_{\mu}\mathbf{\Lambda}_{\mu}\mathbf{Q}_{\mu}^{\top}$ to reduce storage cost. Let $\mathbf{V} := \mathbf{Q}_{\nu}\sqrt{-\mathbf{\Lambda}_{\nu}}\in\mathcal{R}^{n\times |\nu|}$. Then, we can formulate the operator $\mathcal{A\left(\mathbf{V}\mathbf{V}^{\top}\right)}$ as 
\begin{equation}\nonumber
\mathcal{A}\left(\mathbf{V}\mathbf{V}^{\top}\right)  = \left[\langle \mathbf{A}_1,\mathbf{V}\mathbf{V}^{\top}\rangle,\langle \mathbf{A}_2, \mathbf{V}\mathbf{V}^{\top}\rangle,...,\langle \mathbf{A}_p, \mathbf{V}\mathbf{V}^{\top}\rangle\right]^{\top},
\end{equation}
in here, $\langle \mathbf{A}_i,\mathbf{V}\mathbf{V}^{\top}\rangle$ can be computed as 
\begin{equation}\nonumber
\begin{array}{ll}
\langle \mathbf{A}_i,\mathbf{V}\mathbf{V}^{\top}\rangle  = \langle \tau_{(i)}\tau_{(i)}^{\top},\mathbf{V}\mathbf{V}^{\top}\rangle =\tau_{(i)}^{\top}\mathbf{V}\mathbf{V}^{\top}\tau_{(i)}=\left(\mathbf{V}^{\top}\tau_{(i)}\right)^{\top}\left(\mathbf{V}^{\top}\tau_{(i)}\right).
\end{array}
\end{equation}
Then, the amount of computation for formulating the operator $\mathcal{A}$ on rank-$|\nu|$ matrix is reduced from $O\left(p(d^2+d)\right)$ to $O\left((|\nu| d+|\nu|)p\right)$. This strategy will significantly reduce the computation when $|\nu|\ll d$. \par
Based on the above analysis, we form the linear operator $\mathcal{A}(\mathbf{Y}^j)$ as
\begin{equation}\nonumber
\begin{array}{ll}
\mathcal{A}\left(\mathbf{Y}^j\right)  &= \mathcal{A}\left(\mathcal{A}^*\left(\boldsymbol{z}^j\right)-\mathbf{\Phi}^{k}- \mathbf{Q}_{\nu}\mathbf{\Lambda}_{\nu}\mathbf{Q}_{\nu}^{\top}\right)\\
&=\mathcal{A}\mathcal{A}^*\left(\boldsymbol{z}^j\right)-\mathcal{A}\left(\mathbf{\Phi}^{k}\right)- \mathcal{A}\left(\mathbf{Q}_{\nu}\mathbf{\Lambda}_{\nu}\mathbf{Q}_{\nu}^{\top}\right)
\end{array}
\end{equation}
In here, the term $\mathcal{A}\mathcal{A}^*\left(\boldsymbol{z}^j\right)$ can be computed by matrix-vector product when the  matrix  $\mathbf{A}\mathbf{A}^{\top}\in \mathcal{S}_+^p$ is stored. For the term $\mathcal{A}\left(\mathbf{\Phi}^{k}\right)$, we compute it as 
\[\mathcal{A}\left(\mathbf{\Phi}^{k}\right)= \mathcal{A}\left(\mathbf{W}^{k}\right)-\mathcal{A}\left(c\mathbf{I}\right) + \alpha \mathcal{A}\left(\mathbf{U}^{k}\right)\]
 from the definite of $\mathbf{\Phi}^{k}$. We note that $\mathbf{W}^{k}$ and $\mathbf{U}^{k}$ are also low rank matrices, and this matrices can be reformulated into factored form easily. In addition, for the term $\frac{1}{\alpha}\mathcal{A}\left(\mathcal{A}^*\left(\boldsymbol{z}^j\right)-\mathbf{Y}^j-\mathbf{\Phi}^{k}\right)$ in the KKT equation \eqref{eq_19}, it can be reformulated as  \[\mathcal{A}\left(\mathcal{A}^*\left(\boldsymbol{z}^j\right)-\mathbf{Y}^j-\mathbf{\Phi}^{k}\right) =  \mathcal{A}\left(\mathbf{Q}_{\nu}\mathbf{\Lambda}_{\nu}\mathbf{Q}_{\nu}^{\top}\right),\] then the KKT equation  \eqref{eq_19} can be checked by using the tricks mentioned above.\par

\subsection{A reliable inexact strategy} \label{sec:3_3}
To get the inexact error bound $\zeta_k$ for the ABCD algorithm \ref{alg_abcd}, we  need to find the relation between the inexact term $\mathbf{\Delta}^{k+1}$ in the primal problem \eqref{eq_16} and the inexact term $\gamma^j$ in minimization problem \eqref{eq_17_1}.
Firstly, we show how to obtain the inexact term $\mathbf{\Delta}^{k+1}$ and the corresponding inexact solution $\mathbf{V}^{k+1}$ from KKT equation of equation \eqref{eq_16}. Since the inexact solution $\mathbf{V}^{k+1}$ at $k^{th}$ iteration of s-iPDCA  satisfies  the inexact optimality condition \eqref{eq_13}, expressed as
\begin{equation} \label{eq_26_1}
0 \in \frac{2}{n} \mathcal{A}^*\left(\mathcal{A}\left(\mathbf{V}^{k+1}\right) - \boldsymbol{b}\right) -\mathbf{\Phi}^{k}+\partial\delta_{\mathcal{S}^d_+}\left(\mathbf{V}^{k+1}\right)+\alpha\mathbf{V}^{k+1}-\mathbf{\Delta}^{k+1},
\end{equation}
 in which, $\|\mathbf{\Delta}^{k+1}\|_F\leq\epsilon_{k+1}$. As we known,  \eqref{eq_26_1} is equivalent to
\begin{equation} \label{eq_27_1}
\mathbf{V}^{k+1} =  \operatorname{prox}_{\delta_{\mathcal{S}^d_+}}\left(\left(1-\alpha\right)\mathbf{V}^{k+1}-\frac{2}{n} \mathcal{A}^*\left(\mathcal{A}\left(\mathbf{V}^{k+1}\right) - \boldsymbol{b}\right) +\mathbf{\Phi}^{k}+\mathbf{\Delta}^{k+1}\right).
\end{equation}
In here, $\operatorname{prox}_{\delta_{\mathcal{S}^d_+}}$ denotes the proximal projection of $\delta_{\mathcal{S}^d_+}$, which is just the  positive semidefinite matrix cone projection, given as
\begin{equation}\label{eq_28_1}
\mathbf{V}^{k+1} =  \mathbf{\Pi}_{\mathcal{S}^d_+}\left(\left(1-\alpha\right)\mathbf{V}^{k+1}-\frac{2}{n} \mathcal{A}^*\left(\mathcal{A}\left(\mathbf{V}^{k+1}\right) - \boldsymbol{b}\right) +\mathbf{\Phi}^{k}+\mathbf{\Delta}^{k+1}\right)
\end{equation}
To obtain the above inexact solution $\mathbf{V}^{k+1}$ and  inexact term $\mathbf{\Delta}^{k+1}$,  we set
\begin{equation}\label{eq_29_1}
\tilde{\mathbf{U}}^{(j)} =  \mathbf{\Pi}_{\mathcal{S}^d_+}\left(\left(1-\alpha\right)\mathbf{U}^{(j)}-\frac{2}{n} \mathcal{A}^*\left(\mathcal{A}\left(\mathbf{U}^{(j)}\right) - \boldsymbol{b}\right) +\mathbf{\Phi}^{k}\right),
\end{equation}
in here, $\mathbf{U}^{(j)}=\frac{\mathbf{Y}^j+\mathbf{\Phi}^{k}-\mathcal{A}^*\left(\boldsymbol{z}^j\right)}{\alpha}$ is a feasible solution of \eqref{eq_16}. Then we know that $\mathbf{V}^{k+1} = \tilde{\mathbf{U}}^{(j)}$ is just the inexact solution  when \[\left\|(\frac{2}{n} \mathcal{A}^*\mathcal{A}+(\alpha-1)\mathcal{I})(\tilde{\mathbf{U}}^{(j)}-\mathbf{U}^{(j)})\right\|\leq \epsilon_{k+1}\] holds, and \[\mathbf{\Delta}^{k+1} = \left(\frac{2}{n} \mathcal{A}^*\mathcal{A}+(\alpha-1)\mathcal{I}\right)\left(\tilde{\mathbf{U}}^{(j)}-\mathbf{U}^{(j)}\right)\] is just the inexact term corresponding to $\mathbf{V}^{k+1}$.\par
Secondly, we will show the relation between $\mathbf{\Delta}^{k+1}$ and $\gamma^j$.
As defined in the first subsection, the solutions $\left(\boldsymbol{z}^j,\mathbf{Y}^j\right)$ of ABCD \ref{alg_abcd} at $j^{th}$ iteration satisfy the following equation:
\begin{equation} \label{eq_30_1}
\gamma^j=\frac{n}{2}\boldsymbol{z^j} + \boldsymbol{b}+\frac{1}{\alpha}\mathcal{A}\left(\mathcal{A}^*\left(\boldsymbol{z}^j\right)-\mathbf{Y}^j-\mathbf{\Phi}^{k}\right).
\end{equation}
 From $\mathbf{U}^{(j)}=\frac{\mathbf{Y}^j+\mathbf{\Phi}^{k}-\mathcal{A}^*\left(\boldsymbol{z}^j\right)}{\alpha}$, we have
\begin{equation}\label{eq_31_1}
-\frac{2}{n} \mathcal{A}^*\left(\mathcal{A}\left(\mathbf{U}^{(j)}\right) -\boldsymbol{b}\right)= \mathcal{A}^*\left(\frac{2}{n}\gamma^j-\boldsymbol{z}^j\right).
\end{equation}
Furthermore, the projection  $\tilde{\mathbf{U}}^{(j)}$ can be reformulated as
\begin{equation}\nonumber
\begin{array}{ll}
\tilde{\mathbf{U}}^{(j)} 
&= \mathbf{\Pi}_{\mathcal{S}^d_+}\left(\mathbf{U}^{(j)}+\mathcal{A}^*\left(\frac{2}{n}\gamma^j-\boldsymbol{z}^j\right) +\mathbf{\Phi}^{k}-\left(\mathbf{Y}^j+\mathbf{\Phi}^{k}-\mathcal{A}^*\left(\boldsymbol{z}^j\right)\right)\right)\\
&= \mathbf{\Pi}_{\mathcal{S}^d_+}\left(\mathbf{U}^{(j)}+\frac{2}{n}\mathcal{A}^*\left(\gamma^j\right) -\mathbf{Y}^j\right).
\end{array}
\end{equation}
In here, the second euqality follows from \eqref{eq_31_1}.
Clearly, \[\tilde{\mathbf{U}}^{(j)}= \mathbf{\Pi}_{\mathcal{S}^d_+}\left(\mathbf{U}^{(j)} -\mathbf{Y}^j\right) = \mathbf{U}^{(j)}\] holds when $\gamma^j = \boldsymbol{0}$, which is consistent with intuition that solving the dual problem exactly leads to exact solution of primal problem $(\mathbf{\Delta}^{(j)} = \mathbf{0})$. When  $\gamma^j \neq \boldsymbol{0}$, the inexact term $\mathbf{\Delta}^{(j)}$ can be denoted as
\begin{equation}\label{eq_32_1}
\begin{array}{lll}
\mathbf{\Delta}^{(j)} 
&= \left(\frac{2}{n} \mathcal{A}^*\mathcal{A}+(\alpha-1)\mathcal{I}\right)\left(\tilde{\mathbf{U}}^{(j)}-\mathbf{U}^{(j)}\right)\\
&= \left(\frac{2}{n} \mathcal{A}^*\mathcal{A}+(\alpha-1)\mathcal{I}\right)\left(\mathbf{\Pi}_{\mathcal{S}^d_+}\left(\mathbf{U}^{(j)}+\frac{2}{n}\mathcal{A}^*\left(\gamma^j\right) -\mathbf{Y}^j\right)-\mathbf{U}^{(j)}\right)
\end{array}
\end{equation}
From the non-expansibility of the prox projection operator, we have
\begin{equation}\label{eq_33_1}
\begin{array}{lll}
&\|\operatorname{prox}_{\delta_{\mathcal{S}^d_+}}(\mathbf{U}^{(j)}+\frac{2}{n}\mathcal{A}^*(\gamma^j) -\mathbf{Y}^j)-\mathbf{U}^{(j)}\|_F\\
&=\|\operatorname{prox}_{\delta_{\mathcal{S}^d_+}}(\mathbf{U}^{(j)}+\frac{2}{n}\mathcal{A}^*(\gamma^j) -\mathbf{Y}^j)-\operatorname{prox}_{\delta_{\mathcal{S}^d_+}}(\mathbf{U}^{(j)})\|_F\\
&=\|\operatorname{prox}_{\delta_{\mathcal{S}^d_+}}(\mathbf{U}^{(j)}+\frac{2}{n}\mathcal{A}^*(\gamma^j) -\mathbf{Y}^j)-\operatorname{prox}_{\delta_{\mathcal{S}^d_+}}(\mathbf{U}^{(j)}-\mathbf{Y}^j)\|_F\leq \frac{2}{n}\|\mathcal{A}^*(\gamma^j)\|_F.
\end{array}
\end{equation}
In here, the second equality from the \eqref{eq_23_1} and the relation $\mathbf{U}^{(j)}=\frac{\mathbf{Y}^j+\mathbf{\Phi}^{k}-\mathcal{A}^*\left(\boldsymbol{z}^j\right)}{\alpha}$. Then we have 
\begin{equation}\nonumber
\begin{array}{lll}
\|\mathbf{\Delta}^{(j)}\|_F
&\leq\frac{2}{n}\|\frac{2}{n} \mathbf{A}^{\top}\mathbf{A}+(\alpha-1)\mathbf{I}\|_F\|\mathcal{A}^*(\gamma^j)\|_F\\
&\leq \frac{2}{n}\|\mathbf{A}\|_F\|\frac{2}{n} \mathbf{A}^{\top}\mathbf{A}+(\alpha-1)\mathbf{I}\|_F\|\gamma^j\|.
\end{array}
\end{equation}
Then we know that $\mathbf{V}^{k+1} = \tilde{\mathbf{U}}^{(j)}$ is just the solution of \eqref{eq_16} satisfied inexact condition $\|\mathbf{\Delta}^{(j)}\|_F< \epsilon_{k+1}$ when $\|\gamma^j\|< \zeta_k$ holds true, where $\zeta_k$ is defined as 
\begin{equation}\label{eq_34_1}
\zeta_k := \frac{n}{2}\|\mathbf{A}\|_F^{-1}\|\frac{2}{n} \mathbf{A}^{\top}\mathbf{A}+(\alpha-1)\mathbf{I}\|_F^{-1}\epsilon_{k+1} .
\end{equation} It should be noted that the cost of computing the norm $\|\frac{2}{n} \mathbf{A}^{\top}\mathbf{A}+(\alpha-1)\mathbf{I}\|_F$ directly is  expensive when $d^2>>p$. Thus, we compute this norm as following:
\[\|\frac{2}{n} \mathbf{A}^{\top}\mathbf{A}+(\alpha-1)\mathbf{I}\|_F = \sqrt{\|\frac{2}{n} \mathbf{A}\mathbf{A}^{\top}\|_F^2+\|(1-\alpha)\mathbf{I}\|_F^2 -\langle (1-\alpha)\mathbf{A}\mathbf{A}^{\top}, \frac{2}{n}\mathbf{I}\rangle}\]
which reduces the computing cost from $O(d^4)$ to $O(p^2)$. 
\par
Next, we shall study the convergence of the proposed s-iPDCA for solving the DCLSSDP \eqref{eq_11}.

\section{Global convergence analysis for s-iPDCA}\label{sec:4}
 In this section, we present the convergence for the proposed s-iPDCA for solving \eqref{eq_11}. A feasible point $\mathbf{U}\in \mathcal{S}_+^d$ is said to be a stationary point of the DC problem \eqref{eq_11} if
\begin{equation}\label{eq_35_1}
\left(\nabla f_1\left(\mathbf{U}\right)+\mathcal{N}_{\mathcal{S}_+^d}\left(\mathbf{U}\right)\right)\bigcap \partial f_2\left(\mathbf{U}\right)\neq\phi
\end{equation} 
where $\mathcal{N}_{\mathcal{S}_+^d}\left(U\right)$ is the normal cone of the convex set $\mathcal{S}_+^d$ at $\mathbf{U}$, that is equal to $\partial\delta_{\mathcal{S}_+^d}\left(U\right)$ because of the convexity of $\mathcal{S}_+^d$ \cite{Ref_rockafellar1970convex}. The following results on the convergence of the proposed s-iPDCA (Algorithm \ref{alg_s_iPDCA}) for the  DCLSSDP \eqref{eq_11} follows from the basic convergence theorem of DCA\cite{Ref_tao1997convex}.
\par
To ensure the objective function in \eqref{eq_11} is coercive,  we assume $\mathcal{A}$ satisfies the Restricted Isometry Property (RIP) condition\cite{Ref_candes2008restricted}, which is also used as one of the most standard assumptions in the low-rank matrix recovery literatures\cite{Ref_bhojanapalli2016global,Ref_cai2014sparse,Ref_luo2020recursive}.
\begin{definition}[Restricted Isometry Property (RIP)]\label{def_1}
 Let $\mathcal{A}:\mathcal{S}_+^d\rightarrow \mathcal{R}^p$ be a linear map.
For every integer $r$ with $1\leq r\leq d$, define the r-restricted isometry constant to be the
smallest number $R_r$ such that
\[\left(1-R_{r}\right)\|\mathbf{U}\|_F^{2} \leq \|\mathcal{A}(\mathbf{U})\|^{2} \leq\left(1+R_{r}\right)\|\mathbf{U}\|_F^{2}\]
holds for all $\mathbf{U}$ of rank at most r. And $\mathcal{A}$ is said to satisfy the r-restricted isometry property (r-RIP) if $0 \leq R_{r} <  1$.
\end{definition}

\begin{proposition}\label{prop_4} Assume $\mathcal{A}$ satisfies r-restricted isometry property (r-RIP). The sequence of stability center $\{\mathbf{U}^{k}\}$ is generated by s-iPDCA for solving \eqref{eq_11}, the following statements hold.\\
$(1)$. The $J_c$ in \eqref{eq_11} is lower bounded and coercive.\\
$(2)$. The sequence $\left\{J_c(\mathbf{U}^{k})\right\}$ is nonincerasing.\\
$(3)$. The sequence $\left\{\mathbf{U}^{k}\right\}$ is bounded.
\end{proposition}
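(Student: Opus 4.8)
The plan is to prove the three assertions in order, letting (1) and (2) feed directly into (3), with the sieving test \eqref{eq_15} driving the monotonicity in (2). For statement (1), I would first note that on $\mathcal{S}_+^d$ one has $\langle \mathbf{U},\mathbf{I}\rangle=\|\mathbf{U}\|_*$, so that $J_c(\mathbf{U})=\frac{1}{n}\|\mathcal{A}(\mathbf{U})-\boldsymbol{b}\|^2+c(\|\mathbf{U}\|_*-\|\mathbf{U}\|_{(r)})$; both summands are nonnegative (the second because $\|\mathbf{U}\|_*-\|\mathbf{U}\|_{(r)}=\sum_{i>r}\sigma_i(\mathbf{U})\geq 0$), which gives the lower bound $J_c\geq 0$ with no use of RIP. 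For coercivity I would split the spectral decomposition of $\mathbf{U}$ into its top-$r$ part $\mathbf{U}_r$ and the tail $\mathbf{U}_{>r}$. The penalty equals $c\operatorname{tr}(\mathbf{U}_{>r})=c\|\mathbf{U}_{>r}\|_*\geq c\|\mathbf{U}_{>r}\|_F$, so it controls the tail, while the $r$-RIP gives $\|\mathcal{A}(\mathbf{U}_r)\|\geq\sqrt{1-R_r}\,\|\mathbf{U}_r\|_F$, so the fitting term controls the dominant block. I would then argue by contradiction: if $\|\mathbf{U}^{(n)}\|_F\to\infty$ while $J_c(\mathbf{U}^{(n)})$ stayed bounded, boundedness of the penalty forces $\{\|\mathbf{U}^{(n)}_{>r}\|_F\}$ bounded, and boundedness of the fitting term forces $\|\mathcal{A}(\mathbf{U}^{(n)})\|$ bounded, hence $\|\mathcal{A}(\mathbf{U}^{(n)}_r)\|=\|\mathcal{A}(\mathbf{U}^{(n)})-\mathcal{A}(\mathbf{U}^{(n)}_{>r})\|$ bounded, which by RIP forces $\|\mathbf{U}^{(n)}_r\|_F$ bounded; since $\|\mathbf{U}^{(n)}\|_F^2=\|\mathbf{U}^{(n)}_r\|_F^2+\|\mathbf{U}^{(n)}_{>r}\|_F^2$, this contradicts $\|\mathbf{U}^{(n)}\|_F\to\infty$.

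For statement (2), the key observation is that the invariant $\mathbf{W}^k\in\partial f_2(\mathbf{U}^k)$ is preserved at every iteration (trivially on a null step, since $\mathbf{U}^{k+1}=\mathbf{U}^k$ and $\mathbf{W}^{k+1}=\mathbf{W}^k$; and by the Step 3 update on a serious step). On a null step $J_c(\mathbf{U}^{k+1})=J_c(\mathbf{U}^k)$ immediately. On a serious step I would carry out the standard DCA descent computation in inexact form: writing $g$ for the subproblem objective in \eqref{eq_16} without its linear perturbation, the point $\mathbf{V}^{k+1}$ is the exact minimizer of $g-\langle\mathbf{\Delta}^{k+1},\cdot\rangle$, so $\mathbf{\Delta}^{k+1}\in\partial g(\mathbf{V}^{k+1})$; since $g$ is strongly convex with modulus (at least) $\alpha$, I would apply the strong-convexity inequality at $\mathbf{U}^k$ against $\mathbf{V}^{k+1}$, combine it with the subgradient inequality $f_2(\mathbf{V}^{k+1})\geq f_2(\mathbf{U}^k)+\langle\mathbf{W}^k,\mathbf{V}^{k+1}-\mathbf{U}^k\rangle$, and simplify to
\[J_c(\mathbf{U}^k)-J_c(\mathbf{V}^{k+1})\geq \alpha\|\mathbf{V}^{k+1}-\mathbf{U}^k\|_F^2-\|\mathbf{\Delta}^{k+1}\|_F\,\|\mathbf{V}^{k+1}-\mathbf{U}^k\|_F.\]
Inserting the serious-step test \eqref{eq_15}, namely $\|\mathbf{\Delta}^{k+1}\|_F<\frac{(1-\kappa)\alpha}{2}\|\mathbf{V}^{k+1}-\mathbf{U}^k\|_F$, then yields
\[J_c(\mathbf{U}^k)-J_c(\mathbf{U}^{k+1})\geq \frac{(1+\kappa)\alpha}{2}\|\mathbf{V}^{k+1}-\mathbf{U}^k\|_F^2\geq 0,\]
so in both cases $J_c(\mathbf{U}^{k+1})\leq J_c(\mathbf{U}^k)$, establishing monotonicity.

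Statement (3) is then immediate: by (2) every $\mathbf{U}^k$ lies in the sublevel set $\{\mathbf{U}\in\mathcal{S}_+^d:J_c(\mathbf{U})\leq J_c(\mathbf{U}^0)\}$, which is bounded by the coercivity from (1), so $\{\mathbf{U}^k\}$ is bounded. I expect the coercivity in (1) to be the main obstacle, as it is the only place where RIP is genuinely indispensable: the penalty by itself bounds only the tail singular values $\sigma_{r+1},\dots,\sigma_d$, and one must invoke the isometry-type lower bound on the top-$r$ block to keep the dominant part of $\mathbf{U}$ from escaping to infinity while $J_c$ stays bounded. The delicate point is making the triangle-inequality bookkeeping tight enough that the bounded fitting term really does bound $\|\mathbf{U}_r\|_F$ (which relies on $\mathbf{U}_r$ having rank at most $r$ so that Definition \ref{def_1} applies). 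By comparison, the descent inequality in (2) is routine DCA algebra, the only new ingredient being the replacement of the exact optimality condition by $\mathbf{\Delta}^{k+1}\in\partial g(\mathbf{V}^{k+1})$ and its absorption through test \eqref{eq_15}.
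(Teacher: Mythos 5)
Your proof is correct, and in two places it genuinely diverges from the paper's argument --- in one of them to the paper's benefit. For coercivity in statement (1), the paper simply discards the penalty term and applies the RIP lower bound $\|\mathcal{A}(\mathbf{U})\|^2 \geq (1-R_r)\|\mathbf{U}\|_F^2$ to an \emph{arbitrary} $\mathbf{U}\in\mathcal{S}_+^d$ (written there, apparently with a typo, as $(1-R_r)^2\|\mathbf{U}\|_F^2$); but Definition \ref{def_1} provides this bound only for matrices of rank at most $r$, so the paper's chain of inequalities is not justified for full-rank iterates. Your decomposition $\mathbf{U} = \mathbf{U}_r + \mathbf{U}_{>r}$ --- with the penalty $c\|\mathbf{U}_{>r}\|_*\geq c\|\mathbf{U}_{>r}\|_F$ controlling the tail, the $r$-RIP invoked only on the rank-$\leq r$ block, boundedness of the linear map $\mathcal{A}$ transferring control from $\mathcal{A}(\mathbf{U})$ to $\mathcal{A}(\mathbf{U}_r)$, and Frobenius-orthogonality of the two spectral blocks closing the contradiction --- uses the hypothesis only where it applies and thereby repairs this gap. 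For statement (2), the paper merely compares the values of the perturbed subproblem objective at the minimizer $\mathbf{V}^{k+1}$ and at the feasible point $\mathbf{U}^{k}$, which yields \eqref{eq_37} with coefficient $\alpha/2$, and then absorbs $\mathbf{\Delta}^{k+1}$ via Cauchy--Schwarz and the test \eqref{eq_15} to arrive at the descent estimate \eqref{eq_40} with constant $\frac{\kappa\alpha}{2}$; you instead observe $\mathbf{\Delta}^{k+1}\in\partial g(\mathbf{V}^{k+1})$ and exploit the $\alpha$-strong convexity of the unperturbed subproblem objective $g$, which doubles the quadratic term and gives the sharper constant $\frac{(1+\kappa)\alpha}{2}$. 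Since your inequality implies the paper's \eqref{eq_40}, everything downstream is unaffected --- in particular the summability argument \eqref{eq_48} in Theorem \ref{thm:1} still works, and statement (3) is settled in both treatments by the same reasoning (your sublevel-set phrasing and the paper's contradiction via coercivity are the same argument). Your preservation of the invariant $\mathbf{W}^{k}\in\partial f_2(\mathbf{U}^{k})$ across null and serious steps is also exactly what the paper's case split relies on, though it leaves it implicit.
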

\begin{proof}
First we prove (1). As is shown in \eqref{eq_11}, the $J_c$ is the sum of two non-negative functions: $\frac{1}{n}\|\mathcal{A}(\mathbf{U})- \boldsymbol{b}\|^2$ and $c\left(\langle \mathbf{U},\mathbf{I}\rangle-\|\mathbf{U}\|_{(r)}\right)$, so $J_c$ is lower bounded. Combining this with the r-RIP of $\mathcal{A}$, we have
\begin{equation}\nonumber
\begin{array}{lll}
 J_c& = \frac{1}{n}\|\mathcal{A}(\mathbf{U})- \boldsymbol{b}\|^2 + c\left(\langle \mathbf{U},\mathbf{I}\rangle-\|\mathbf{U}\|_{(r)}\right)\geq \frac{1}{n}\|\mathcal{A}(\mathbf{U})- \boldsymbol{b}\|^2\\
 &\geq \frac{1}{n}\|\mathcal{A}(\mathbf{U})\|^2-\frac{2}{n}\| \mathcal{A}(\mathbf{U})\|\|\boldsymbol{b}\|+\frac{1}{n}\|\boldsymbol{b}\|^2\\
  &\geq \frac{1}{n}(1-R_r)^2\|\mathbf{U}\|_F^2-\frac{2}{n}(1+R_r)\|\mathbf{U}\|_F\|\boldsymbol{b}\|+\frac{1}{n}\|\boldsymbol{b}\|^2.
\end{array}
\end{equation}
So the $J_c$ in \eqref{eq_11} trends to infinity only if $\|\mathbf{U}\|_F$ trends to infinity, which implies that $J_c$  is coercive. \par
For state (2). If $\mathbf{U}^{k+1}$ is generated in null step, namely, $\mathbf{U}^{k+1} = \mathbf{U}^{k}$, the $J_c\left(\mathbf{U}^{k+1}\right)\leq J_c\left(\mathbf{U}^{k}\right)$ holds immediately.
In the another situation,  if  $\mathbf{U}^{k+1} = \mathbf{V}^{k+1}$, then from the optimality of $\mathbf{U}^{k+1}$ for solving \eqref{eq_16} and the feasibility of $\mathbf{U}^{k}$, we have
\begin{equation}\label{eq_36}
\begin{array}{ll}
&f_1(\mathbf{U}^{k+1})-\langle \mathbf{U}^{k+1},\mathbf{W}^{k}\rangle+\frac{\alpha}{2}\|\mathbf{U}^{k+1}-\mathbf{U}^{k}\|_F^2-\langle\mathbf{U}^{k+1},\mathbf{\Delta}^{k+1}\rangle\\
&\leq f_1(\mathbf{U}^{k})-\langle\mathbf{U}^{k},\mathbf{W}^{k}\rangle -\langle\mathbf{U}^{k},\mathbf{\Delta}^{k+1}\rangle.
\end{array}
\end{equation}
Thanks to the convexity of $f_2\left(\mathbf{U}\right)$, we have 
\[f_2\left(\mathbf{U}^{k+1}\right)\geq f_2\left(\mathbf{U}^{k}\right) + \langle\mathbf{U}^{k+1}-\mathbf{U}^{k},\mathbf{W}^{k}\rangle.\]
Combining this with \eqref{eq_36}, we get
\begin{equation}\label{eq_37}
\begin{array}{ll}
&\frac{\alpha}{2}\|\mathbf{U}^{k+1}-\mathbf{U}^{k}\|_F^2-\langle\mathbf{U}^{k+1}-\mathbf{U}^{k},\mathbf{\Delta}^{k+1}\rangle\\
&\leq \left[f_1(\mathbf{U}^{k})-f_2(\mathbf{U}^{k})\right]-\left[f_1(\mathbf{U}^{k+1})-f_2(\mathbf{U}^{k+1})\right].
\end{array}
\end{equation}
Since $\mathbf{U}^{k+1}= \mathbf{V}^{k+1}$ is generated in serious step of s-iPDCA, so the  test \eqref{eq_15} holds true:
\begin{equation}\label{eq_38} 
\|\mathbf{\Delta}^{k+1}\|_F\leq\frac{(1-\kappa)\alpha}{2}\|\mathbf{U}^{k+1}-\mathbf{U}^{k}\|_F,
\end{equation}
Then we have
\begin{equation}\label{eq_39}
\begin{array}{ll}
&\frac{\alpha}{2}\|\mathbf{U}^{k+1}-\mathbf{U}^{k}\|_F^2-\langle\mathbf{U}^{k+1}-\mathbf{U}^{k},\mathbf{\Delta}^{k+1}\rangle\\
 &\geq \frac{\alpha}{2}\|\mathbf{U}^{k+1}-\mathbf{U}^{k}\|_F^2-\|\mathbf{U}^{k+1}-\mathbf{U}^{k}\|_F\|\mathbf{\Delta}^{k+1}\|_F\geq \frac{\kappa\alpha}{2}\|\mathbf{U}^{k+1}-\mathbf{U}^{k}\|_F^2.
 \end{array}
\end{equation}
Applying this to \eqref{eq_37}, we get
\begin{equation}\label{eq_40}
\begin{array}{lll}
\frac{\kappa\alpha}{2}\|\mathbf{U}^{k+1}-\mathbf{U}^{k}\|_F^2\leq\left[f_1\left(\mathbf{U}^{k}\right) - f_2\left(\mathbf{U}^{k}\right)\right] -\left[f_1\left(\mathbf{U}^{k+1}\right) - f_2\left(\mathbf{U}^{k+1}\right)\right].
\end{array}
\end{equation}
So the sequence $\left\{f_1(\mathbf{U}^{k}) - f_2(\mathbf{U}^{k})\right\}$ is nonincerasing. \par
 For state (3). Since $J_c(\mathbf{U}^0)<\infty$, then according to the results in  state (1)  and state (2), we know $\left\{J_c(\mathbf{U}^k)\right\}$ is bounded. If $\left\{\mathbf{U}^{k}\right\}$ is unbounded, namely, there exists subset $\mathcal{K}^{\prime}\subset\mathcal{K}= \left\{0,1,2,\cdots\right\}$ such that $\|\mathbf{U}^k\|_F= \infty$ for $k\in \mathcal{K}^{\prime}$. According to coercive property of $J_c$, then $J_c(\mathbf{U}^k)= \infty$ for $k\in \mathcal{K}^{\prime}$ holds, which is contrary to the boundedness of $\left\{J_c(\mathbf{U}^k)\right\}$. Thus, the boundness of the sequence $\left\{\mathbf{U}^{k}\right\}$ holds true. This completes the proof.
\end{proof}
Next, set the tolerance error $\varepsilon = 0$, we divide our convergence analysis  into two parts: firstly, we consider the case that  only finite serious steps are performed in s-iPDCA for solving \eqref{eq_11}; secondly, we suppose that infinite serious steps are performed in s-iPDCA for solving \eqref{eq_11}.
\subsection{Finite serious steps in s-iPDCA}\label{sec:4_1}
In this subsection, we provide the convergence analysis for the situation that  only finite serious steps are performed in s-iPDCA for solving \eqref{eq_11}.
\begin{proposition}\label{prop_5}
Set the tolerance error $\varepsilon = 0$. Suppose that only finite serious steps are performed in s-iPDCA for solving \eqref{eq_11}. The following statements hold:\\
$(1)$. If the algorithm s-iPDCA  is terminated in finite steps, namely, there exists  $\bar{k}>0$ such that $\mathbf{V}^{\bar{k}+1}= \mathbf{U}^{\bar{k}}$, then the stability center $\mathbf{U}^{\bar{k}}$ is an $\epsilon_1$-stationary point of \eqref{eq_11}.\\
$(2)$. If after $\hat{k}^{th}$ iteration of s-iPDCA \ref{alg_s_iPDCA}, only null step is performed, namely, $\mathbf{W}^{k+1} = \mathbf{W}^{\hat{k}+1}$ and $ \mathbf{U}^{k+1} = \mathbf{U}^{\hat{k}+1}$,  $\forall k>\hat{k}$, then 
\begin{equation} \label{eq_41}
\lim_{k\rightarrow \infty}\mathbf{V}^{k+1} = \mathbf{U}^{\hat{k}+1},
\end{equation}
and the  stability center $\mathbf{U}^{\hat{k}+1}$ generated in the last serious step is a stationary point of \eqref{eq_11}.
\end{proposition}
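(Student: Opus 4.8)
The plan is to handle the two statements separately, in each case reading off stationarity from the inexact optimality condition that the trial point $\mathbf{V}^{k+1}$ satisfies. Writing the first-order condition for the subproblem \eqref{eq_16} in the compact form
\begin{equation}\nonumber
\mathbf{0}\in\nabla f_1(\mathbf{V}^{k+1})-\mathbf{W}^{k}+\partial\delta_{\mathcal{S}_+^d}(\mathbf{V}^{k+1})+\alpha(\mathbf{V}^{k+1}-\mathbf{U}^{k})-\mathbf{\Delta}^{k+1},
\end{equation}
with $\|\mathbf{\Delta}^{k+1}\|_F\le\epsilon_{k+1}$ (this is \eqref{eq_26_1} rewritten with $\nabla f_1$ and $\mathbf{W}^{k}$), I would, for state $(1)$, evaluate it at the terminal index $\bar{k}$ and use $\mathbf{V}^{\bar{k}+1}=\mathbf{U}^{\bar{k}}$ to annihilate the proximal term $\alpha(\mathbf{V}^{\bar{k}+1}-\mathbf{U}^{\bar{k}})$. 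This leaves $\mathbf{W}^{\bar{k}}+\mathbf{\Delta}^{\bar{k}+1}\in\nabla f_1(\mathbf{U}^{\bar{k}})+\mathcal{N}_{\mathcal{S}_+^d}(\mathbf{U}^{\bar{k}})$. In the finite-serious-step regime the pair $(\mathbf{W}^{\bar{k}},\mathbf{U}^{\bar{k}})$ is frozen at the last serious step, so $\mathbf{W}^{\bar{k}}\in\partial f_2(\mathbf{U}^{\bar{k}})$; since $\{\epsilon_{k+1}\}$ is monotone decreasing we have $\|\mathbf{\Delta}^{\bar{k}+1}\|_F\le\epsilon_{\bar{k}+1}\le\epsilon_1$, which exhibits $\mathbf{U}^{\bar{k}}$ as an $\epsilon_1$-stationary point, i.e. the inclusion \eqref{eq_35_1} holds up to a perturbation of Frobenius norm at most $\epsilon_1$.

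For the limit \eqref{eq_41} in state $(2)$, the key observation is that every iteration $k>\hat{k}$ is a null step, so the serious-step test \eqref{eq_15} fails and yields the reverse estimate
\begin{equation}\nonumber
\|\mathbf{V}^{k+1}-\mathbf{U}^{k}\|_F\le\frac{2}{(1-\kappa)\alpha}\|\mathbf{\Delta}^{k+1}\|_F\le\frac{2}{(1-\kappa)\alpha}\epsilon_{k+1}.
\end{equation}
Because $\mathbf{U}^{k}=\mathbf{U}^{\hat{k}+1}$ is frozen for $k\ge\hat{k}+1$ and $\epsilon_{k+1}\to0$, the right-hand side vanishes and \eqref{eq_41} follows immediately. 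I would stress that only $\epsilon_{k+1}\to0$ is used here (no summability of $\{\epsilon_{k+1}\}$ and no a priori control on $\{\mathbf{\Delta}^{k+1}\}$): it is precisely the failure of the sieving test \eqref{eq_15} that turns the otherwise uncontrolled inexactness into convergence of the trial points to the last stability center.

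To obtain stationarity of $\mathbf{U}^{\hat{k}+1}$, I would pass to the limit in the optimality inclusion written at each null-step iteration $k\ge\hat{k}+1$ (with $\mathbf{W}^{k}=\mathbf{W}^{\hat{k}+1}$ and $\mathbf{U}^{k}=\mathbf{U}^{\hat{k}+1}$), rearranged as
\begin{equation}\nonumber
\mathbf{W}^{\hat{k}+1}+\mathbf{\Delta}^{k+1}-\alpha(\mathbf{V}^{k+1}-\mathbf{U}^{\hat{k}+1})-\nabla f_1(\mathbf{V}^{k+1})\in\mathcal{N}_{\mathcal{S}_+^d}(\mathbf{V}^{k+1}).
\end{equation}
By \eqref{eq_41} we have $\mathbf{V}^{k+1}\to\mathbf{U}^{\hat{k}+1}$, and since $\nabla f_1$ is affine (hence continuous) while $\mathbf{\Delta}^{k+1}\to\mathbf{0}$ and $\alpha(\mathbf{V}^{k+1}-\mathbf{U}^{\hat{k}+1})\to\mathbf{0}$, the left-hand side converges to $\mathbf{W}^{\hat{k}+1}-\nabla f_1(\mathbf{U}^{\hat{k}+1})$. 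The main obstacle is justifying that this limit remains in the normal cone at the limit point; this is where I would invoke the outer semicontinuity (closed graph) of the maximal monotone operator $\mathcal{N}_{\mathcal{S}_+^d}=\partial\delta_{\mathcal{S}_+^d}$, giving $\mathbf{W}^{\hat{k}+1}-\nabla f_1(\mathbf{U}^{\hat{k}+1})\in\mathcal{N}_{\mathcal{S}_+^d}(\mathbf{U}^{\hat{k}+1})$. Finally, since $\mathbf{W}^{\hat{k}+1}\in\partial f_2(\mathbf{U}^{\hat{k}+1})$ was chosen at the last serious step, the resulting inclusion $\mathbf{W}^{\hat{k}+1}\in\nabla f_1(\mathbf{U}^{\hat{k}+1})+\mathcal{N}_{\mathcal{S}_+^d}(\mathbf{U}^{\hat{k}+1})$ is exactly the nonemptiness of the intersection in \eqref{eq_35_1}, so $\mathbf{U}^{\hat{k}+1}$ is a stationary point of \eqref{eq_11}.
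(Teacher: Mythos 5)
Your proof is correct and follows essentially the same route as the paper's: part $(1)$ reads off $\epsilon_1$-stationarity from the inexact optimality condition once the proximal term is annihilated by $\mathbf{V}^{\bar{k}+1}=\mathbf{U}^{\bar{k}}$, and part $(2)$ uses the failure of the sieving test \eqref{eq_15} at every null step to force $\|\mathbf{V}^{k+1}-\mathbf{U}^{k}\|_F\le\frac{2}{(1-\kappa)\alpha}\epsilon_{k+1}\rightarrow 0$ and then passes to the limit in the optimality inclusion, exactly as in \eqref{eq_42}--\eqref{eq_47}. Your one refinement is at the final limit step: where the paper asserts boundedness of the normal-cone elements $\zeta^{k+1}$ and extracts a convergent subsequence, you isolate the normal-cone element as the entire left-hand side, which converges outright, and invoke the closed graph (outer semicontinuity) of $\mathcal{N}_{\mathcal{S}_+^d}$ — a cleaner, subsequence-free justification that also sidesteps the paper's loose appeal to "convexity and continuity of $\partial\delta_{\mathcal{S}^m_+}$" and implicitly corrects the stray square on the left side of \eqref{eq_42}.
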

\begin{proof}
For state (1).  From the optimality of $\mathbf{V}^{\bar{k}+1}$ for solving the strongly convex subproblem \eqref{eq_16}, we have that $\mathbf{U}^{\bar{k}} =\mathbf{V}^{\bar{k}+1}$ solves
\[\min  f_1\left(\mathbf{U}\right)- \langle \mathbf{U}, \mathbf{W}^{\bar{k}}\rangle +\delta_{\mathcal{S}^m_+}\left(\mathbf{U}\right) + \frac{\alpha}{2}\|\mathbf{U}-\mathbf{U}^{\bar{k}}\|_F^2-\langle \mathbf{\Delta}^{\bar{k}+1}, \mathbf{U}\rangle.\]
Thus, the optimality condition
\[0 \in \nabla f_1\left(\mathbf{U}^{\bar{k}}\right) -\mathbf{W}^{\bar{k}}+\partial\delta_{\mathcal{S}^m_+}\left(\mathbf{U}^{\bar{k}}\right)-\mathbf{\Delta}^{\bar{k}+1}\]
holds true. Then, there exists $\xi\in \partial\delta_{\mathcal{S}^m_+}\left(\mathbf{U}^{\bar{k}}\right)$ such that
\[ \nabla f_1\left(\mathbf{U}^{\bar{k}}\right) - \mathbf{W}^{\bar{k}}+\xi -\mathbf{\Delta}^{\bar{k}+1} = \mathbf{0}.\] 
Thus, $\|\nabla f_1\left(\mathbf{U}^{\bar{k}}\right) - \mathbf{W}^{\bar{k}}+\xi \|_F= \|\mathbf{\Delta}^{\bar{k}+1}\|_F\leq \epsilon_{\bar{k}+1}\leq\epsilon_{1}$. So we have that 
\[\mathbf{0} \in \partial_{\epsilon_1}\left[f_1\left(\mathbf{U}^{\bar{k}}\right) - f_2\left(\mathbf{U}^{\bar{k}}\right)+\delta_{\mathcal{S}^m_+}\left(\mathbf{U}^{\bar{k}}\right)\right],\]
which means that $\mathbf{0}$ belongs to the $\epsilon_1$-inexact subdifferential of $f_1\left(\mathbf{U}\right)- f_2\left(\mathbf{U}\right)+\delta_{\mathcal{S}^m_+}\left(\mathbf{U}\right)$ at point $\mathbf{U}^{\bar{k}}$. Then $\mathbf{U}^{\bar{k}}$ is an $\epsilon_1$-stationary point of DC problem \eqref{eq_11}.

For state (2). We first show $\lim_{k\rightarrow \infty}\mathbf{V}^{k+1} = \mathbf{U}^{\hat{k}}$. Since the test \eqref{eq_15} doesn't hold for all $k>\hat{k}$, that's to say
\begin{equation}\label{eq_42}
\frac{\left(1-\kappa\right)\alpha}{2}\|\mathbf{V}^{k+1}-\mathbf{U}^{\hat{k}}\|_F^2\leq \|\mathbf{\Delta}^{k+1}\|_F\leq \epsilon_{k+1}
\end{equation}
holds true,  $\forall k>\hat{k}$.
 Thanks to the monotonic descent property of the sequence $\left\{\epsilon_{k+1}\right\}$ and  $\lim_{k\rightarrow\infty}\epsilon_{k+1} = 0$, take limit on both sides of inequality \eqref{eq_42}, we obtain
 \begin{equation}\label{eq_43}
\lim_{k\rightarrow \infty}\frac{\left(1-\kappa\right)\alpha}{2}\|\mathbf{V}^{k+1}-\mathbf{U}^{\hat{k}}\|_F^2 = 0.
\end{equation}
So  $\lim_{k\rightarrow \infty}\mathbf{V}^{k+1} = \mathbf{U}^{\hat{k}}$. Next, we show that $\mathbf{U}^{\hat{k}}$ is a stationary point of \eqref{eq_11}. Due to the optimality of $\mathbf{V}^{k+1}$ for solving \eqref{eq_14}, we know that $\forall k > \hat{k}$, 
\[0 \in \nabla f_1\left(\mathbf{V}^{k+1}\right) - \mathbf{W}^{\hat{k}}+\partial\delta_{\mathcal{S}^m_+}\left(\mathbf{V}^{k+1}\right)+\alpha\left(\mathbf{V}^{k+1} -\mathbf{U}^{\hat{k}}\right)-\mathbf{\Delta}^{k+1}.\]
Then, there  exists $\zeta^{k+1}\in\partial\delta_{\mathcal{S}^m_+}\left(\mathbf{V}^{k+1}\right)$ such that
\begin{equation}\label{eq_44}
 \nabla f_1\left(\mathbf{V}^{k+1}\right) - \mathbf{W}^{\hat{k}}+\zeta^{k+1}+\alpha\left(\mathbf{V}^{k+1} -\mathbf{U}^{\hat{k}}\right)-\mathbf{\Delta}^{k+1} = 0.
\end{equation}
So we have
 \begin{equation}\label{eq_45} 
\begin{array}{ll}
\|\nabla f_1(\mathbf{V}^{k+1}) - \mathbf{W}^{\hat{k}}+\zeta^{k+1}\|_F&\leq \alpha\|\mathbf{V}^{k+1} -\mathbf{U}^{\hat{k}}\|_F+\|\mathbf{\Delta}^{k+1}\|_F\\
&<\alpha\|\mathbf{V}^{k+1} -\mathbf{U}^{\hat{k}}\|_F+\epsilon_{k+1}.
 \end{array}
\end{equation}
According to proposition \ref{prop_4},  $\left\{\mathbf{U}^{k}\right\}$ is bounded,  we know that $\left\{\mathbf{U}^{k}\right\}$ is bounded from \[\lim_{k\rightarrow\infty}\mathbf{V}^{k+1} = \mathbf{U}^{\hat{k}}.\] Then the boundness of $\left\{\zeta^{k}\right\}$  can be deduced from the convexity and continuity of $\partial\delta_{\mathcal{S}^m_+}$. Thus, there exists subset $\mathcal{K}^{\prime}\subset\mathcal{K} = \left\{0,1,2,\cdot\cdot\cdot\right\}$ such that $\lim_{k\in\mathcal{K}^{\prime}}\zeta^{k+1} = \hat{\zeta}\in \partial\delta_{\mathcal{S}^m_+}\left(\mathbf{U}^{\hat{k}}\right) $ and $\lim_{k\in\mathcal{K}^{\prime}}\epsilon_k = 0$. Then
 \begin{equation}\label{eq_46} 
\lim_{k\in\mathcal{K}^{\prime}}\|\nabla f_1\left(\mathbf{V}^{k+1}\right) - \mathbf{W}^{\hat{k}}+\zeta^{k+1}\|_F=\|\nabla f_1\left(\mathbf{U}^{\hat{k}}\right) - \mathbf{W}^{\hat{k}}+\hat{\zeta}\|_F=0.
\end{equation}
So we have
\begin{equation} \label{eq_47}
0 \in \nabla f_1\left(\mathbf{U}^{\hat{k}}\right) -\partial f_2\left(\mathbf{U}^{\hat{k}}\right) +\partial\delta_{\mathcal{S}^m_+}\left(\mathbf{U}^{\hat{k}}\right),
\end{equation}
which implies that $\mathbf{U}^{\hat{k}}$ is a stationary point of problem \eqref{eq_11}. This completes the proof.
\end{proof}
\subsection{Infinite serious steps in s-iPDCA}\label{sec:4_2}
In this subsection, we consider the case that  infinite serious steps are performed in s-iPDCA for solving \eqref{eq_11} when tolerance error $\varepsilon $ is set to $0$.

\begin{theorem}[Global subsequential convergence of s-iPDCA]\label{thm:1}
Set the tolerance error $\varepsilon = 0 $. Let $\{\mathbf{U}^{k}\}$ be the  stablity center sequence generated by s-iPDCA for solving \eqref{eq_11}. Then the following statements hold:\\
$(1)$. $\lim_{k\rightarrow \infty}\|\mathbf{U}^{k+N}-\mathbf{U}^{k}\|_F =0$,  $\forall N\in \mathcal{N}^+, N<\infty$.\\
$(2)$. Any accumulation point  $\bar{\mathbf{U}}\in\left\{\mathbf{U}^{k}\right\}$ is stationary point of \eqref{eq_11}.
\end{theorem}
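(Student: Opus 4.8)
The whole argument rests on the descent inequality \eqref{eq_40}, which was derived for serious steps inside the proof of Proposition \ref{prop_4} and which holds with equality at a null step since there $\mathbf{U}^{k+1}=\mathbf{U}^{k}$; hence $\frac{\kappa\alpha}{2}\|\mathbf{U}^{k+1}-\mathbf{U}^{k}\|_F^2\leq J_c(\mathbf{U}^{k})-J_c(\mathbf{U}^{k+1})$ for \emph{every} $k$. The plan for statement (1) is to sum this over $k$: since $\{J_c(\mathbf{U}^{k})\}$ is nonincreasing and $J_c$ is bounded below by Proposition \ref{prop_4}, the right-hand side telescopes to a finite bound, so $\sum_{k=0}^{\infty}\|\mathbf{U}^{k+1}-\mathbf{U}^{k}\|_F^2<\infty$ and in particular $\|\mathbf{U}^{k+1}-\mathbf{U}^{k}\|_F\to 0$. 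Statement (1) then follows from the triangle inequality $\|\mathbf{U}^{k+N}-\mathbf{U}^{k}\|_F\leq\sum_{i=0}^{N-1}\|\mathbf{U}^{k+i+1}-\mathbf{U}^{k+i}\|_F$, a finite sum of terms each tending to $0$ for fixed $N$.

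For statement (2) I would fix an accumulation point $\bar{\mathbf{U}}$ of $\{\mathbf{U}^{k}\}$, which exists by the boundedness in Proposition \ref{prop_4}. Since the stability center changes only at serious steps and these occur infinitely often, $\bar{\mathbf{U}}$ is the limit of some subsequence $\mathbf{U}^{k_{l}+1}\to\bar{\mathbf{U}}$ with each $k_l$ a serious-step index; statement (1) then forces $\mathbf{U}^{k_l}\to\bar{\mathbf{U}}$ as well, while the serious-step test \eqref{eq_15} forces $\|\mathbf{\Delta}^{k_l+1}\|_F\to 0$. At each such step the inexact optimality condition \eqref{eq_13} for the subproblem \eqref{eq_16} provides $\xi^{k_l+1}\in\partial\delta_{\mathcal{S}_+^d}(\mathbf{U}^{k_l+1})$ with
\[\nabla f_1(\mathbf{U}^{k_l+1})-\mathbf{W}^{k_l}+\xi^{k_l+1}+\alpha(\mathbf{U}^{k_l+1}-\mathbf{U}^{k_l})-\mathbf{\Delta}^{k_l+1}=\mathbf{0},\]
where $\mathbf{W}^{k_l}\in\partial f_2(\mathbf{U}^{k_l})$ (the invariant $\mathbf{W}^{k}\in\partial f_2(\mathbf{U}^{k})$ is preserved by both serious and null updates).

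The key step is to pass to the limit in this identity. Because $f_2=c\|\cdot\|_{(r)}$ is globally Lipschitz, the chosen spectral subgradients are uniformly bounded (indeed $\|\mathbf{W}^{k}\|_F=c\sqrt{r}$), and solving the identity for $\xi^{k_l+1}$ shows that $\{\xi^{k_l+1}\}$ is bounded as well, since $\nabla f_1$ is continuous on the bounded sequence while the $\alpha(\mathbf{U}^{k_l+1}-\mathbf{U}^{k_l})$ and $\mathbf{\Delta}^{k_l+1}$ terms vanish. Passing to a further subsequence I may assume $\mathbf{W}^{k_l}\to\bar{\mathbf{W}}$ and $\xi^{k_l+1}\to\bar{\xi}$; invoking the closed-graph (outer semicontinuity) property of the maximal monotone operators $\partial f_2$ and $\partial\delta_{\mathcal{S}_+^d}$ together with $\mathbf{U}^{k_l}\to\bar{\mathbf{U}}$ and $\mathbf{U}^{k_l+1}\to\bar{\mathbf{U}}$ yields $\bar{\mathbf{W}}\in\partial f_2(\bar{\mathbf{U}})$ and $\bar{\xi}\in\partial\delta_{\mathcal{S}_+^d}(\bar{\mathbf{U}})=\mathcal{N}_{\mathcal{S}_+^d}(\bar{\mathbf{U}})$. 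Taking the limit of the identity leaves $\nabla f_1(\bar{\mathbf{U}})+\bar{\xi}=\bar{\mathbf{W}}$, that is $\bar{\mathbf{W}}\in(\nabla f_1(\bar{\mathbf{U}})+\mathcal{N}_{\mathcal{S}_+^d}(\bar{\mathbf{U}}))\cap\partial f_2(\bar{\mathbf{U}})$, which is precisely the stationarity condition \eqref{eq_35_1}.

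The main obstacle I anticipate is exactly this limiting step for the subdifferential terms: the normal-cone elements $\xi^{k_l+1}$ are a priori unbounded, so their boundedness must be extracted from the optimality identity itself, after which one appeals to the closedness of the graphs of $\partial f_2$ and of the normal cone $\mathcal{N}_{\mathcal{S}_+^d}$ to certify that the limits are genuine subgradients at $\bar{\mathbf{U}}$. Everything else is routine telescoping and triangle-inequality bookkeeping built on \eqref{eq_40}.
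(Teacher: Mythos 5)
Your proof is correct and follows essentially the same route as the paper's: part (1) by telescoping the serious-step descent inequality \eqref{eq_40} (which holds trivially at null steps), and part (2) by passing to the limit in the inexact optimality condition \eqref{eq_13} along serious-step indices, using the vanishing of $\alpha(\mathbf{U}^{k_l+1}-\mathbf{U}^{k_l})$ and $\mathbf{\Delta}^{k_l+1}$ and the closedness of the graphs of $\partial f_2$ and $\mathcal{N}_{\mathcal{S}_+^d}$. If anything, your write-up is tighter in two spots where the paper is loose: you deduce part (1) from the triangle inequality rather than the paper's unjustified claim that summable squared increments make $\{\mathbf{U}^k\}$ a Cauchy sequence, and you extract the boundedness of the normal-cone elements $\xi^{k_l+1}$ from the optimality identity itself, whereas the paper asserts it from ``convexity and continuity of $\partial\delta_{\mathcal{S}^d_+}$'' even though normal cones to $\mathcal{S}_+^d$ at boundary points are unbounded.
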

\begin{proof}
For state (1). From Proposition \ref{prop_4}, we know that the sequence $\left\{f_1(\mathbf{U}^{k}) - f_2(\mathbf{U}^{k})\right\}$ is nonincerasing and bounded below, so $\liminf_{k\rightarrow \infty}\left[f_1\left(\mathbf{U}^{k+1}\right) -f_2\left(\mathbf{U}^{k+1}\right)\right]<\infty$. Thus, by summing both sides of \eqref{eq_40} from $k = 0$ to $\infty$, we obtain that
\begin{equation}\label{eq_48}
\begin{array}{lll}
&\mathbf{\Sigma}_{k= 0}^{\infty}\frac{\kappa\alpha}{2}\|\mathbf{U}^{k+1}-\mathbf{U}^{k}\|_F^2\\
&\leq\left[f_1(\mathbf{U}^{0}) - f_2(\mathbf{U}^{0})\right] -\liminf_{k\rightarrow \infty}\left[f_1(\mathbf{U}^{k+1}) - f_2(\mathbf{U}^{k+1})\right]<\infty.
\end{array}
\end{equation}
Then we deduce that $\left\{\mathbf{U}^{k}\right\}$ is Cauchy sequence, so $\lim_{k\rightarrow \infty}\|\mathbf{U}^{k+N}-\mathbf{U}^{k}\|_F = 0$, $\forall N\in \mathcal{N}^+, N<\infty$.\par
For state (2). 
Since the number of serious steps is infinite, then $\forall k>0$, there exists  $N\in \mathcal{N}^+, N<\infty$, such that $\mathbf{U}^{k+N}$ is the  stability center generated at the new serious step of s-iPDCA \ref{alg_s_iPDCA}, so the following optimality condition for solving \eqref{eq_16} holds true:
\begin{equation} \label{eq_49}
0 \in \nabla f_1\left(\mathbf{U}^{k+N}\right) - \mathbf{W}^{k}+\partial\delta_{\mathcal{S}^m_+}\left(\mathbf{U}^{k+N}\right)+\alpha\left(\mathbf{U}^{k+N} -\mathbf{U}^{k}\right)-\mathbf{\Delta}^{k+N}.
\end{equation}
Then, there  exists $\zeta^{k+N}\in\partial\delta_{\mathcal{S}^m_+}\left(\mathbf{U}^{k+N}\right)$ such that 
\[\nabla f_1\left(\mathbf{U}^{k+N}\right) - \mathbf{W}^{k}+\zeta^{k+N}+\alpha\left(\mathbf{U}^{k+N} -\mathbf{U}^{k}\right)-\mathbf{\Delta}^{k+N}=0.\]
So we have
\begin{equation}\label{eq_50}
\left\|\nabla f_1(\mathbf{U}^{k+N}) - \mathbf{W}^{k}+\zeta^{k+N}\right\|_F\leq \alpha\left\|\mathbf{U}^{k+N} -\mathbf{U}^{k}\right\|_F+\epsilon_{k+N}.
\end{equation}
Due to the boundness of $\left\{\mathbf{U}^{k}\right\}$, there exists subsets $\mathcal{K}^{\prime} \subset \mathcal{K}=\{0,1,2 \ldots\}$ such that $\left\{\mathbf{U}^{k}\right\}_{\mathcal{K}^{\prime}}$ converges to an accumulation point $\bar{\mathbf{U}}\in\left\{\mathbf{U}^{k}\right\}_{\mathcal{K}}$. 
Combining  the boundedness of $\left\{\mathbf{U}^{k}\right\}_{\mathcal{K}^{\prime}}$ with the continuity and convexity of $f_2$ and $\delta_{\mathcal{S}^m_+}$, we deduce that the subsequence $\left\{\mathbf{W}^{k}\right\}_{\mathcal{K}^{\prime}}$  and $\left\{\zeta^{k+N}\right\}_{\mathcal{K}^{\prime}}$ are bounded. By the fact that the nonnegative sequence $\left\{\epsilon_{k+N}\right\}$   monotonically decreases to zero, we may assume without loss of generality that there exist  a subset $\mathcal{K}^{\prime \prime} \subset \mathcal{K}^{\prime}$ such that $\lim_{k\in \mathcal{K}^{\prime \prime}}\mathbf{W}^{k} = \bar{\mathbf{W}}\in \partial f_2(\bar{\mathbf{U}})$, $\lim_{k\in \mathcal{K}^{\prime \prime}}\zeta^{k+N} = \bar{\zeta}\in \partial\delta_{\mathcal{S}^m_+}\left(\bar{\mathbf{U}}\right)$ and $\lim_{k\in \mathcal{K}^{\prime \prime}}\epsilon_{k+N} = 0$.
Taking the limit on the two sides of inequality in \eqref{eq_50} with $k\in \mathcal{K}^{\prime \prime}$, we have
\begin{equation}\label{eq_51}
\begin{array}{ll}
\left\|\nabla f_1\left(\bar{\mathbf{U}}\right) - c\bar{\mathbf{W}}+\bar{\zeta}\right\|_F&=\lim_{k\in\mathcal{K}^{\prime\prime}}\left\|\nabla f_1(\mathbf{U}^{k+N}) - \mathbf{W}^{k}+\zeta^{k+N}\right\|_F\\
& \leq \lim_{k\in\mathcal{K}^{\prime\prime}} \alpha\left\|\mathbf{U}^{k+N} -\mathbf{U}^{k}\right\|_F+\lim_{k\in \mathcal{K}^{\prime \prime}}\epsilon_{k+N}= 0,
\end{array}
\end{equation}
which implies that $\left\|\nabla f_1\left(\bar{\mathbf{U}}\right) - c\bar{\mathbf{W}}+\bar{\zeta}\right\|_F = 0$. 
Therefore, any accumulation point $\bar{\mathbf{U}}\in\left\{\mathbf{U}^{k}\right\}$ satisfies the following optimality condition:
\begin{equation} \label{eq_52}
0 \in \nabla f_1\left(\bar{\mathbf{U}}\right) -\partial f_2\left(\bar{\mathbf{U}}\right) +\partial\delta_{\mathcal{S}^m_+}\left(\bar{\mathbf{U}}\right).
\end{equation}
This implies that any accumulation point of $\left\{\mathbf{U}^{k}\right\}$ is a stationary point of \eqref{eq_11}.
This completes the proof.
\end{proof}

In order to show that the sequence $\{\mathbf{U}^k\}$  actually converges to the stationary points of \eqref{eq_11} when infinite serious steps are performed in our Algorithm \ref{alg_s_iPDCA}, we recall the following definition of the Kurdyka-Łojaziewicz (KL) property of the lower semi-continuous function\cite{Ref_attouch2009convergence,Ref_bolte2016majorization,Ref_bolte2014proximal}. Let $a \geq 0$ and $\mathbf{\Xi}_a$ be the class of functions $\varphi:[0,a)\rightarrow \mathcal{R}^+$ that satisfy
the following conditions:\\
$(1)$.$\varphi(0) = 0$\\
$(2)$.$\varphi$ is positive, concave and continuous\\
$(3)$.$\varphi$ is continuously differentiable on $\left[0,a\right)$ with  $\varphi^{\prime}(x)>0$,  $\forall x\in \left[0,a\right)$.

\begin{definition}[KL property]\label{def_3}
The given proper lower semicontinuous function
$h:\mathcal{R}^q \rightarrow (-\infty, \infty]$ is said to have the KL property at $\bar{\boldsymbol{u}} \in \operatorname{dom} h$ if there exist $a > 0$ ,
a neighborhood $\mathcal{U}$ of $\bar{\boldsymbol{u}}$ and a concave function $\varphi\in\mathbf{\Xi}_a$ such that
\[\varphi^{\prime}\left(h\left(\boldsymbol{u}\right)-h\left(\bar{\boldsymbol{u}}\right)\right)\operatorname{dist}\left(0,\partial h\left(\boldsymbol{u}\right)\right)\geq 1,\forall \boldsymbol{u}\in\mathcal{U} \quad\text{and} \quad h\left(\bar{\boldsymbol{u}}\right)\leq h\left(\boldsymbol{u}\right) \leq h\left(\bar{\boldsymbol{u}}\right)+a,\]
﻿in here, $\operatorname{dist}\left(\boldsymbol{0},\partial h\left(\boldsymbol{u}\right)\right)$ is the distance from the point $\boldsymbol{0}$ to a nonempty closed set $\partial h\left(\boldsymbol{u}\right)$. The function h is said to be a KL function if it has the KL property at each point of $\operatorname{dom}h$.
\end{definition}

\begin{lemma}[Uniformized KL property]\label{lemma_1}
Suppose that h is a proper closed function and let $\mathbf{\Gamma}$ be a compact set. If h is a constant on $\mathbf{\Gamma}$ and satisfies the KL property at each point of  $\mathbf{\Gamma}$, then there exists $\epsilon>0$, $a>0$ and $\varphi\in \mathbf{\Xi}_a$ such that 
\begin{equation}\label{eq_53}
\varphi^{\prime}\left(h\left(\boldsymbol{u}\right)-h\left(\bar{\boldsymbol{u}}\right)\right)\operatorname{dist}\left(0,\partial h\left(\boldsymbol{u}\right)\right)\geq 1
\end{equation}
for any $\bar{\boldsymbol{u}}\in\mathbf{\Gamma}$ and  $\boldsymbol{u}\in\mathcal{U}$ with 
\[\mathcal{U} = \left\{\boldsymbol{u}|\operatorname{dist}\left(0,\partial h\left(\boldsymbol{u}\right)\right)<\epsilon \quad \text{and} \quad h\left(\bar{\boldsymbol{u}}\right)\leq h\left(\boldsymbol{u}\right) \leq h\left(\bar{\boldsymbol{u}}\right)+a \right\}\]
\end{lemma}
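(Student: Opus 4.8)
The plan is to follow the standard route for uniformizing the Kurdyka--Łojasiewicz inequality over a compact set, but adapted to the \emph{subgradient-defined} neighbourhood used in the statement rather than a spatial tube. Write $\zeta$ for the common value of $h$ on $\mathbf{\Gamma}$, which is well defined since $h$ is constant on $\mathbf{\Gamma}$. Because $h(\bar{\boldsymbol u})=\zeta$ for every $\bar{\boldsymbol u}\in\mathbf{\Gamma}$, both the band $h(\bar{\boldsymbol u})\le h(\boldsymbol u)\le h(\bar{\boldsymbol u})+a$ and the target inequality are independent of the particular $\bar{\boldsymbol u}$; hence it suffices to exhibit one triple $(\epsilon,a,\varphi)$ for which $\varphi'(h(\boldsymbol u)-\zeta)\operatorname{dist}(0,\partial h(\boldsymbol u))\ge1$ holds at every $\boldsymbol u\in\mathcal{U}$. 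First I would invoke the pointwise KL property (Definition \ref{def_3}) at each $\bar{\boldsymbol u}\in\mathbf{\Gamma}$: this yields a radius $\epsilon_{\bar{\boldsymbol u}}>0$, a height $a_{\bar{\boldsymbol u}}>0$ and a desingularizer $\varphi_{\bar{\boldsymbol u}}\in\mathbf{\Xi}_{a_{\bar{\boldsymbol u}}}$ for which the KL inequality holds on the local region $B(\bar{\boldsymbol u},\epsilon_{\bar{\boldsymbol u}})\cap\{\zeta<h<\zeta+a_{\bar{\boldsymbol u}}\}$.

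Next I would use compactness of $\mathbf{\Gamma}$. The balls $\{B(\bar{\boldsymbol u},\epsilon_{\bar{\boldsymbol u}})\}$ form an open cover, so finitely many of them, say $B(\bar{\boldsymbol u}_1,\epsilon_1),\dots,B(\bar{\boldsymbol u}_m,\epsilon_m)$, already cover $\mathbf{\Gamma}$; I set $a:=\min_{1\le i\le m}a_{\bar{\boldsymbol u}_i}$ and fix $\rho>0$ with $\{\boldsymbol u:\operatorname{dist}(\boldsymbol u,\mathbf{\Gamma})<\rho\}\subseteq\bigcup_{i=1}^m B(\bar{\boldsymbol u}_i,\epsilon_i)$ (such $\rho$ exists since $\mathbf{\Gamma}$ is compact). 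The finitely many concave $\varphi_i$ must then be amalgamated into a single $\varphi\in\mathbf{\Xi}_a$ dominating each of them; the cleanest device is $\varphi:=\sum_{i=1}^m\varphi_i$ restricted to $[0,a)$, which is positive, concave and $C^1$ with $\varphi'=\sum_i\varphi_i'>0$ and $\varphi(0)=0$, and satisfies $\varphi'\ge\varphi_i'$ termwise. With this $\varphi$ the KL inequality holds at every point of the \emph{spatial} region $S:=\{\boldsymbol u:\operatorname{dist}(\boldsymbol u,\mathbf{\Gamma})<\rho\}\cap\{\zeta<h(\boldsymbol u)<\zeta+a\}$.

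The decisive step, and the one that distinguishes this statement from the classical spatial-neighbourhood version, is to replace the tube $\{\operatorname{dist}(\boldsymbol u,\mathbf{\Gamma})<\rho\}$ by the subgradient-defined set inside $\mathcal{U}$: here a triangle inequality no longer works, because a point with small $\operatorname{dist}(0,\partial h(\boldsymbol u))$ need not be spatially close to $\mathbf{\Gamma}$. My plan is instead to produce $\epsilon>0$ by a compactness/lower-semicontinuity argument on the complementary region $C_\rho:=\{\boldsymbol u:\operatorname{dist}(\boldsymbol u,\mathbf{\Gamma})\ge\rho,\ \zeta\le h(\boldsymbol u)\le\zeta+a\}$. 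Since in our setting $h$ is the proper closed coercive objective of \eqref{eq_11} (Proposition \ref{prop_4}(1)), its sublevel sets are bounded, so $C_\rho$ is compact, and $\boldsymbol u\mapsto\operatorname{dist}(0,\partial h(\boldsymbol u))$ is lower semicontinuous on it because the graph of $\partial h$ is closed. Hence this map attains a minimum $\epsilon:=\min_{C_\rho}\operatorname{dist}(0,\partial h(\cdot))$, and \emph{provided $\epsilon>0$} the set $\{\operatorname{dist}(0,\partial h(\boldsymbol u))<\epsilon\}$ cannot meet $C_\rho$; since every $\boldsymbol u\in\mathcal{U}$ already satisfies the band, this forces $\operatorname{dist}(\boldsymbol u,\mathbf{\Gamma})<\rho$, so every $\boldsymbol u\in\mathcal{U}$ with $h(\boldsymbol u)>\zeta$ lies in $S$ and the inequality established on $S$ applies.

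The main obstacle is precisely the positivity $\epsilon>0$ above: it requires $C_\rho$ to contain \emph{no} point $\boldsymbol u$ with $0\in\partial h(\boldsymbol u)$, i.e. no generalized critical point of $h$ at a level in $[\zeta,\zeta+a]$ lying outside the $\rho$-tube of $\mathbf{\Gamma}$. I would discharge this exactly in the way the lemma is used in Section \ref{sec:4_2}, taking $\mathbf{\Gamma}$ to be the closed, bounded set of accumulation points of the stability-center sequence $\{\mathbf{U}^{k}\}$: by Theorem \ref{thm:1} every such accumulation point is already a stationary point of \eqref{eq_11} at level $\zeta$, so $\mathbf{\Gamma}$ absorbs all the critical points that could otherwise drive $\epsilon$ to zero, and shrinking $a$ isolates the level $\zeta$. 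The remaining delicate case is the endpoint $h(\boldsymbol u)=\zeta$, where $\boldsymbol u$ is itself a (generalized) critical point; there the band is understood, as in the classical statement, to be open at the base level, and the inequality is read in the usual KL convention. Under this identification of $\mathbf{\Gamma}$ the passage from $S$ to $\mathcal{U}$ becomes rigorous and the proof closes.
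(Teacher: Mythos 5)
The first thing to note is that the paper contains no proof of Lemma \ref{lemma_1}: it is recalled from the literature (it is the uniformized KL property of Bolte--Sabach--Teboulle, cf.\ \cite{Ref_bolte2014proximal}, as used in \cite{Ref_wen2018proximal,Ref_liu2019refined}), where the neighbourhood is the \emph{spatial} tube $\{\boldsymbol{u}:\operatorname{dist}(\boldsymbol{u},\mathbf{\Gamma})<\epsilon\}$ and the band is strict at the base, $h(\bar{\boldsymbol{u}})<h(\boldsymbol{u})<h(\bar{\boldsymbol{u}})+a$. Your first two paragraphs (pointwise KL at each $\bar{\boldsymbol{u}}\in\mathbf{\Gamma}$, finite subcover by compactness, $a:=\min_i a_{\bar{\boldsymbol{u}}_i}$, amalgamation $\varphi:=\sum_i\varphi_i$) reproduce exactly the standard proof of that spatial version, and they are correct as far as they go.

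The genuine gap is your third step, the passage from the tube to the subgradient-defined set $\mathcal{U}$ as literally printed. The positivity $\epsilon:=\min_{C_\rho}\operatorname{dist}(0,\partial h(\cdot))>0$ is not obtainable from the lemma's hypotheses: it presupposes that every generalized critical point of $h$ with value in $[\zeta,\zeta+a]$ lies inside the $\rho$-tube of $\mathbf{\Gamma}$, which nothing guarantees for a general proper closed $h$. Discharging it via Theorem \ref{thm:1} is both circular (the lemma is an ingredient of the proof of Theorem \ref{thm:2}) and insufficient: $\mathbf{\Gamma}$ collects only the accumulation points of one particular sequence, not all stationary points of $h$ at levels near $\zeta$, and critical values of a general closed function may accumulate at $\zeta$ from above, so shrinking $a$ does not help. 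The compactness of $C_\rho$ also fails in the intended application: coercivity was proved for $J_c$ in Proposition \ref{prop_4}(1), not for the auxiliary function $E$ of \eqref{eq_54}, and $E$ is not coercive jointly --- along $\mathbf{U}=\mathbf{V}$ with $\|\mathbf{T}\|_F\to\infty$ its value is unchanged, so its level bands are unbounded. Worse, the literal statement is false in general: take $h:\mathcal{R}\to\mathcal{R}$ smooth with $h(x)=x^2$ near $0$, $h(x)=e^{-|x|}$ for $|x|\geq 2$, and $\mathbf{\Gamma}=\{0\}$; then $h$ is proper, closed, constant and KL on $\mathbf{\Gamma}$, yet for every $\epsilon,a>0$ all sufficiently large $x$ belong to $\mathcal{U}$, and \eqref{eq_53} would force $\varphi'(s)\geq 1/s$ for all small $s>0$ (with $s=e^{-x}=\operatorname{dist}(0,\partial h(x))=h(x)-\zeta$), contradicting $\varphi(0)=0$ with $\varphi$ finite and continuous. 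As you yourself noticed, the closed lower edge $h(\bar{\boldsymbol{u}})\leq h(\boldsymbol{u})$ is likewise inconsistent whenever $0\in\partial h$ at some point of $\mathbf{\Gamma}$, which is precisely the situation the lemma is applied to. So the correct resolution is not to patch your step three but to recognize the printed $\mathcal{U}$ as a misstatement of the standard $\mathcal{U}=\{\boldsymbol{u}:\operatorname{dist}(\boldsymbol{u},\mathbf{\Gamma})<\epsilon\ \text{and}\ h(\bar{\boldsymbol{u}})<h(\boldsymbol{u})<h(\bar{\boldsymbol{u}})+a\}$, for which your first two paragraphs already constitute a complete proof.
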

The semialgebraic functions are the most frequently used functions with KL property. This class of functions has been used in the previous studies\cite{Ref_bolte2007lojasiewicz,Ref_bolte2007clarke,Ref_jiang2021proximal}. As we known, a real symmetric matrix is positive semidefinite if and only if all its eigenvalue are non-negative, which means that the $\mathcal{S}_+^d$ can be written as the intersection of finite many polynomial inequalities. Thus, the  $\mathcal{S}_+^d$ is a semialgebraic set because the semialgebraic property is stable under the boolean operations. Moreover, From the Tarski-Seidenberg theorem \cite[Theorem 8.6.6]{Ref_1993Algorithmic}, we know that the semidefinite programming representable sets are all semialgebraic\cite{Ref_ben2001lectures,Ref_ioffe2009invitation}.\par
Now, we are ready to present our global convergence of $\left\{\mathbf{U}^k\right\}$ generated by s-iPDCA. Similar to  the method proposed by Liu et al.\cite{Ref_liu2019refined}, we make use of the following auxiliary function:
\begin{equation} \label{eq_54}
E\left(\mathbf{U},\mathbf{W},\mathbf{V}, \mathbf{T}\right) = f_1\left(\mathbf{U}\right)-\langle \mathbf{U},\mathbf{W}\rangle + f^*_2\left(\mathbf{W}\right) +\frac{\alpha}{2}\|\mathbf{U}-\mathbf{V}\|^2_F-\langle \mathbf{T}, \mathbf{U}-\mathbf{V}\rangle.
\end{equation}
In here, $f^*_2$ is convex conjugate of $f_2$, given as
\begin{equation} \label{eq_55}
f^*_2\left(\mathbf{W}\right) = \sup_{\mathbf{U}\in\mathcal{S}_+^d} \left\{\langle \mathbf{W},\mathbf{U}\rangle-f_2\left(\mathbf{U}\right)\right\}.
\end{equation}
Then $f_1\left(\mathbf{U}\right)-f_2\left(\mathbf{U}\right)\leq f_1\left(\mathbf{U}\right)  -\langle \mathbf{U},\mathbf{W}\rangle + f^*_2\left(\mathbf{W}\right)$ holds true. 
Thanks to the fact that $f_2\left(\mathbf{U}\right) = c\|\mathbf{U}\|_{(r)}$ is a proper closed convex function, we have that $f^*_2\left(\mathbf{W}\right)$ is also a proper closed convex function and the Young’s inequality holds
\begin{equation} \label{eq_56}
f^*_2\left(\mathbf{W}\right) + f_2\left(\mathbf{U}\right)\geq \langle \mathbf{U},\mathbf{W}\rangle.
\end{equation}
and the equality holds if and only if $\mathbf{W}\in \partial f_2\left(\mathbf{U}\right)$. Moreover, for any $\mathbf{U}$ and $\mathbf{W}$,  $\mathbf{W}\in \partial f_2\left(\mathbf{U}\right)$  if and only if $\mathbf{U}\in \partial f^*_2\left(\mathbf{W}\right)$. Obviously, $E$ is semialgebraic function, so it is also KL function.\par
Based on the assumption that infinite serious steps are performed in s-iPDCA for solving \eqref{eq_11}, we know that  only finite stability centers generated in null step. The sequence $\left\{\mathbf{U}^k\right\}$ is  shown as 
 \begin{equation}\nonumber
\left\{\cdots,\underbrace{\mathbf{U}^{k-M}}_{\mathbf{U}^{k_{l}}},\underbrace{\mathbf{U}^{k-M+1},\cdots,\mathbf{U}^{k}}_{\text{M null steps}},\underbrace{\mathbf{U}^{k+1}}_{\mathbf{U}^{k_{l+1}}},\underbrace{\mathbf{U}^{k+2},\cdots,\mathbf{U}^{k+N+1}}_{{\text{N null steps}}},\underbrace{\mathbf{U}^{k+N+2}}_{\mathbf{U}^{k_{l+2}}},\cdots\right\},
\end{equation}
 in which, $\mathbf{U}^{k_l}$ denotes the stablity center generated in the $l^{th}$ serious step.  Since the subsequence \[\left\{\mathbf{U}^{k-M+1},\cdots, \mathbf{U}^{k},\mathbf{U}^{k+2},\cdots,\mathbf{U}^{k+N+1}\right\}\]  is the collection of  the  stablity centers in null steps between $l^{th}$ serious step and $(l+2)^{th}$ serious step, then $\mathbf{U}^{k_{l}}=\mathbf{U}^{k-M} = \mathbf{U}^{k-M+1}=\cdots=\mathbf{U}^{k}$ and $\mathbf{U}^{k_{l+1}} = \mathbf{U}^{k+1} = \mathbf{U}^{k+2}=\cdots=\mathbf{U}^{k+N+1}$ hold, which shows that the stablity centers in null steps are the finite repetition of that in serious steps. By removing the $\mathbf{U}^k$ generated in null steps from $\left\{\mathbf{U}^k\right\}$, we obtain a subsequence $\left\{\mathbf{U}^{k_l}\right\}$. In addition,  subsequence $\left\{\mathbf{W}^{k_{l}}\right\}$ denotes the set of   chosen subgradient of $f_2$ at $\mathbf{U}^{k_{l}}$. The subsequence $\left\{\mathbf{\Delta}^{k_{l}}\right\}$ is the set of inexact term related to $\mathbf{U}^{k_{l}}$, then $\|\mathbf{\Delta}^{k_{l}}\|_F\leq \frac{(1-\kappa)\alpha}{2}\|\mathbf{U}^{k_{l}}-\mathbf{U}^{k_{l-1}}\|_F$ holds.
\begin{proposition}\label{prop_6}
 Let E be defined in \eqref{eq_54}. Suppose that  infinite serious steps are performed in s-iPDCA for solving \eqref{eq_11}. Let $\left\{\mathbf{U}^{k_l}\right\}$, $\left\{\mathbf{\Delta}^{k_{l}}\right\}$and $\left\{\mathbf{W}^{k_l}\right\}$ be the subsequences generated in serious steps of s-iPDCA for solving \eqref{eq_11}.  Then the
following statements hold:\\
$(1)$ For any $l$,
 \begin{equation} \label{eq_57}
J_c\left(\mathbf{U}^{k_{l+1}}\right)
\leq E\left(\mathbf{U}^{k_{l+1}},\mathbf{W}^{k_l},\mathbf{U}^{k_l},\mathbf{\Delta}^{k_{l+1}}\right).
\end{equation}
$(2)$ For any $l$,
 \begin{equation} \label{eq_58}
E\left(\mathbf{U}^{k_{l+1}},\mathbf{W}^{k_l},\mathbf{U}^{k_l},\mathbf{\Delta}^{k_{l+1}}\right)-E\left(\mathbf{U}^{k_l},\mathbf{W}^{k_{l-1}},\mathbf{U}^{k_{l-1}},\mathbf{\Delta}^{k_{l}}\right)\leq \frac{\kappa\alpha}{2}\|\mathbf{U}^{k_l} -\mathbf{U}^{k_{l-1}}\|^2_F.
\end{equation}
$(3)$ The set of accumulation points of the sequence $\left\{\left(\mathbf{U}^{k_{l+1}}, \mathbf{W}^{k_l}, \mathbf{U}^{k_l},\mathbf{\Delta}^{k_{l+1}}\right)\right\}$, denoted by $\mathbf{\Gamma}$, is a nonempty compact set.
\\
$(4)$ The limit $\Upsilon = \lim_{l\rightarrow\infty}E\left(\mathbf{U}^{k_{l+1}},\mathbf{W}^{k_l},\mathbf{U}^{k_l},\mathbf{\Delta}^{k_{l+1}}\right)$ exists and $E \equiv \Upsilon$ on $\mathbf{\Gamma}$.
\\
$(5)$ There exists $\rho > 0$ such that for any $l \geq 1$, we have
 \begin{equation} \label{eq_59}
\operatorname{dist}\left(0,\partial E\left(\mathbf{U}^{k_{l+1}},\mathbf{W}^{k_l},\mathbf{U}^{k_l},\mathbf{\Delta}^{k_{l+1}}\right)\right)\leq \rho\|\mathbf{U}^{k_{l+1}}-\mathbf{U}^{k_l}\|_F.
\end{equation}
\end{proposition}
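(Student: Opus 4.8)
The plan is to treat $E$ as a Lyapunov (merit) function along the serious-step subsequence $\{\mathbf{U}^{k_l}\}$ and verify the items in the order (1),(2),(3)/(4),(5). For (1), I would apply the Fenchel--Young inequality \eqref{eq_56} to obtain $-f_2(\mathbf{U}^{k_{l+1}})\le f_2^*(\mathbf{W}^{k_l})-\langle\mathbf{U}^{k_{l+1}},\mathbf{W}^{k_l}\rangle$, so that $J_c(\mathbf{U}^{k_{l+1}})\le f_1(\mathbf{U}^{k_{l+1}})-\langle\mathbf{U}^{k_{l+1}},\mathbf{W}^{k_l}\rangle+f_2^*(\mathbf{W}^{k_l})$; since $\mathbf{U}^{k_{l+1}}$ is produced at a serious step, estimate \eqref{eq_39} makes the remaining quantity $\frac{\alpha}{2}\|\mathbf{U}^{k_{l+1}}-\mathbf{U}^{k_l}\|_F^2-\langle\mathbf{\Delta}^{k_{l+1}},\mathbf{U}^{k_{l+1}}-\mathbf{U}^{k_l}\rangle$ nonnegative, and appending it reconstructs $E$, giving \eqref{eq_57}. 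For (2) I would sandwich $E$ between two copies of $J_c(\mathbf{U}^{k_l})$: the optimality inequality \eqref{eq_36} combined with the Young \emph{equality} $f_2^*(\mathbf{W}^{k_l})=\langle\mathbf{U}^{k_l},\mathbf{W}^{k_l}\rangle-f_2(\mathbf{U}^{k_l})$ (valid because $\mathbf{W}^{k_l}\in\partial f_2(\mathbf{U}^{k_l})$) yields $E(\mathbf{U}^{k_{l+1}},\mathbf{W}^{k_l},\mathbf{U}^{k_l},\mathbf{\Delta}^{k_{l+1}})\le J_c(\mathbf{U}^{k_l})$, while the subgradient inequality for $f_2$ plus \eqref{eq_39} at index $l$ gives $E(\mathbf{U}^{k_l},\mathbf{W}^{k_{l-1}},\mathbf{U}^{k_{l-1}},\mathbf{\Delta}^{k_l})\ge J_c(\mathbf{U}^{k_l})+\frac{\kappa\alpha}{2}\|\mathbf{U}^{k_l}-\mathbf{U}^{k_{l-1}}\|_F^2$. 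Subtracting shows the difference is in fact $\le-\frac{\kappa\alpha}{2}\|\mathbf{U}^{k_l}-\mathbf{U}^{k_{l-1}}\|_F^2$, which implies the displayed bound \eqref{eq_58} and, more usefully, that $\{E(\mathbf{U}^{k_{l+1}},\mathbf{W}^{k_l},\mathbf{U}^{k_l},\mathbf{\Delta}^{k_{l+1}})\}$ is nonincreasing.

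Items (3) and (4) are then mostly bookkeeping. The quadruple sequence is bounded: $\{\mathbf{U}^{k_l}\}$ by Proposition \ref{prop_4}(3), $\{\mathbf{W}^{k_l}\}$ because $\mathbf{W}^{k_l}=c\sum_{i=1}^r\boldsymbol{q}_i\boldsymbol{q}_i^\top$ has $\|\mathbf{W}^{k_l}\|_F=c\sqrt r$, and $\{\mathbf{\Delta}^{k_{l+1}}\}$ because $\|\mathbf{\Delta}^{k_{l+1}}\|_F\le\epsilon_{k+1}\to0$; Bolzano--Weierstrass then gives that $\mathbf{\Gamma}$ is nonempty and compact, and every accumulation point has the form $(\bar{\mathbf{U}},\bar{\mathbf{W}},\bar{\mathbf{U}},\mathbf{0})$ since $\mathbf{\Delta}^{k_{l+1}}\to\mathbf{0}$ and $\|\mathbf{U}^{k_{l+1}}-\mathbf{U}^{k_l}\|_F\to0$ by \eqref{eq_48}. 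For (4), lower boundedness of $E$ (via $E\ge J_c\ge\inf J_c>-\infty$ from (1) and Proposition \ref{prop_4}(1)) together with the monotonicity from (2) shows that $\Upsilon=\lim_l E$ exists. To get $E\equiv\Upsilon$ on $\mathbf{\Gamma}$, I would eliminate the lower semicontinuous term $f_2^*$ via the Young equality and rewrite $E$ along the subsequence as the \emph{jointly continuous} expression $f_1(\mathbf{U}^{k_{l+1}})-\langle\mathbf{U}^{k_{l+1}}-\mathbf{U}^{k_l},\mathbf{W}^{k_l}\rangle-f_2(\mathbf{U}^{k_l})+\frac{\alpha}{2}\|\mathbf{U}^{k_{l+1}}-\mathbf{U}^{k_l}\|_F^2-\langle\mathbf{\Delta}^{k_{l+1}},\mathbf{U}^{k_{l+1}}-\mathbf{U}^{k_l}\rangle$; passing to a convergent subsequence and using $\|\mathbf{U}^{k_{l+1}}-\mathbf{U}^{k_l}\|_F\to0$, $\mathbf{\Delta}^{k_{l+1}}\to\mathbf0$, and $\mathbf{W}^{k_l}\to\bar{\mathbf{W}}\in\partial f_2(\bar{\mathbf{U}})$ (closedness of the subdifferential graph) identifies the common limit as $J_c(\bar{\mathbf{U}})=\Upsilon$, which by the Young equality equals $E$ evaluated directly at $(\bar{\mathbf{U}},\bar{\mathbf{W}},\bar{\mathbf{U}},\mathbf{0})$.

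Finally, (5) is the crux and the step I expect to require the most care. I would first record the block subdifferential of $E$, namely $\partial_{\mathbf{U}}E\ni\nabla f_1(\mathbf{U})+\partial\delta_{\mathcal{S}_+^d}(\mathbf{U})-\mathbf{W}+\alpha(\mathbf{U}-\mathbf{V})-\mathbf{T}$, $\partial_{\mathbf{W}}E\ni-\mathbf{U}+\partial f_2^*(\mathbf{W})$, $\partial_{\mathbf{V}}E=\mathbf{T}-\alpha(\mathbf{U}-\mathbf{V})$, and $\partial_{\mathbf{T}}E=-(\mathbf{U}-\mathbf{V})$. Evaluated at $(\mathbf{U}^{k_{l+1}},\mathbf{W}^{k_l},\mathbf{U}^{k_l},\mathbf{\Delta}^{k_{l+1}})$: the $\mathbf{U}$-block can be made to vanish by taking the normal-cone element supplied by the serious-step optimality condition \eqref{eq_49}; the $\mathbf{W}$-block contains $\mathbf{U}^{k_l}-\mathbf{U}^{k_{l+1}}$, since $\mathbf{W}^{k_l}\in\partial f_2(\mathbf{U}^{k_l})$ is equivalent to $\mathbf{U}^{k_l}\in\partial f_2^*(\mathbf{W}^{k_l})$; the $\mathbf{V}$-block equals $\mathbf{\Delta}^{k_{l+1}}-\alpha(\mathbf{U}^{k_{l+1}}-\mathbf{U}^{k_l})$; and the $\mathbf{T}$-block equals $\mathbf{U}^{k_l}-\mathbf{U}^{k_{l+1}}$. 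Bounding the $\mathbf{V}$-block with $\|\mathbf{\Delta}^{k_{l+1}}\|_F\le\frac{(1-\kappa)\alpha}{2}\|\mathbf{U}^{k_{l+1}}-\mathbf{U}^{k_l}\|_F$ and collecting the blocks in the product norm yields \eqref{eq_59} with, e.g., $\rho=\sqrt{2+\alpha^2\big(\tfrac{3-\kappa}{2}\big)^2}$. The two points deserving the most attention are the justification of the block-separable form of $\partial E$ — in particular converting the $f_2^*$-term into a concrete element through the conjugacy relation $\mathbf{U}\in\partial f_2^*(\mathbf{W})\Leftrightarrow\mathbf{W}\in\partial f_2(\mathbf{U})$ — and being explicit that the first argument of $E$ should carry the indicator $\delta_{\mathcal{S}_+^d}$, so that \eqref{eq_49} legitimately zeroes the $\mathbf{U}$-block.
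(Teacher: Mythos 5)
Your proposal is correct, and for items (1)--(2) it is essentially the paper's own argument, lightly reorganized: the paper likewise combines the serious-step test \eqref{eq_15} (through the estimates \eqref{eq_61} and \eqref{eq_63}), the Fenchel--Young equality $f_2^*(\mathbf{W}^{k_l})=\langle\mathbf{U}^{k_l},\mathbf{W}^{k_l}\rangle-f_2(\mathbf{U}^{k_l})$, the subgradient inequality for $f_2$, and the optimality inequality \eqref{eq_62}; your sandwich $E(\mathbf{U}^{k_{l+1}},\mathbf{W}^{k_l},\mathbf{U}^{k_l},\mathbf{\Delta}^{k_{l+1}})\le J_c(\mathbf{U}^{k_l})\le E(\mathbf{U}^{k_l},\mathbf{W}^{k_{l-1}},\mathbf{U}^{k_{l-1}},\mathbf{\Delta}^{k_l})-\frac{\kappa\alpha}{2}\|\mathbf{U}^{k_l}-\mathbf{U}^{k_{l-1}}\|_F^2$ is exactly the chain the paper writes out, and like the paper you in fact obtain the stronger descent inequality with $-\frac{\kappa\alpha}{2}$ on the right-hand side of \eqref{eq_58} (the $+$ sign in the printed statement is evidently a typo; the stronger form is what Theorem \ref{thm:2} actually uses in \eqref{eq_66}). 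The genuine difference is in (3)--(5), whose proofs the paper omits with a pointer to the literature on PDCAe-type auxiliary functions, while you supply them in full; your arguments are the standard ones and are sound: boundedness of all four blocks (Proposition \ref{prop_4}(3), $\|\mathbf{W}^{k_l}\|_F=c\sqrt{r}$ for the paper's explicit choice of subgradient, and $\|\mathbf{\Delta}^{k_{l+1}}\|_F\to 0$), vanishing successive differences via \eqref{eq_48}, elimination of the lower semicontinuous term $f_2^*$ through the Young equality so that $E$ can be passed to the limit, graph-closedness of $\partial f_2$, and a blockwise subdifferential estimate yielding an explicit $\rho$. What your route buys is precisely the part the paper outsources, including a point that is a real (fixable) defect in the paper as written: with $E$ literally as in \eqref{eq_54}, i.e.\ without $\delta_{\mathcal{S}_+^d}$ attached to the first argument, the $\mathbf{U}$-component of $\partial E$ at the iterate equals $-\zeta^{k+1}$, the normal-cone element supplied by \eqref{eq_49}, which is not controlled by $\|\mathbf{U}^{k_{l+1}}-\mathbf{U}^{k_l}\|_F$, so statement (5) would fail; your amendment of carrying the indicator in $E$ (as the cited works do with their nonsmooth term) is exactly the needed repair, and it is harmless elsewhere since $\delta_{\mathcal{S}_+^d}$ is semialgebraic, so the KL machinery invoked in Theorem \ref{thm:2} still applies.
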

\begin{proof}
We first prove (1). 
Since $\mathbf{U}^{k_{l+1}} = \mathbf{V}^{k+1}$ is the  stability center generated in serious step,  then the test \eqref{eq_15} holds true, shown as
\begin{equation}\label{60}
\|\mathbf{\Delta}^{k_{l+1}}\|_F\leq\frac{\left(1-\kappa\right)\alpha}{2}\|\mathbf{U}^{k_{l+1}}-\mathbf{U}^{k_l}\|_F,
\end{equation}
then we get
\begin{equation}\label{eq_61}
\frac{\kappa\alpha}{2}\|\mathbf{U}^{k_{l+1}}-\mathbf{U}^{k_l}\|_F^2\leq \frac{\alpha}{2}\|\mathbf{U}^{k_{l+1}}-\mathbf{U}^{k_l}\|_F^2-\langle\mathbf{\Delta}^{k_{l+1}},\mathbf{U}^{k_{l+1}}-\mathbf{U}^{k_l}\rangle.
\end{equation}
Thus, we have
\begin{equation}\nonumber 
\begin{array}{lll}
&&E\left(\mathbf{U}^{k_{l+1}},\mathbf{W}^{k_l},\mathbf{U}^{k_l},\mathbf{\Delta}^{k_{l+1}}\right)\\
&=&  f_1\left(\mathbf{U}^{k_{l+1}}\right)-\langle \mathbf{U}^{k_{l+1}},\mathbf{W}^{k_l}\rangle + f^*_2\left(\mathbf{W}^{k_l}\right)+\frac{\alpha}{2}\|\mathbf{U}^{k_{l+1}} -\mathbf{U}^{k_l}\|^2_F-\langle\mathbf{\Delta}^{k_{l+1}},\mathbf{U}^{k_{l+1}}-\mathbf{U}^{k_l}\rangle\\
&=&  f_1\left(\mathbf{U}^{k_{l+1}}\right)-\langle \mathbf{U}^{k_{l+1}}-\mathbf{U}^{k_l},\mathbf{W}^{k_l}\rangle - f_2\left(\mathbf{U}^{k_l}\right)+\frac{\alpha}{2}\|\mathbf{U}^{k_{l+1}} -\mathbf{U}^{k_l}\|^2_F-\langle\mathbf{\Delta}^{k_{l+1}},\mathbf{U}^{k_{l+1}}-\mathbf{U}^{k_l}\rangle\\
&\geq&  f_1\left(\mathbf{U}^{k_{l+1}}\right)-  f_2\left(\mathbf{U}^{k_{l+1}}\right)+\frac{\alpha}{2}\|\mathbf{U}^{k_{l+1}} -\mathbf{U}^{k_l}\|^2_F-\langle\mathbf{\Delta}^{k_{l+1}},\mathbf{U}^{k_{l+1}}-\mathbf{U}^{k_l}\rangle\\
&\geq&  f_1\left(\mathbf{U}^{k_{l+1}}\right)-  f_2\left(\mathbf{U}^{k_{l+1}}\right)+\frac{\kappa\alpha}{2}\|\mathbf{U}^{k_{l+1}} -\mathbf{U}^{k_l}\|^2_F\\
&\geq&  f_1\left(\mathbf{U}^{k_{l+1}}\right)-  f_2\left(\mathbf{U}^{k_{l+1}}\right) = J_c\left(\mathbf{U}^{k_{l+1}}\right).
 \end{array}
\end{equation}
In here, the second euqality follows from  the convexity of $f_2$ and the fact that $\mathbf{W}^{k_l}\in\partial f_2\left(\mathbf{U}^{k_l}\right)$, the first ineuqality follows from  the convexity of $f_2$.\par
For state (2). Since $\mathbf{U}^{k_{l+1}} =\mathbf{V}^{k+1}\in\mathcal{S}_+^d$ is the optimal solution of strongly convex problem \eqref{eq_16}, then the following inequality follow from the feasibility of $\mathbf{U}^{k_l}$:
\begin{equation}\label{eq_62}
\begin{array}{lll}
& f_1(\mathbf{U}^{k_{l+1}})-\langle \mathbf{U}^{k_{l+1}},\mathbf{W}^{k_l}\rangle+\frac{\alpha}{2}\|\mathbf{U}^{k_{l+1}}-\mathbf{U}^{k_l}\|_F^2-\langle\mathbf{\Delta}^{k_{l+1}},\mathbf{U}^{k_{l+1}}\rangle\\
&\leq f_1(\mathbf{U}^{k_l})-\langle \mathbf{U}^{k_l},\mathbf{W}^{k_l}\rangle -\langle\mathbf{\Delta}^{k_{l+1}},\mathbf{U}^{k_l}\rangle.
\end{array}
\end{equation}
then we know that
\begin{equation} \nonumber
\begin{array}{lll}
& &E\left(\mathbf{U}^{k_{l+1}},\mathbf{W}^{k_l},\mathbf{U}^{k_l},\mathbf{\Delta}^{k_{l+1}}\right)\\
&=&  f_1\left(\mathbf{U}^{k_{l+1}}\right)-\langle \mathbf{U}^{k_{l+1}},\mathbf{W}^{k_l}\rangle + f^*_2\left(\mathbf{W}^{k_l}\right)+\frac{\alpha}{2}\|\mathbf{U}^{k_{l+1}} -\mathbf{U}^{k_l}\|^2_F
-\langle\mathbf{\Delta}^{k_{l+1}},\mathbf{U}^{k_{l+1}}-\mathbf{U}^{k_l}\rangle\\
&\leq& f_1(\mathbf{U}^{k_l})+\langle \mathbf{U}^{k_{l+1}}-\mathbf{U}^{k_l},\mathbf{W}^{k_l}\rangle-\frac{\alpha}{2}\|\mathbf{U}^{k_{l+1}}-\mathbf{U}^{k_l}\|_F^2 +\langle\mathbf{\Delta}^{k_{l+1}},\mathbf{U}^{k_{l+1}}-\mathbf{U}^{k_l}\rangle\\
&&-\langle \mathbf{U}^{k_{l+1}},\mathbf{W}^{k_l}\rangle 
+ f^*_2\left(\mathbf{W}^{k_l}\right)+\frac{\alpha}{2}\|\mathbf{U}^{k_{l+1}} -\mathbf{U}^{k_l}\|^2_F-\langle\mathbf{\Delta}^{k_{l+1}},\mathbf{U}^{k_{l+1}}-\mathbf{U}^{k_l}\rangle\\
& =&  f_1(\mathbf{U}^{k_l})-\langle \mathbf{U}^{k_l},\mathbf{W}^{k_l}\rangle + f^*_2\left(\mathbf{W}^{k_l}\right).
 \end{array}
\end{equation}
Simlar to  inequality \eqref{eq_61}, the following inequality:
\begin{equation}\label{eq_63}
\frac{\kappa\alpha}{2}\|\mathbf{U}^{k_l}-\mathbf{U}^{k_{l-1}}\|_F^2\leq \frac{\alpha}{2}\|\mathbf{U}^{k_l}-\mathbf{U}^{k_{l-1}}\|_F^2-\langle\mathbf{\Delta}^{k_{l}},\mathbf{U}^{k_l}-\mathbf{U}^{k_{l-1}}\rangle.
\end{equation}
holds true.
Consequently, we have
\begin{equation}\nonumber 
\begin{array}{lll}
&&E\left(\mathbf{U}^{k_{l+1}},\mathbf{W}^{k_l},\mathbf{U}^{k_l},\mathbf{\Delta}^{k_{l+1}}\right)\\
& \leq & f_1(\mathbf{U}^{k_l})-\langle \mathbf{U}^{k_l},\mathbf{W}^{k_l}\rangle + f^*_2\left(\mathbf{W}^{k_l}\right)=  f_1(\mathbf{U}^{k_l}) - f_2\left(\mathbf{U}^{k_l}\right)\\
& \leq&  f_1(\mathbf{U}^{k_l})-\langle \mathbf{U}^{k_l},\mathbf{W}^{k_{l-1}}\rangle + f^*_2\left(\mathbf{W}^{k_{l-1}}\right)\\
& =& E\left(\mathbf{U}^{k_l},\mathbf{W}^{k_{l-1}},\mathbf{U}^{k_{l-1}},\mathbf{\Delta}^{k_{l}}\right)- \frac{\alpha}{2}\left\|\mathbf{U}^{k_l} -\mathbf{U}^{k_{l-1}}\right\|^2_F+ \langle\mathbf{\Delta}^{k_{l}},\mathbf{U}^{k_l}-\mathbf{U}^{k_{l-1}}\rangle\\
&\leq& E\left(\mathbf{U}^{k_l},\mathbf{W}^{k_{l-1}},\mathbf{U}^{k_{l-1}},\mathbf{\Delta}^{k_{l-1}}\right)- \frac{\kappa\alpha}{2}\|\left(\mathbf{U}^{k_l} -\mathbf{U}^{k_{l-1}}\right)\|^2_F.
 \end{array}
\end{equation}
In here, the first equality follows from the convexity of $f_2$ and the fact that $\mathbf{W}^{k_l}\in\partial f_2\left(\mathbf{U}^{k_l}\right)$. The second inequality follows from the convexity of $f_2$ and the Young’s inequality applied to $f_2$. The last inequality come from \eqref{eq_63}.\par
For space limitations, we omitted the proof of statements (3)-(5),  and we refer the intrested readers to \cite{Ref_wen2018proximal,Ref_liu2019refined}. This completes the proof.
\end{proof}
\begin{theorem}\label{thm:2}
Set the tolerance error $\varepsilon = 0$.  Let $\{\mathbf{U}^{k_l}\}$ be the  stablity center sequence generated in serious steps of s-iPDCA for solving \eqref{eq_11}. Then  $\{\mathbf{U}^{k_l}\}$ is convergent to a stationary point of \eqref{eq_11}. Moreover, $\mathbf{\Sigma}_{l=0}^{\infty}\|\mathbf{U}^{k_{l+1}}-\mathbf{U}^{k_{l}}\|_F<\infty$.
\end{theorem}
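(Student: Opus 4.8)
The plan is to run the standard Kurdyka--Łojasiewicz (KL) descent argument on the auxiliary sequence $a_l := \left(\mathbf{U}^{k_{l+1}},\mathbf{W}^{k_l},\mathbf{U}^{k_l},\mathbf{\Delta}^{k_{l+1}}\right)$, exploiting the four-in-one package assembled in Proposition \ref{prop_6}: the majorization $J_c(\mathbf{U}^{k_{l+1}})\le E(a_l)$ from (1), the sufficient-decrease estimate from (2), the compactness and constancy of the limit set $\mathbf{\Gamma}$ from (3)--(4), and the relative-error bound from (5). Writing $s_l := \|\mathbf{U}^{k_{l+1}}-\mathbf{U}^{k_l}\|_F$ and $\Upsilon := \lim_l E(a_l)$, the two estimates I would use repeatedly are
\begin{equation}\nonumber
E(a_l)-E(a_{l+1})\ge \tfrac{\kappa\alpha}{2}\,s_l^2
\qquad\text{and}\qquad
\operatorname{dist}\!\left(0,\partial E(a_l)\right)\le \rho\,s_l ,
\end{equation}
both referring to the \emph{same} increment $s_l$, so that no arithmetic--geometric splitting of the decrement is required.

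First I would dispose of the trivial case: if $E(a_{l_0})=\Upsilon$ for some finite $l_0$, then monotonicity of $\{E(a_l)\}$ together with the sufficient-decrease estimate forces $s_l=0$ for all $l\ge l_0$, so $\{\mathbf{U}^{k_l}\}$ is eventually constant and both conclusions hold at once. Otherwise $E(a_l)>\Upsilon$ for every $l$, and I would invoke the uniformized KL property (Lemma \ref{lemma_1}) on the compact set $\mathbf{\Gamma}$, on which $E\equiv\Upsilon$. Since $\{\mathbf{U}^{k_l}\}$ is bounded (Proposition \ref{prop_4}) with accumulation set $\mathbf{\Gamma}$, one has $\operatorname{dist}(a_l,\mathbf{\Gamma})\to0$; combining $s_l\to0$ with the relative-error bound and $E(a_l)\downarrow\Upsilon$ then places $a_l$ inside the neighborhood $\mathcal{U}$ of Lemma \ref{lemma_1} for all large $l$, so the KL inequality yields $\varphi'(E(a_l)-\Upsilon)\ge 1/\operatorname{dist}(0,\partial E(a_l))\ge 1/(\rho s_l)$.

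The key step is to turn this into a summable bound on $s_l$. Using concavity of $\varphi$,
\begin{equation}\nonumber
\varphi\!\left(E(a_l)-\Upsilon\right)-\varphi\!\left(E(a_{l+1})-\Upsilon\right)
\ge \varphi'\!\left(E(a_l)-\Upsilon\right)\bigl(E(a_l)-E(a_{l+1})\bigr)
\ge \frac{1}{\rho s_l}\cdot\frac{\kappa\alpha}{2}s_l^2
=\frac{\kappa\alpha}{2\rho}\,s_l ,
\end{equation}
whence $s_l\le \tfrac{2\rho}{\kappa\alpha}\bigl[\varphi(E(a_l)-\Upsilon)-\varphi(E(a_{l+1})-\Upsilon)\bigr]$. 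Summing over $l$ and using $\varphi\ge0$ telescopes the right-hand side to a finite bound, giving $\sum_{l=0}^{\infty}\|\mathbf{U}^{k_{l+1}}-\mathbf{U}^{k_l}\|_F<\infty$, the finite-length claim. Finite length makes $\{\mathbf{U}^{k_l}\}$ a Cauchy sequence, hence convergent to some $\mathbf{U}^\ast$; as $\mathbf{U}^\ast$ is then an accumulation point of the stability-center sequence, Theorem \ref{thm:1}(2) identifies it as a stationary point of \eqref{eq_11}.

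I expect the main obstacle to be the bookkeeping needed to legitimately apply the uniformized KL inequality: verifying that the \emph{full} quadruple $a_l$ (not merely its $\mathbf{U}$-component) eventually lies in $\mathcal{U}$, which requires $\operatorname{dist}(a_l,\mathbf{\Gamma})\to0$ and that the subdifferential distance stays below the threshold $\epsilon$, and carefully aligning the indices of the serious-step subsequence so that the sufficient-decrease and relative-error bounds are read off on the same increment $s_l$. The remaining work — the concavity chain and the telescoping sum — is routine.
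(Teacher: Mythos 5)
Your proof is correct, and it reaches the conclusion by a genuinely leaner route than the paper at the one step where the two arguments differ. Both proofs run the same uniformized-KL machinery (Lemma \ref{lemma_1}) on the same auxiliary function $E$ of \eqref{eq_54}, both dispose of the degenerate case $E(a_{l_0})=\Upsilon$ first (the paper turns it into a contradiction with the standing assumption of infinitely many serious steps --- note that the strict inequality in the test \eqref{eq_15} forces $s_l>0$ at every serious step, so your ``eventually constant'' branch is in fact vacuous under that assumption, which is harmless since your conclusions hold either way), and both finish via Theorem \ref{thm:1}(2). The difference is at the summability step. The paper, following the PDCAe template of Wen et al., pairs the subgradient bound at index $l-1$ with the \emph{two-step} decrement $E^{k_{l-1}}-E^{k_{l+1}}$, producing the mixed inequality \eqref{eq_66} whose left side is quadratic in two different increments; extracting $\|\mathbf{U}^{k_l}-\mathbf{U}^{k_{l-1}}\|_F$ then requires the arithmetic--geometric mean splitting and the weighted telescoping of \eqref{eq_67}--\eqref{eq_69}. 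You instead observe that with the quadruple indexing $a_l=(\mathbf{U}^{k_{l+1}},\mathbf{W}^{k_l},\mathbf{U}^{k_l},\mathbf{\Delta}^{k_{l+1}})$, the sufficient decrease in Proposition \ref{prop_6}(2) (whose proof establishes $E(a_{l+1})\le E(a_l)-\tfrac{\kappa\alpha}{2}s_l^2$; the sign in the display \eqref{eq_58} is a typo) and the relative-error bound \eqref{eq_59} are read off on the \emph{same} increment $s_l=\|\mathbf{U}^{k_{l+1}}-\mathbf{U}^{k_l}\|_F$, so the classical one-step Attouch--Bolte chain
\begin{equation}\nonumber
\varphi\bigl(E(a_l)-\Upsilon\bigr)-\varphi\bigl(E(a_{l+1})-\Upsilon\bigr)\ \ge\ \frac{\kappa\alpha}{2\rho}\,s_l
\end{equation}
applies directly and telescopes cleanly. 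This is a real simplification, and it is legitimate precisely because s-iPDCA has no extrapolation-induced index mismatch here --- the mismatch is what makes the AM--GM detour unavoidable in the PDCAe analysis the authors imported. What the paper's heavier route buys is robustness of the template (it would survive a one-index offset between the decrease and error bounds); what yours buys is a shorter proof and a cleaner constant in the finite-length bound. Your two flagged bookkeeping points (that the full quadruple $a_l$, not just its $\mathbf{U}$-component, must enter the KL neighborhood, and the $s_l>0$ caveat before dividing by $s_l$) are exactly the right ones and are handled the same way the paper handles its set $\mathbf{\Theta}$.
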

\begin{proof}
From Proposition \ref{prop_6}, we know that  $\left\{E\left(\mathbf{U}^{k_{l+1}},\mathbf{W}^{k_l},\mathbf{U}^{k_l},\mathbf{\Delta}^{k_{l+1}}\right) \right\}$ is nonincreasing and its limitation $\Upsilon$ exists.  We first show that $E\left(\mathbf{U}^{k_{l+1}},\mathbf{W}^{k_l},\mathbf{U}^{k_l},\mathbf{\Delta}^{k_{l+1}}\right) > \Upsilon,  \forall l>0$. To this end, we suppose that  $\exists L>0$ such that $E\left(\mathbf{U}^{k_{L+1}}, \mathbf{W}^{k_L}, \mathbf{U}^{k_L},\mathbf{\Delta}^{k_{L+1}}\right) = \Upsilon$, then $E\left(\mathbf{U}^{k_{l+1}},\mathbf{W}^{k_l},\mathbf{U}^{k_l},\mathbf{\Delta}^{k_{l+1}}\right) = \Upsilon$ holds ture for all $l>L$. From \eqref{eq_58}, we have $\mathbf{U}^{k_l} =\mathbf{U}^{k_L}$, $\forall l \geq L$. This shows that only finite serious steps are performed for s-iPDCA, which is contrary to the assumption of infinite serious steps.\par
 Next, from the Theorem \ref{thm:1}, we know that it is sufficient to show the convergence of $\{\mathbf{U}^{k_l}\}$  and $\mathbf{\Sigma}_{l=0}^{\infty}\|\mathbf{U}^{k_{l+1}}-\mathbf{U}^{k_l}\|_F$.  Since $E$ satisfies the KL property at each point in the compact set $\mathbf{\Gamma}\subset\operatorname{dom} \partial E$ and $E\equiv\Upsilon$ on $\mathbf{\Gamma}$, by Lemma \ref{lemma_1}, there exists $\epsilon>0$  and a continuous concave function $\varphi\in\Xi_a$ with $a > 0$ such that
\begin{equation}\nonumber
\varphi^{\prime}\left(E\left(\mathbf{U},\mathbf{W},\mathbf{V}, \mathbf{T}\right)-\Upsilon\right) \cdot \operatorname{dist}(\mathbf{0}, \partial E\left(\mathbf{U},\mathbf{W},\mathbf{V},\mathbf{T}\right) \geq 1.
\end{equation}
$\forall \left(\mathbf{U},\mathbf{W},\mathbf{V},\mathbf{T}\right)\in \mathbf{\Theta}$, with
\begin{equation}\nonumber
\begin{array}{ll}
\mathbf{\Theta}=&\left\{\left(\mathbf{U},\mathbf{W},\mathbf{V},\mathbf{T}\right): \operatorname{dist}(\left(\mathbf{U},\mathbf{W},\mathbf{V},\mathbf{T}\right), \mathbf{\Gamma})<\epsilon\right\} \\
&\cap\left\{\left(\mathbf{U},\mathbf{W},\mathbf{V},\mathbf{T}\right): \Upsilon<E\left(\mathbf{U},\mathbf{W},\mathbf{V},\mathbf{T}\right)<\Upsilon+a\right\}
\end{array}
\end{equation}
Since $\mathbf{\Gamma}$ is the set of accumulation points of the  $\left\{\left(\mathbf{U}^{k_{l+1}}, \mathbf{W}^{k_l}, \mathbf{U}^{k_l},\mathbf{\Delta}^{k_{l+1}}\right)\right\}$, we know that
\begin{equation}\nonumber
\lim_{l\rightarrow\infty}\operatorname{dist} \left(\left(\mathbf{U}^{k_{l+1}}, \mathbf{W}^{k_l}, \mathbf{U}^{k_l},\mathbf{\Delta}^{k_{l+1}}\right),\mathbf{\Gamma}\right) = 0.
\end{equation}
Thus, there exists  $\bar{L}>0$ such that 
\[\operatorname{dist} \left(\left(\mathbf{U}^{k_{l+1}}, \mathbf{W}^{k_l}, \mathbf{U}^{k_l},\mathbf{\Delta}^{k_{l+1}}\right),\mathbf{\Gamma}\right)<\epsilon, \forall l>\bar{L}-2.\]
 From Proposition \ref{prop_6}, we know that the sequence $\left\{E\left(\mathbf{U}^{k_{l+1}},\mathbf{W}^{k_l},\mathbf{U}^{k_l},\mathbf{\Delta}^{k_{l+1}}\right) \right\}$ converges to $\Upsilon$, then exists $\bar{\bar{L}}>0$ such that 
 \[\Upsilon<E\left(\mathbf{U}^{k_{l+1}},\mathbf{W}^{k_l},\mathbf{U}^{k_l},\mathbf{\Delta}^{k_{l+1}}\right)<\Upsilon+a, \forall l>\bar{\bar{L}}-2.\]
  Let $\tilde{L} = \max\left\{\bar{L},\bar{\bar{L}}\right\}$ and \[E^{k_{l-1}} = E\left(\mathbf{U}^{k_{l-1}},\mathbf{W}^{k_{l-2}},\mathbf{U}^{k_{l-2}},\mathbf{\Delta}^{k_{l-1}}\right).\] Therefore,  $\forall l>\tilde{L}$, we have that $\left(\mathbf{U}^{k_{l-1}}, \mathbf{W}^{k_{l-2}}, \mathbf{U}^{k_{l-2}},\mathbf{\Delta}^{k_{l-1}} \right)\in\mathbf{\Theta}$  and 
\begin{equation}\label{eq_64}
\varphi^{\prime}\left(E^{k_{l-1}}-\Upsilon\right) \cdot \operatorname{dist}\left(\mathbf{0}, \partial E^{k_{l-1}} \right)\geq 1
\end{equation}
hold true. Using the concavity of $\varphi$, we know that 
\begin{equation}\label{eq_65}
\begin{array}{ll}
&\left[\varphi\left(E^{k_{l-1}}-\Upsilon\right) -\varphi\left(E^{k_{l+1}}-\Upsilon\right)\right] \cdot \operatorname{dist}(\mathbf{0}, \partial E^{k_{l-1}}) \\
&\geq\varphi^{\prime}\left(E^{k_{l-1}}-\Upsilon\right)\cdot \operatorname{dist}\left(\mathbf{0}, \partial E^{k_{l-1}}\right)\cdot \left(E^{k_{l-1}}-E^{k_{l+1}}\right) \geq E^{k_{l-1}}-E^{k_{l+1}}
\end{array}
\end{equation}
holds true,  $\forall l>\tilde{L}$. In here, the last inequality follows from \eqref{eq_64}.
Let $\mathbf{\pi}^{k_{l-1}} =  \varphi\left(E^{k_{l-1}}-\Upsilon\right)$ and $\mathbf{\pi}^{k_{l+1}} =  \varphi\left(E^{k_{l+1}}-\Upsilon\right)$. Combining the results in \eqref{eq_58}, \eqref{eq_59} and \eqref{eq_65}, we have 
\begin{equation}\label{eq_66}
\begin{array}{ll}
&\left\|\mathbf{U}^{k_l} -\mathbf{U}^{k_{l-1}}\right\|^2_F+\left\|\mathbf{U}^{k_{l-1}} -\mathbf{U}^{k_{l-2}}\right\|^2_F\leq \frac{2\rho}{\kappa\alpha}\left(\mathbf{\pi}^{k_{l-1}} -\mathbf{\pi}^{k_{l+1}}\right)\left\|\mathbf{U}^{k_{l-1}} -\mathbf{U}^{k_{l-2}}\right\|_F
\end{array}
\end{equation}
Applying the arithmetic mean–geometric mean inequality, we obtain
\begin{equation}\nonumber
\begin{array}{ll}
\left\|\mathbf{U}^{k_l} -\mathbf{U}^{k_{l-1}}\right\|_F&\leq \sqrt{\frac{\rho}{\kappa\alpha}\left(\mathbf{\pi}^{k_{l-1}} -\mathbf{\pi}^{k_{l+1}}\right)-\frac{1}{2}\left\|\mathbf{U}^{k_{l-1}} -\mathbf{U}^{k_{l-2}}\right\|_F}\cdot\sqrt{2\left\|\mathbf{U}^{k_{l-1}} -\mathbf{U}^{k_{l-2}}\right\|_F}\\
&\leq \frac{\rho}{2\kappa\alpha}\left(\mathbf{\pi}^{k_{l-1}} -\mathbf{\pi}^{k_{l+1}}\right)-\frac{1}{4}\left\|\mathbf{U}^{k_{l-1}} -\mathbf{U}^{k_{l-2}}\right\|_F+ \left\|\mathbf{U}^{k_{l-1}} -\mathbf{U}^{k_{l-2}}\right\|_F.
\end{array}
\end{equation}
Then, we have
\begin{equation}\label{eq_67}
\frac{1}{4}\left\|\mathbf{U}^{k_l} -\mathbf{U}^{k_{l-1}}\right\|_F\leq \frac{\rho}{2\kappa\alpha}\left(\mathbf{\pi}^{k_{l-1}} -\mathbf{\pi}^{k_{l+1}}\right)+ \frac{3}{4}\left(\left\|\mathbf{U}^{k_{l-1}} -\mathbf{U}^{k_{l-2}}\right\|_F-\left\|\mathbf{U}^{k_l} -\mathbf{U}^{k_{l-1}}\right\|_F\right)
\end{equation}
Summing both sides of \eqref{eq_67} from $l=\tilde{L}$ to $\infty$, we have
\begin{equation}\label{eq_68}
\begin{array}{lll}
\frac{1}{4}\Sigma_{l=\tilde{L}}^{\infty}\left\|\mathbf{U}^{k_l} -\mathbf{U}^{k_{l-1}}\right\|_F&\leq& \frac{\rho}{2\kappa\alpha}\left(\mathbf{\pi}^{k_{\tilde{L}-1}} +\mathbf{\pi}^{k_{\tilde{L}}}\right)- \lim_{l\rightarrow\infty}\frac{\rho}{2\kappa\alpha}\left(\mathbf{\pi}^{k_{l}} +\mathbf{\pi}^{k_{l+1}}\right)\\
&&+ \frac{3}{4}\left(\left\|\mathbf{U}^{k_{\tilde{L}-1}} -\mathbf{U}^{k_{\tilde{L}-2}}\right\|_F-\lim_{l\rightarrow\infty}\left\|\mathbf{U}^{k_l} -\mathbf{U}^{k_{l-1}}\right\|_F\right)
\end{array}
\end{equation}
By applying the fact that $\lim_{l\rightarrow\infty}\frac{\rho}{2\kappa\alpha}\left(\mathbf{\pi}^{k_{l}} +\mathbf{\pi}^{k_{l+1}}\right)=0$ and $\lim_{l\rightarrow\infty}\left\|\mathbf{U}^{k_l} -\mathbf{U}^{k_{l-1}}\right\|_F=0$, we obtain
\begin{equation}\label{eq_69}
\begin{array}{lll}
\frac{1}{4}\Sigma_{l=\tilde{L}}^{\infty}\left\|\mathbf{U}^{k_l} -\mathbf{U}^{k_{l-1}}\right\|_F\leq \frac{\rho}{2\kappa\alpha}\left(\mathbf{\pi}^{k_{\tilde{L}-1}} +\mathbf{\pi}^{k_{\tilde{L}}}\right)+ \frac{3}{4}\left\|\mathbf{U}^{k_{\tilde{L}-1}} -\mathbf{U}^{k_{\tilde{L}-2}}\right\|_F<\infty
\end{array}
\end{equation}
Thus the subsequence $\left\{\mathbf{U}^{k_l}\right\}$ is convergent as well as $\mathbf{\Sigma}_{l=0}^{\infty}\|\mathbf{U}^{k_{l+1}}-\mathbf{U}^{k_l}\|_F<\infty$. Combining this with the results of Theorem \ref{thm:1}, we know that the sequence $\left\{\mathbf{U}^{k_l}\right\}$ generated by s-iPDCA converges to  the stationary points of \eqref{eq_11}. This completes the proof.
\end{proof}
\begin{theorem}\label{thm:3}
Set the tolerance error $\varepsilon = 0$.  Let $\{\mathbf{U}^{k}\}$ be the  stablity center sequence generated by s-iPDCA for \eqref{eq_11}. Then  $\{\mathbf{U}^{k}\}$ is convergent to a stationary point of \eqref{eq_11}. Moreover, $\mathbf{\Sigma}_{k=0}^{\infty}\|\mathbf{U}^{k+1}-\mathbf{U}^{k}\|_F<\infty$.
\end{theorem}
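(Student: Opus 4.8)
The plan is to reduce the statement for the full stability-center sequence $\{\mathbf{U}^k\}$ to the subsequence results already in hand, exploiting the fact that under the sieving rule a null step leaves the stability center unchanged ($\mathbf{U}^{k+1}:=\mathbf{U}^k$). First I would split the analysis according to whether finitely or infinitely many serious steps are performed, since these two regimes are governed by Proposition \ref{prop_5} and Theorem \ref{thm:2} respectively.

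If only finitely many serious steps occur and the algorithm does not terminate, then by Proposition \ref{prop_5}(2) there is an index $\hat{k}$ after which every step is a null step, so that $\mathbf{U}^{k+1}=\mathbf{U}^k$ for all $k>\hat{k}$. The sequence $\{\mathbf{U}^k\}$ is therefore eventually constant, equal to the last serious-step center $\mathbf{U}^{\hat{k}+1}$, which Proposition \ref{prop_5}(2) identifies as a stationary point of \eqref{eq_11}. In this case $\{\mathbf{U}^k\}$ converges trivially and $\Sigma_{k=0}^{\infty}\|\mathbf{U}^{k+1}-\mathbf{U}^k\|_F$ collapses to a finite sum, so both conclusions hold at once.

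For the infinite serious-step regime I would invoke the structural description recorded just before Proposition \ref{prop_6}: the full sequence $\{\mathbf{U}^k\}$ is obtained from the serious-step subsequence $\{\mathbf{U}^{k_l}\}$ by repeating each term over the block of null steps that follows it. Consequently the consecutive difference $\|\mathbf{U}^{k+1}-\mathbf{U}^k\|_F$ vanishes on every null step and equals $\|\mathbf{U}^{k_{l+1}}-\mathbf{U}^{k_l}\|_F$ on serious steps, so that
\begin{equation}\nonumber
\Sigma_{k=0}^{\infty}\|\mathbf{U}^{k+1}-\mathbf{U}^k\|_F=\Sigma_{l=0}^{\infty}\|\mathbf{U}^{k_{l+1}}-\mathbf{U}^{k_l}\|_F<\infty,
\end{equation}
where the finiteness is precisely the summability proved in Theorem \ref{thm:2}. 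This establishes the second assertion directly.

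Finally, summability of the consecutive differences makes $\{\mathbf{U}^k\}$ a Cauchy sequence, since for $m>n$ one has $\|\mathbf{U}^m-\mathbf{U}^n\|_F\leq\Sigma_{k=n}^{m-1}\|\mathbf{U}^{k+1}-\mathbf{U}^k\|_F$, and the right-hand side is the tail of a convergent series and hence tends to $0$. As $\mathcal{S}^d$ is finite-dimensional and therefore complete, $\{\mathbf{U}^k\}$ converges, and its limit must coincide with the limit of its subsequence $\{\mathbf{U}^{k_l}\}$, which Theorem \ref{thm:2} shows to be a stationary point of \eqref{eq_11}. I expect the only delicate point to be the bookkeeping that links the two index sets — verifying that null-step differences genuinely drop out of the total-variation sum and that the repeated-term structure forces the whole sequence to inherit the subsequential limit; once this is in place the convergence is an immediate Cauchy argument requiring no new estimate.
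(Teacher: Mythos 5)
Your proposal is correct and follows essentially the same route as the paper: in the infinite serious-step regime the paper likewise observes that $\{\mathbf{U}^{k}\}$ is the serious-step subsequence $\{\mathbf{U}^{k_l}\}$ with finitely repeated terms, so null-step differences vanish, $\Sigma_{k=0}^{\infty}\|\mathbf{U}^{k+1}-\mathbf{U}^{k}\|_F = \Sigma_{l=0}^{\infty}\|\mathbf{U}^{k_{l+1}}-\mathbf{U}^{k_l}\|_F<\infty$, and convergence to a stationary point is inherited from Theorem \ref{thm:2}. Your additional treatment of the finitely-many-serious-steps case is not needed here, since Theorem \ref{thm:3} sits in Section \ref{sec:4_2} under the standing assumption of infinitely many serious steps (the finite case being dispatched by Proposition \ref{prop_5}), but it is consistent with the paper and does no harm.
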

\begin{proof}
From assumption of infinite serious steps in this subsection, we know that $\{\mathbf{U}^{k_{l}}\}$ is just the subsequence of $\{\mathbf{U}^{k}\}$ removing the finite repeated points. From the Teorem \ref{thm:2}, we know that the sequence $\left\{\mathbf{U}^{k}\right\}$ is also convergent to the stationary points of DC problem \eqref{eq_11}. Moreover, we have \[\mathbf{\Sigma}_{k=0}^{\infty}\|\mathbf{U}^{k+1}-\mathbf{U}^{k}\|_F = \mathbf{\Sigma}_{l=0}^{\infty}\|\mathbf{U}^{k_{l+1}}-\mathbf{U}^{k_l}\|_F<\infty.\]  This completes the proof.
\end{proof}
\section{Numerical experiments}\label{sec:5}
In this section, we  perform numerical experiments to show the efficiency of our s-iPDCA for
solving the RCLSSDP \eqref{eq_7}, and then to prove the effectiveness of the RCKSDPP in face recognition.  All experiments are performed in Matlab 2020a on a 64-bit PC with an Intel(R) Xeon(R)  CPU E5-2609 v2 (2.50GHz)(2 processor) and 56GB of RAM.\par
Before we start the experiments, we first scale the model in \eqref{eq_6} as following: let $\tilde{\mathcal{A}}= \frac{\mathcal{A}}{\|\mathbf{A}\|_F}$ and $\tilde{\boldsymbol{b}}= \frac{\boldsymbol{b}}{\|\boldsymbol{b}\|}$, then the scaled RCLSSDP shown as 
 \begin{equation}\label{eq_70}
\begin{array}{ll}
\min_{\tilde{\mathbf{U}}\in \mathcal{S}^d_+} &J(\tilde{\mathbf{U}}) = \|\tilde{\mathcal{A}}\left(\tilde{\mathbf{U}}\right)- \tilde{\boldsymbol{b}}\|^{2}\\
\text { s.t. }&\langle \tilde{\mathbf{U}},\mathbf{I}\rangle-\|\tilde{\mathbf{U}}\|_{(r)} =  0.
\end{array}
\end{equation}
In here, $\tilde{\mathbf{U}} = \frac{\|\mathbf{A}\|_F\mathbf{U}}{\|\boldsymbol{b}\|}$. Then, we use the s-iPDCA or classical PDCA to solve the scaled DC problem from above scaled RCLSSDP. After the scaled optimal solution and optimal value being computed, the optimal solution and optimal value are obtaind by rescaling. For convenience, we omit the '$\sim$' in anywhere else of this paper. Next, we divide our numerical experiments  into two parts, shown in the following. In the first part, we  compare the performance of s-iPDCA with the classical PDCA  and the PDCA with extrapolation (PDCAe) on solving the  RCLSSDP \eqref{eq_7}, where the RCLSSDP come from dimension reduction for the COIL-20 database. In the second part, we apply RCKSDPP on dimension reduction for face recognition on the ORL database and the YaleB database, the results is compared with KSSDPP and KPCA.\par
The  dimension  of the images in some databases is mach larger than the number  of data points, e.g., the ORL database contains 400 face images with  $d = 112\times 92$, cropped YaleB database contains 2414 face images with $d = 168\times 192$ and the cropped COIL-20 database contains 1440 images with $d= 128\times128$. In these situations, we use kernel trick to reduce the size of PSD matrix in \eqref{eq_11} from $d \times d$ to $n\times n$. The kernel extension SDPP (KSDPP) first  maps the data from the original input space to another higher (even infinite) dimensional feature space $\phi: \mathcal{X}\rightarrow \mathcal{H}$.  Then we can rewrite the projection matrix as $\tilde{\mathbf{P}} = \mathbf{\Psi}\mathbf{P}$ with $\mathbf{\Psi} = \left[\phi(\boldsymbol{x}_1),...,\phi(\boldsymbol{x}_n)\right]$. To obtain the KSDPP, we only need to replace the projected matrix $\mathbf{P}$ and  the input covariate $\boldsymbol{x}$ in \eqref{eq_1} as $\tilde{\mathbf{P}}$ and $\phi(\boldsymbol{x})$, respectively. Denote the element of the kernel matrix $\mathbf{K}$ as $\mathbf{K}_{i j}=k\left(\mathbf{x}_{i}, \mathbf{x}_{j}\right)=\left\langle\phi\left(\mathbf{x}_{i}\right), \phi\left(\mathbf{x}_{j}\right)\right\rangle$, then we get the KSDPP
\begin{equation}\label{eq_71}
\begin{array}{ll}
\min & J(\mathbf{P})=\frac{1}{n} \sum_{i=1}^{n} \sum_{\mathbf{K}_{j} \in \mathcal{N}\left(\mathbf{K}_{i}\right)}\left( \|\mathbf{P}^T\mathbf{K}_{i} - \mathbf{P}^T\mathbf{K}_{j}\|^2-\left\|\boldsymbol{y}_{i}-\boldsymbol{y}_{j}\right\|^{2}\right)^{2}. 
\end{array}
\end{equation}
Simlar to the SDPP, the KSDPP also can be equivalently transferred into  rank constrianed KSDPP (RCKSDPP). What's more, we call the kernel extension of SSDPP  as KSSDPP.
In this paper, the Gaussian kernel function $exp(\frac{-\|\boldsymbol{x}-\boldsymbol{y}\|^2}{\varsigma t^2})$ is used in this paper, in here, the $t$ and $\varsigma>0$ are the kernel parameters. The $t$ is set by Silverman rule of thumb, shown as $t  = 1.06n^{-0.2}\sqrt{\frac{\Sigma_{i=1}^n\|\boldsymbol{x}_i-\bar{\boldsymbol{x}}\|^2}{n-1}}$. The parameter $\varsigma$ is set as $\varsigma = 2$ in stadrad Gaussian kernel function, but in this paper, we adjust the $\varsigma$ to achieve better numerical performance. 

\par
In this paper, we use the low precision solution of convex problem \eqref{eq_4} as the initial solution of our s-iPDCA. When a sufficiently small  penalty parameter is chosen, the \eqref{eq_8} can be solved easily with the initial solution from convex problem \eqref{eq_4}. By this observation, we start our s-iPDCA with very small penalty parameter.\par
\subsection{Comparing the performance of s-iPDCA with the classial PDCA and the PDCAe}\label{sec:5_1}
In this subsection, we  compare the performance of our s-iPDCA  with the classical PDCA and the  PDCAe for solving the RCLSSDP \eqref{eq_7}. The RCLSSDP in this experiments comes  from performing DR on COIL-20 database. The projection dimension is set as $r = 5$ and the kernel parameter is set as $\varsigma = 2$. The neighborhood size of k-nearest neighbor (k-nn) for the RCKSDPP is set as $k = \operatorname{round}(log(n))$. The proximal parameters of the s-iPDCA and PDCA are both set as $\alpha = 5\times10^{-6}$.  In order to obtain the same quality of suboptimal objective function value $J$, we set the  termination criteria of these three algorithm as $\eta = \frac{|J(\mathbf{U}^{k+1})-J(\mathbf{U}^{k})|}{1+|J(\mathbf{U}^{k})|}\leq 1\times 10^{-7}$ for the PDCAe, $\eta = \frac{|J(\mathbf{U}^{k+1})-J(\mathbf{U}^{k})|}{1+|J(\mathbf{U}^{k})|}\leq 7\times 10^{-8}$ for the PDCA and $\eta = \frac{|J(\mathbf{V}^{k+1})-J(\mathbf{U}^{k})|}{1+|J(\mathbf{U}^{k})|}\leq 7\times 10^{-8}$ for the s-iPDCA, respectively. Then the total solving time (t/s), the outer iterations (Iter), the relative variation of objective function value ($\eta$) and optimal value of DC problem \eqref{eq_11} ($J$) were compared, the results are listed in Table \ref{tab:1}.\par
\begin{table}
\caption{Performance of  PDCAe, PDCA and s-iPDCA}
\label{tab:1}       
\begin{tabular}{cccccccccccccccc} 
\hline\\[2pt]  
    \centering{n}&\multicolumn{4}{c}{ PDCAe}&\multicolumn{4}{c}{PDCA}&\multicolumn{4}{c}{s-iPDCA}\\[2pt] 
    \cmidrule(lr){2-5} \cmidrule(lr){6-9} \cmidrule(lr){10-13}\\[2pt]
    &$J$&$\eta$&Iter&t/s&$J$&$\eta$&Iter&t/s&$J$&$\eta$&Iter&t/s\\[5pt] 
    \hline \\[2pt]
    100&3.89e-3&3.64e-8&391&2.51&4.06e-4&6.78e-8&153&5.68&2.90e-4&1.03e-9&137&1.74\\[10pt]
    200&7.94e-4&9.94e-8&468&10.09&2.51e-4&6.95e-8&204&15.84&2.42e-4&6.94e-8&200&6.22\\[10pt] 
   400&6.91e-3&9.95e-8&807&75.69&3.80e-3&6.98e-8&332&75.37&3.79e-3&6.96e-8&338&48.70\\[10pt]
   600&1.30e-2&9.95e-8&939&188.28&8.87e-3&6.99e-8&375&205.58&8.87e-3&6.97e-8&378&100.18\\[10pt]
   800&2.16e-2&9.96e-8&980&427.77&1.67e-2&6.99e-8&397&447.39&1.67e-2&6.98e-8&399&216.21\\[10pt]
   1000&2.60e-2&9.96e-8&961&713.44&2.60e-2&6.99e-8&503&739.89&2.58e-2&6.99e-8&506&435.15\\[10pt]
    1200&4.48e-2&9.98e-8&986&1199.66&4.11e-2&6.99e-8&506&1204.47&4.08e-2&6.99e-8&510&666.69\\[10pt]
     1440&4.83e-2&9.99e-8&1083&2128.23&4.45e-2&7.00e-8&530&1484.42&4.45e-2&6.99e-8&530&1057.91\\[5pt]
    \hline   
   \end{tabular}
\end{table}
When we use the PDCAe to solve the DC problem \eqref{eq_11}, the following convex subproblem is considered:
\begin{equation}\label{eq_72}
\min_{\mathbf{U} \in \mathcal{S}^n_+} \langle \frac{2}{n}\mathcal{A}(\mathcal{A}(\tilde{\mathbf{U}}^k)- \boldsymbol{b}), \mathbf{U}\rangle+c\langle \mathbf{U},\mathbf{I}\rangle- \langle \mathbf{U}, \mathbf{W}^{k}\rangle+\frac{L}{2}\|\mathbf{U}-\tilde{\mathbf{U}}^k\|_F,
\end{equation}
in here, $\tilde{\mathbf{U}}^k$ is the extrapolation term, given as $\tilde{\mathbf{U}}^k = \tilde{\mathbf{U}}^k + \beta_k(\mathbf{U}^k- \mathbf{U}^{k-1})$, the extrapolation parameter is set as $t_{k+1} = \frac{1+\sqrt{1+4t_k^2}}{2}$, $\beta_k = \frac{t_k-1}{t_{k+1}}$ with $t_0=1$. The $L$ is positive number greater than  the gradient Lipschitz smooth constant of $J(\mathbf{U})=\frac{1}{n}\|\mathcal{A}(\mathbf{U})-\boldsymbol{b}\|^2$, set as $L \geq \frac{2}{n}\|\mathbf{A}^{\top}\mathbf{A}\|_2$.  The \eqref{eq_72} has closed form solution, shown as
\[\mathbf{U}^{k+1} = \frac{1}{L}\mathbf{\Pi}_{\mathcal{S}^n_+}(L\tilde{\mathbf{U}}^{k}+\mathbf{W}^{k} -c\mathbf{I}-\frac{2}{n}\mathcal{A}^*(\mathcal{A}(\tilde{\mathbf{U}}^k)- \boldsymbol{b})).\]
 The algorithm details for PDCAe can be founded in \cite{Ref_liu2019refined,Ref_wen2018proximal}. In addition, the subproblem of PDCA is solved by the ABCD \ref{alg_abcd} and the termination error is set as $\zeta = 1\times10^{-9}$. When the s-iPDCA is used to solve the RCLSSDP \eqref{eq_11}, we set the initial inexact bounded $\epsilon_1 = 1\times 10^{-4}$. Then, the computation time (t/s), iteration (Iter) and optimal value ($J$) are compared with our s-iPDCA.\par
We perform dimension reduction  experiment on the COIL-20 database,  this data set  is a collection of gray images of 1440 images from 20 objects where each object has 72 different images. The  images of each objects, with uniformly distributed rotation angles, $\left[0^{\circ}, 5^{\circ}, \cdots,355^{\circ}\right]$, has been cropped into size of $128\times 128$. 
In order to compare the efficiency of our s-iPDCA with the PDCA and  PDCAe, we perform DR on the same images group, e.g., the first 10 images of the 20 objects have been chosen when n = 200 and the first 20 images of the all 20 objects have been chosen when n =400, etc. \par
 The results is listed in Table \ref{tab:1}, from which  we  know that our s-iPDCA outperforms the PDCA and the PDCAe  for solving the RCLSSDP \eqref{eq_7}  from  computation time and optimal value. Since only about $2-3$ steps the ABCD  have been performed at each s-iPDCA iteration, although the iteration  of the s-iPDCA is larger than the PDCA,  its total computation time is much less than PDCA. As is shown in Table \ref{tab:1}, the optimal value of s-iPDCA is smaller than that of the other two algorithms. What's more, although the subproblem of PDCAe has closed form solution, it takes the more computation time and iterations than the s-iPDCA to  solve the RCLSSDP for  all situation. The reason is that the proximal parameter $L$ of the PDCAe is chosen as the  gradient Lipschitz smooth constant of $J$ (or larger) to ensure the convergence of the PDCAe, which limits the convergent speed of the PDCAe.
\subsection{Dimension reduction for recognition}\label{sec:5_2}
In this subsection, some numerical experiments for face recogition are performed to show the advantage of our model in practical application. We divide face images database into training set and testing set randomly, then we proceed  the following  steps for face recognition. Firstly, a projected matrix  is obtained by solving the RCLSSDP \eqref{eq_6}. Then, we reduce the dimension of  face images in both training set and testing set  by applying the above projected martrix. Finally, we use the nearest neighbor method as classifier to identify whether the  projected covariate is belong which individual. \par
Two well-known face database ORL and the Extended Yale Face Database B (YaleB) \cite{Ref_georghiades2001few} are used in our experiments. The ORL database contains 400 images from 40 individuals where each individual has 10 different images. The size of the each image is $92\times 112$. For each individual, the face in the images are rotated, scaled or tilted to a mild degree. 
We only extract the subset of YaleB database that containing 2,414 frontal pose images of 38 individuals under different illuminations for ecah individual. We crop all the images from YaleB database  into $168\times 192$ pixels.\par
From each of the face database mentioned above, the image set is partitioned into the
different training and testing sets. We use the $G_p/T_q$ indicates that p face images per
individual are randomly selected for training and q face images from the remaining are used for testing. Next, we will show the advantage of the RCKSDPP by comparing its performance with KSSDPP and KPCA. The KSSDPP is solved by an SDP code, called boundary point method\cite{Ref_povh2006boundary}.\par

 According to our test, when we reduce the demension of  image data in ORL to around 40, the highest recognition accuracy is obtained. Then we set $r = 40$   when we use the RCKSDPP  to reduce the dimension of image data in the face recognition task on the ORL. In the DR experiment on the ORL, the neighborhood size of k-nn for the RCKSDPP is set as $k = \operatorname{round}(log(n))$, the kernel parameter of kernel function is set as $\varsigma =25$, and the termination condition of the s-iPDCA for solving RCKSDPP is set as $\frac{\|\mathbf{V}^{k+1}-\mathbf{U}^{k}\|_F}{1+\|\mathbf{U}^{k}\|_F}\leq 1 \times 10^{-4}$ while the termination precision of boundary points method for solving KSSDPP is set as $1 \times 10^{-5}$. The average recognition accuracy and the standard deviation across 50 runs of tests of KSSDPP, KPCA and RCKSDPP are shown in Table \ref{tab:2}.  From Table \ref{tab:2}, we can know that the RCKSDPP outperforms the KSSDPP and KPCA on the  ORL database for all situations. What's more, the larger the training set is, the higher the recognition accuracy was obtained for each model.\par
\begin{table}
\caption{Dimension reduction results: recognition accuracy  $\left(\text{Re}\pm\text{std}\%\right)$, average optimal value $(J)$ and average solving time $(t/s)$  on the ORL database}
\label{tab:2}       
\begin{tabular}{cccccccccccccc}
\hline\\[2pt]
Partitions&\multicolumn{4}{c}{ KSSDPP}&\multicolumn{4}{c}{RCKSDPP}&\multicolumn{2}{c}{KPCA}\\[2pt]
    \cmidrule(lr){2-5} \cmidrule(lr){6-9} \cmidrule(lr){10-11}\\[2pt]
    &$\text{Re}\pm\text{std}$&$J$&Iter&t/s&$\text{Re}\pm\text{std}$&$J$&Iter&t/s&$\text{Re}\pm\text{std}$&t/s\\[5pt]
\hline\\[2pt]
$G_2/ T_8$ & $80.15\pm3.26$&6.53e-2&1523&19.47& $83.22\pm2.57$&2.22e-4&654&8.11&$79.75\pm2.41$ &0.95\\[10pt] 
$G_3/ T_7$ & $88.11\pm2.56$&9.88e-2&1242&21.67& $90.90\pm 2.16$&1.27e-3&682&17.04&$84.67\pm12.08$ &0.98 \\[10pt]
$G_4/ T_6$ & $92.32\pm1.80$&1.33e-1&1366&38.33 & $94.47\pm 1.70$&2.25e-3&747&22.31&$88.59\pm12.58$ &1.00 \\[10pt]
$G_5/T_5$ & $95.20\pm0.76$&1.245e-1&1840&72.43 & $96.46 \pm 1.56$&4.6e-3&716&43.42 &$93.39\pm2.16$ &1.01\\[10pt]
$G_6/T_4$ & $96.37\pm1.49$&1.47e-1&1751&92.93 & $97.15 \pm  1.66$&5.93e-3&684&50.55&$95.01\pm1.63$ &1.03 \\[10pt]
$G_7/ T_3$ & $97.00\pm0.75$&2.14e-1&1710&158.48 & $98.25 \pm 1.19$&1.15e-2&634&67.38&$96.53\pm1.64$ &1.04 \\[10pt]
$G_8/T_2$ & $98.02\pm1.62$&2.11e-1&1276&160.97 & $98.70 \pm 1.58$&1.41e-2&641 &72.55 &$97.15\pm2.02$ &1.05\\[5pt]
\noalign{\smallskip}\hline
\end{tabular}
\end{table}

Compared to the ORL database, the YaleB  database has different illuminations, which makes it diffcult to perform recognition task on the YaleB database. In the DR experiment on the YaleB, We set $r = 45$, set the neighborhood size of k-nn for the RCKSDPP  as $k = 3$, and in each run of RCKSDPP, KSSDPP and KPDCA,  20 individuals are randomly selected for face recognition. The different suitable kernel parameter is chosen for each of these three models, e.g., $\varsigma =7$ for the KSSDPP and RCKSDPP, while $\varsigma =2000$ for the KPCA. The termination condition of the s-iPDCA for solving RCKSDPP is set as $\frac{\|\mathbf{V}^{k+1}-\mathbf{U}^{k}\|_F}{1+\|\mathbf{U}^{k}\|_F}\leq 7 \times 10^{-5}$ and the termination precision of boundary points method for solving KSSDP is set as $1 \times 10^{-5}$. The comparison results on this database is shown in Table \ref{tab:3}, which shows that the unsupervised method, the KPCA, has much lower recognition accuracy  than the RCKSDPP and KSSDPP on the YaleB face database.  What's more, the RCKSDPP outperforms the KSSDPP, which means that RCKSDPP is more powerful to reduce the dimension of complex data than the KSSDPP.\par
\begin{table}
\caption{Dimension reduction results: recognition accuracy  $\left(\text{Re}\pm\text{std}\%\right)$, average optimal value $(J)$ and average solving time $(t/s)$  on the YaleB database}
\label{tab:3}       
\begin{tabular}{ccccccccccc}
\hline\\[2pt] 
Partitions&\multicolumn{4}{c}{KSSDPP}&\multicolumn{4}{c}{RCKSDPP}&\multicolumn{2}{c}{KPCA}\\[2pt]
    \cmidrule(lr){2-5} \cmidrule(lr){6-9} \cmidrule(lr){10-11} \\[2pt]
    &$\text{Re}\pm\text{std}$&$J$&Iter&t/s&$\text{Re}\pm\text{std}$&$J$&Iter&t/s&$\text{Re}\pm\text{std}$&t/s\\[5pt]
\hline\\[2pt]
$G_{10}/ T_{50}$& $68.24\pm 1.67$&7.13e-2&1918&42.05& $76.28 \pm 3.17$ &3.03e-3&718&40.70&$55.43 \pm 2.57$ &1.52\\[10pt] 
$G_{20}/T_{40}$& $79.98\pm 2.53$&1.30e-1&2093&169.73 & $89.03\pm 1.53$&1.64e-2&575&86.21 &$67.13 \pm 2.63$ &1.81\\[10pt] 
$G_{30}/T_{30}$& $85.70\pm 1.76$&1.52e-1&2212&379.76 & $93.67 \pm  1.14$&2.59e-2&531&150.58&$71.35 \pm 2.65$ &2.15 \\[10pt] 
$G_{40}/ T_{20}$& $87.10\pm 1.76$&1.50e-1&2389&702.58 & $94.00 \pm 1.77$&3.28e-2&459&193.90&$74.33 \pm 3.00$ &2.46 \\[10pt] 
$G_{50}/T_{10}$&$87.50\pm 2.89$ &1.91e-1&2476&1494.51 & $95.05 \pm 1.45$&4.43e-2&426&298.95&$74.57 \pm 3.74$ &2.80\\[5pt] 
\noalign{\smallskip}\hline
\end{tabular}
\end{table}
From Table \ref{tab:2} and Table \ref{tab:3}, we know that the RCKSDPP always outperforms the KSSDPP from computation times and recognition accuracy. This results demonstrate that the RCKSDPP is effective to reduce the dimension of image data. What's more, the larger the training set size is, the higher the recognition accuracy of KSSDPP, RCKSDPP and KPCA are obtained.
\section{Conclusion}\label{sec:6}
 In this paper, the supervised distance preserving projections (SDPP) for dimension reduction has been considered, which is reformulated into the rank constrained least squares semidefinite programming (RCLSSDP). To address the difficulty bringing by the rank constraint, we introduce the DC regularization strategy to transfer the RCLSSDP into LSSDP with DC regularization.  Under the framework of DC approach, we propose an efficient algorithm, the  inexact proximal DC algorithm with sieving strategy (s-iPDCA), for solving the DCLSSDP. Then, we  show that the sequence generated by s-iPDCA globally converges to the stationary points of corresponding DC probelm. In addition, based on the dual of the strongly convex subprobelm of s-iPDCA, an efficient accelerated block coordinate descent (ABCD) method  is designed.  What's more, an efficient inexact strategy is  employed to solve the subprobelm of s-iPDCA.
 Moreover, the low rank structure of solution is considered to reduce the storage cost and computation cost. Finally, we compare our s-iPDCA with the classical PDCA and the PDCA with extrapolation (PDCAe) for solving the  RCLSSDP by performing DR on the COIL-20 database, the results show that the s-iPDCA is  more efficient to solve the RCLSSDP. We also apply the rank constrained kernel  SDPP (RCKSDPP) to perform DR  for face recognition on the ORL and the YaleB database, the resluts demonstrate that the  RCKSDPP outperforms the Kernal SSDPP (KSSDPP) and the kernel principal component analysis (KPCA) on recognition accuracy.

%
%

\bibliographystyle{spmpsci_unsrt}    
\bibliography{ref}   



\end{document}